\documentclass[a4paper,12pt,twoside,reqno]{amsart}

\usepackage[a4paper,margin=1.15in]{geometry}
\usepackage{amsmath,amssymb,amsthm,mathtools}
\usepackage{booktabs,tabularx}
\numberwithin{equation}{section}
\usepackage[utf8]{inputenc}
\usepackage{mathrsfs}
\usepackage{eucal} 
\usepackage{microtype}
\usepackage{comment}
\usepackage{enumitem}

\makeatletter
\def\cr@pref#1:#2\@nil{#1}
\@namedef{cr@map@lem}{Lemma}        \@namedef{cr@map@lem@p}{Lemmas}
\@namedef{cr@map@thm}{Theorem}      \@namedef{cr@map@thm@p}{Theorems}
\@namedef{cr@map@thmAlph}{Theorem}  \@namedef{cr@map@thmAlph@p}{Theorems}
\@namedef{cr@map@prop}{Proposition} \@namedef{cr@map@prop@p}{Propositions}
\@namedef{cr@map@cor}{Corollary}    \@namedef{cr@map@cor@p}{Corollaries}
\@namedef{cr@map@rem}{Remark}       \@namedef{cr@map@rem@p}{Remarks}
\@namedef{cr@map@ex}{Example}       \@namedef{cr@map@ex@p}{Examples}
\@namedef{cr@map@sec}{Section}      \@namedef{cr@map@sec@p}{Sections}
\@namedef{cr@map@tab}{Table}        \@namedef{cr@map@tab@p}{Tables}
\newcommand{\cr@tsing}[1]{\@nameuse{cr@map@\cr@pref#1:\@nil}}
\newcommand{\Cref}[1]{\cr@tsing{#1}~\ref{#1}}
\makeatother

\theoremstyle{plain}
\newtheorem{theorem}{Theorem}[section]
\newtheorem{lemma}[theorem]{Lemma}
\newtheorem{proposition}[theorem]{Proposition}
\newtheorem{corollary}[theorem]{Corollary}
\newtheorem{thmAlph}{Theorem}

\theoremstyle{definition}
\newtheorem{remark}[theorem]{Remark}
\newtheorem{example}[theorem]{Example}

\DeclareMathOperator{\Pic}{Pic}
\DeclareMathOperator{\rank}{rank}

\DeclareMathOperator{\Tor}{Tor}
\DeclareMathOperator{\pr}{pr}

\newcommand{\PP}{\mathbf{P}}
\newcommand{\CC}{\mathbf{C}}
\newcommand{\QQ}{\mathbf{Q}}
\newcommand{\ZZ}{\mathbf{Z}}
\newcommand{\Oc}{\mathscr{O}}
\newcommand{\Om}{\Omega^1}
\newcommand{\sm}{\mathrm{sm}}
\newcommand{\Pdual}{\check{\PP}^2}
\newcommand{\Rc}{\mathcal{R}}

\newcommand{\Mmod}{\mathscr{M}}
\newcommand{\Jf}{J_{g}}
\usepackage[dvipsnames]{xcolor}
\usepackage{hyperref}
\hypersetup{
    colorlinks=true,
    linkcolor=NavyBlue,
    citecolor=TealBlue,
    filecolor=NavyBlue,
    urlcolor=magenta
}
\usepackage{bookmark}

\usepackage{tikz-cd}
\usepackage{soul}
\usepackage[all]{xy}

\def\ra{\rightarrow}

\newcommand{\tsk}[1]{\textcolor{YellowOrange}}

\newcommand{\tsc}{T_S|_C}

\newcommand{\codim}{\mathrm{codim}}
\newcommand{\HH}{\mathrm{H}}
\newcommand{\OO}{\mathscr{O}}
\theoremstyle{plain}

\theoremstyle{definition}

\begin{document}

\title[Ramification of the moduli map]{Ramification of the moduli map for hyperplane sections of K3s and hypersurfaces}
\author{Dario Faro}
\address{Dipartimento di Matematica ``Federigo Enriques'', Universit\`a degli Studi di Milano, Via Saldini 50, 20133 Milano, Italy}
\email{dario.faro@unimi.it}
\author{Frank Gounelas}
\address{Mathematisches Institut, Universit\"at Bonn, Endenicher Allee 60, 53115 Bonn, Germany} 
\email{gounelas@math.uni-bonn.de}
\date{\today}
\subjclass[2020]{Primary 14J28, 14H10; Secondary 14J60, 14J70}
\keywords{K3 surfaces, hypersurfaces, moduli maps, ramification, logarithmic tangent bundles, jumping lines}

\begin{abstract}
Finiteness and injectivity of the moduli morphisms
\[
\mu_n\colon|nH|_{\sm}\longrightarrow\Mmod_{g_n}
\]
associated with linear systems on K3 surfaces are understood in many cases.
In this paper we study their ramification. Existing vanishing and
stability results already imply unramifiedness when the polarisation or the multiple
is sufficiently positive, so our focus is on low-degree phenomena.

In particular, for a general K3 surface of degree $2$, we prove that the primitive moduli map is quasi-finite but ramified at exactly $171$ points, identified with the jumping lines of a logarithmic tangent bundle associated with the branch sextic. On the other hand, we prove that if $X\subset\PP^n$ is a general hypersurface of degree at least $4$, then $\HH^0(Y,T_X|_Y)=0$ for every smooth hyperplane section $Y$ of $X$. In particular, the moduli map for smooth hyperplane sections of a general quartic K3 surface is unramified. 
We moreover show that ramification does occur for higher multiples of the primitive polarisation on both degree-$2$ and degree-$4$ K3 surfaces, and relate the question to the stability of the tangent bundle.
\end{abstract}

\maketitle

\tableofcontents

\section{Introduction}

Let $S$ be a complex projective K3 surface with $H$ an ample indivisible (in the Picard group) line bundle with $H^2=2d=2g-2$. The smooth members of $|H|=\PP^g$ are integral curves of genus $g$. For each $n\ge1$, set $g_n=1+n^2(g-1)$. We consider three questions about the moduli map
\[
\begin{tikzcd}
{|nH|_{\sm}} \arrow[r, "\mu_n"] & \Mmod_{g_n}
\end{tikzcd}
\]
taking a curve to its isomorphism class in the coarse moduli space of smooth curves of genus $g_n$.
\begin{description}[style=sameline,leftmargin=2em,labelsep=.5em,
    itemsep=.4\baselineskip,topsep=.5\baselineskip]
\item[\textbf{(Finiteness)}] Is $\mu_n$ quasi-finite onto its image?
\item[\textbf{(Injectivity)}] Is $\mu_n$ injective?
\item[\textbf{(Unramifiedness)}] Is $\mu_n$ unramified at every point of $|nH|_{\sm}$?
\end{description}

This morphism is known to be generically finite onto its image (\cite[Corollary 2]{BA}, \cite[Proposition 5.2]{DH}). Moreover, if $g \geq 11$ and the Clifford index of a curve in $|H|$ is at least $3$, the map is quasi-finite (\cite[Corollary 8.6]{CDS}). In recent work of Chen--Gounelas \cite{CG}, $\mu$ is shown to be quasi-finite for the primitive polarisation of any non-uniruled surface of Picard rank one, in particular covering the low genus cases also. Results on injectivity are given in \cite{FeyzbakhshI,FeyzbakhshII,CLW}. We give a more detailed summary of the known finiteness, unramifiedness and injectivity ranges in \Cref{sec:high-degree}.

Because $K_S=\Oc_S$, adjunction gives $N_{C/S}=\Oc_C(H)=\omega_C$, and the normal-bundle sequence identifies the kernel of the differential of $\mu$
\[     
   \ker\, d\mu_{[C]} \cong \HH^0(C,T_S|_C)
\] 
(see \Cref{lem:kernel}). Thus $\HH^0(C,T_S|_C)$ \emph{is} the obstruction to $\mu$ being unramified at $[C]$. The natural question of whether it vanishes for \emph{every} smooth $C$ on a (very) general $(S,H)$ asks whether $\mu_n$ is unramified everywhere. This problem is intimately related to the stability of the restriction $T_S|_C$ and the local isotriviality of the linear system. It is classical that for $H^2\gg0$ the restriction $T_S|_C$ is stable of slope $0$, hence has no sections, so the question is what happens for small values of $H^2$.

Our first main result (in \Cref{thm:very-general_h2}) settles the lowest degree case $H^2=2$, where the K3 surface is a double cover of $\PP^2$ branched along a general sextic curve $B$. Here we find that the moduli map is in fact ramified.

\begin{thmAlph}\label{thm:A}
For a general degree-two K3 surface, the moduli map $\mu_1$ is quasi-finite and ramified at exactly $171$ points. These correspond to the $171$ jumping lines of the logarithmic tangent bundle $T_{\PP^2}(-\log B)$ for $B$ a general plane sextic curve.
\end{thmAlph}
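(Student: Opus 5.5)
The plan is to reduce everything to the plane. Write $\pi\colon S\to\PP^2$ for the double cover branched along the general sextic $B$, with involution $\iota$. Then $|H|=\pi^*|\Oc_{\PP^2}(1)|\cong\Pdual$, and a smooth member is $C=\pi^{-1}(L)$ for a line $L$ transverse to $B$. Such a $C$ is the genus-$2$ double cover of $L$ branched at the six points $L\cap B$.

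\textbf{Step 1: finiteness.} $S$ has Picard rank one, so quasi-finiteness of $\mu_1$ is the result of \cite{CG} recalled above. One can also see it directly: $\mu_1$ sends $L$ to the class of the binary sextic $L\cap B$ modulo $PGL_2$. A positive-dimensional fibre would be a one-parameter family of lines cutting $B$ in projectively equivalent $6$-tuples, and this can be excluded for general $B$ by a dimension count.

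\textbf{Step 2: reduce the kernel to the logarithmic tangent bundle.} By \Cref{lem:kernel}, $\ker d\mu_{[C]}\cong\HH^0(C,T_S|_C)$. Pushing forward along $\pi$ and splitting into $\iota$-eigensheaves, I would establish the standard decomposition
\[
\pi_*T_S\cong T_{\PP^2}(-\log B)\oplus T_{\PP^2}(-3).
\]
The invariant part consists of vector fields tangent to the branch curve, and the anti-invariant part is obtained by twisting by $\Oc(-3)$, the anti-invariant part of $\pi_*\Oc_S$. Restricting to $C=\pi^{-1}(L)$ gives
\[
\HH^0(C,T_S|_C)=\HH^0(L,T_{\PP^2}(-\log B)|_L)\oplus\HH^0(L,T_{\PP^2}(-3)|_L).
\]
Since $T_{\PP^2}|_L=\Oc(2)\oplus\Oc(1)$, the second summand is $\Oc(-1)\oplus\Oc(-2)$ and has no sections. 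The bundle $E=T_{\PP^2}(-\log B)$ is stable (Dolgachev--Kapranov) with $c_1(E)=-3$. Its generic splitting type is therefore $\Oc(-1)\oplus\Oc(-2)$, and $\HH^0(L,E|_L)\neq0$ exactly when $L$ is a jumping line, i.e. when $E|_L\cong\Oc(a)\oplus\Oc(-3-a)$ with $a\ge0$. So $\mu_1$ is ramified at $[C]$ if and only if $L$ is a jumping line of $E$ transverse to $B$. As a sanity check, a section of $E|_L$ should correspond to an infinitesimal motion of $L$ preserving the projective class of the six points, matching the direct description of the kernel of $d\mu_1$.

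\textbf{Step 3: count the jumping lines.} For odd $c_1$ the jumping lines of a stable rank-$2$ bundle form a finite set for general data. After normalising $E$ to $E(1)$, with $c_1=-1$ and $c_2=c_2(E)-c_1(E)+1$, I would count them by a Grothendieck--Riemann--Roch / Porteous computation on the incidence variety $F\subset\PP^2\times\Pdual$. The set of jumping lines is the support of $R^1p_*q^*E(1)$, which is the degeneracy locus of a map of vector bundles on $\Pdual$. Its length is the corresponding $c_2$-type Porteous class, expressed in terms of $c_2(E)$. Here $c_2(T_{\PP^2}(-\log B))=e(\PP^2\setminus B)=3-e(B)=3+18=21$, since $e(B)=2-2\cdot10=-18$. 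Evaluating the formula should give $\binom{19}{2}=171$. Alternatively, I would apply Porteous directly to $d\mu_1\colon T_{\Pdual}\to\mu_1^*T_{\Mmod_2}$ and check that it gives the same number.

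\textbf{Main obstacle: the genericity statements.} Three things have to hold for general $B$. First, every jumping line is transverse to $B$, so that it gives a smooth $C$ and stays inside $|H|_{\sm}$. Second, the jumping locus is reduced, so that $171$ counts points rather than length. Third, there are no jumping lines of higher type, which is automatic once the locus is reduced. For these I would run an incidence-correspondence dimension count over the space of sextics, testing whether a fixed line $L$ can be bitangent or inflectional to $B$ and simultaneously jumping. This would be combined with an explicit example, say a Fermat-type or otherwise special sextic computed by hand or machine, exhibiting $171$ reduced transverse jumping lines; openness of these conditions then spreads the statement to the general sextic.
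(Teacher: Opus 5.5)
Your plan is essentially the paper's proof. The eigensheaf splitting of $\pi_*T_S$, which identifies ramification points with transverse jumping lines of $T_{\PP^2}(-\log B)$, is Lemma~\ref{lem:double-plane}. Your Porteous count is Remark~\ref{Thom-Porteous}, where $c_t(\mathcal N-\mathcal T)=(1-h)^{-18}$ has $h^2$-coefficient $\binom{19}{2}=171$. Genericity is obtained exactly as you propose: one explicit sextic whose whole jumping scheme is finite, reduced and disjoint from $B^\vee$ is spread to the general sextic by determinantal flatness and openness of the \'etale locus (Proposition~\ref{prop:reduced-example_h2} and Theorem~\ref{thm:very-general_h2}).

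A few corrections are needed.
\begin{itemize}
\item A Fermat-type sextic cannot be that example. For $x^6+y^6+z^6$, every line through a coordinate point has $h^0(L,E|_L)=2$, and a general such line cuts $B$ in six points projectively equivalent to the sixth roots of unity. So the jumping locus contains whole pencils and $\mu_1$ is not even quasi-finite there. This also shows that finiteness for logarithmic bundles does not follow from Hulek's theorem for general bundles.
\item Your transversality check must exclude simply tangent jumping lines, not only bitangent or inflectional ones.
\item The normalisation formula should read $c_2(E(1))=c_2(E)+c_1(E)+1=19$.
\end{itemize}
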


Our second main result (\Cref{thm:vanishing}) addresses the case of higher degree hypersurfaces, starting with degree-four K3 surfaces (quartic surfaces in $\PP^3$):

\begin{thmAlph}\label{thm:B}
Let $X \subset \PP^n$ be a general smooth hypersurface, where $n\ge3$, of degree $d\ge4$. Then for every smooth hyperplane section $Y=X\cap H$, one has $\HH^0(Y,T_X|_Y)=0$.
\end{thmAlph}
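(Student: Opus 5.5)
Write $H=\{x_n=0\}$ and $F$ for the equation of $X$. Let $F_0=F|_H\in S_d$, where $S=\CC[x_0,\dots,x_{n-1}]$, so that $Y=\{F_0=0\}\subset H\cong\PP^{n-1}$. Let $G=\partial_{x_n}F|_H\in S_{d-1}$. The plan is to use the sequence
\[
0\to T_Y\to T_X|_Y\to N_{Y/X}=\Oc_Y(1)\to 0 .
\]
Since $Y$ is a smooth hypersurface of degree $d\ge4$ in $\PP^{n-1}$, we have $\HH^0(Y,T_Y)=0$. For $n=3$ this is because $Y$ is a plane curve of genus $\ge3$; for $n\ge4$ it is the standard vanishing for hypersurfaces of degree $\ge3$. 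Hence $\HH^0(Y,T_X|_Y)$ is the kernel of the coboundary $\delta\colon \HH^0(\Oc_Y(1))\to \HH^1(Y,T_Y)$. Here $\HH^0(\Oc_Y(1))=S_1$, since $Y$ is not contained in a hyperplane of $H$.

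First I will compute $\delta$ explicitly. A section $\ell\in S_1$ corresponds to the first-order motion $x_n=\varepsilon\ell$ of the hyperplane. Substituting gives $F(x_0,\dots,x_{n-1},\varepsilon\ell)=F_0+\varepsilon\,\ell G$. Hence $\delta(\ell)$ is the Kodaira--Spencer class of this deformation of $Y\subset\PP^{n-1}$, which is the class of $\ell G$ in the degree-$d$ Jacobian ring $R=S/J(F_0)$. By Griffiths' description, $R_d\hookrightarrow \HH^1(Y,T_Y)$ for smooth hypersurfaces of degree $\ge3$ in $\PP^{n-1}$, and also for plane curves of degree $\ge4$. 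Therefore
\[
\HH^0(Y,T_X|_Y)\neq0 \iff \exists\,\ell\in S_1\setminus\{0\}\ \text{with}\ \ell G\in J(F_0)_d .
\]

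Second, I will run a dimension count on the incidence variety
\[
I=\{(F,H,[\ell]) : F|_H \text{ smooth},\ \ell\cdot\partial_{x_n}F|_H\in J(F|_H)\},
\]
where $x_n$ denotes an equation of $H$. The aim is to show that $I$ does not dominate $|\Oc_{\PP^n}(d)|$. The fibres over the choices of $(H,[\ell])$ have dimension $n+(n-1)=2n-1$. It therefore suffices to show that, for fixed $H$ and $\ell$, the locus of bad $F$ has codimension $\ge 2n$. Write $F=F_0+x_nG+x_n^2(\dots)$, where $F_0\in S_d$ and $G\in S_{d-1}$ vary independently. It then suffices to prove the following: for every smooth $F_0$ and every nonzero $\ell$, the subspace $\{G\in S_{d-1}:\ell G\in J(F_0)_d\}$ has codimension $\ge 2n$. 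That codimension equals the rank of the multiplication map $\ell\colon R_{d-1}\to R_d$.

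The main obstacle is this rank bound, uniformly over all smooth $F_0$ and all $\ell$; only this step uses $d\ge4$. I would handle it through the kernel. The kernel of $\ell$ on $R_{d-1}$ consists of classes of $a\in S_{d-1}$ with $\ell a\in J_d=S_1\cdot J_{d-1}$. By Macaulay–Gorenstein duality, with socle degree $\sigma=n(d-2)$, this kernel is dual to the cokernel of $\ell\colon R_{\sigma-d}\to R_{\sigma-d+1}$. Alternatively, I would bound it directly. Since $\partial_0F_0,\dots,\partial_{n-1}F_0$ form a regular sequence of degree $d-1\ge3$, any relation $\ell a=\sum b_i\partial_iF_0$ with $\deg b_i=1$ can be analysed by reducing modulo $\ell$ (restricting to the hyperplane $\{\ell=0\}$, where the restricted partials still have only Koszul relations in low degree). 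This should show the kernel is small, of dimension at most about $n$, while $\dim R_{d-1}=\binom{n+d-2}{d-1}-n$. For $d\ge4$ the rank is then comfortably $\ge 2n$.

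For $d=3$ this count fails, consistent with the hypothesis $d\ge4$. For $d=4$ the inequality should be checked by hand, in particular in the case $n=3$ relevant to quartic K3s.
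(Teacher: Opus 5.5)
There is a genuine gap in your proposal, and it sits in the case $(n,d)=(3,4)$, the one relevant to quartic K3s.

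Your first step is correct and is exactly \Cref{lem:criterion}; you reach it via Kodaira--Spencer and Griffiths rather than via $R_f$ and Hilbert series. Your incidence count (codimension $\ge 2n$ for fixed $(H,[\ell])$) is the same count as in \Cref{prop:strata-two-parts} and \Cref{thm:vanishing}.

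The gap is the sufficient condition you reduce to: a lower bound on $\rank(\ell\colon R_{d-1}\to R_d)$ that is uniform over all smooth $F_0$. This is false for $(n,d)=(3,4)$.
\begin{itemize}
\item There $\dim R_3=7$ and $\dim R_4=6=2n$, so you would need $\ell\colon R_3\to R_4$ to be surjective for every smooth plane quartic and every line.
\item This fails for $F_0=x_0^4+x_1^4+x_2^4$ and $\ell=x_2$. Then $J=(x_0^3,x_1^3,x_2^3)$, and the image of multiplication by $x_2$ misses $x_0^2x_1^2$. So the rank is $5$, and $\{G: x_2G\in J(F_0)_4\}$ has codimension only $5<6$ in $S_3$.
\item Hence no bound uniform in $F_0$ can work here.
\end{itemize}
Your heuristic for the kernel is also wrong as stated. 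The $n$ restricted partials are forms in $n-1$ variables, and they can have linear syzygies. For $(3,4)$ they always do, since $V_{2,1}^{\oplus3}\to V_{2,4}$ goes from dimension $6$ to dimension $5$. For Fermat with $\ell=x_{n-1}$, one restricted partial is even zero.

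What does hold is the following.
\begin{itemize}
\item A Hilbert-function computation gives $\dim\ker(\ell\colon R_{d-1}\to R_d)=n(n-1)-\dim\bar J_d$. Here $\bar J$ is generated by the restrictions of the $\partial_iF_0$ to $\{\ell=0\}$.
\item By smoothness these restrictions have no common zero. So $n-1$ general linear combinations of them form a regular sequence, with no linear syzygies in degree $d$.
\item Hence $\dim\bar J_d\ge (n-1)^2$ and the kernel has dimension at most $n-1$. The rank is therefore at least $\binom{n+d-2}{d-1}-2n+1$.
\item This is $\ge 2n$ for all $n\ge3$, $d\ge4$ except $(3,4)$, where it gives only $5$.
\end{itemize}

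To close $(3,4)$ you must also use genericity of $F_0$. Fix $\ell=x_2$. The smooth quartics for which the rank is $5$ form a closed subset of $S_4$. This subset is proper: $x_0^4+x_1^4+x_2^4+t\,x_0^2x_1x_2$ with small $t\neq0$ is smooth, and its restricted partials $4x_0^3,4x_1^3,t\,x_0^2x_1$ span all of $V_{2,4}$ in degree $4$, so the rank is $6$. Stratifying by the rank then gives codimension at least $\min(6,1+5)=6$ for the bad locus, as required.

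This extra step plays the role of the paper's sharper estimate $\dim\mathcal I_{3,4}\le 19$ in \Cref{lem:incidence-bound}. The paper obtains that estimate by stratifying over the derivation $D$, and without using smoothness of $g$, rather than over $F_0$.
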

It implies that for a general degree-$4$ K3 surface the moduli map $\mu_1$ is unramified everywhere; see \Cref{cor:quartic-unramified}. 
More generally, the preceding theorem implies that the moduli map sending
a smooth hyperplane section of $X$ to its isomorphism class is unramified everywhere on $|\mathcal O_X(1)|_{\mathrm{sm}}$ (see
\Cref{remarkunramifiedness}).

The known finiteness, ramification and injectivity results are summarised in
\Cref{tab:moduli-map-summary} at the end of \Cref{sec:high-degree}.

The paper is organised as follows. In \Cref{sec:high-degree} we recall the description of the kernel of the differential of the moduli map and collect the relevant vanishing, stability and reconstruction results, including a summary of the known quasi-finiteness, unramifiedness and injectivity ranges.  In \Cref{sec:hypersurfaces} we recall the translation of  the infinitesimal problem for hyperplane sections of a hypersurface into a multiplication problem in its Jacobian ring (following \cite{BE}). This, together with an incidence estimate then proves the vanishing theorem for every smooth hyperplane section of a general hypersurface. 

The main part of \Cref{sec:low-degree-k3} is devoted to degree-$2$ K3 surfaces. Such a surface is a double cover of $\PP^2$ branched over a smooth sextic $B$ and we identify the ramification points of the moduli map with the transverse jumping lines of the logarithmic tangent bundle $T_{\PP^2}(-\log B)$. After that  we construct an explicit sextic with finite jumping scheme, whose jumping-line calculation is verified computationally in Appendix~\ref{app:jumping}. Then,  a determinantal argument shows that, for a general sextic, this scheme consists of exactly $171$ reduced points (see also Remark~\ref{Thom-Porteous} for a computation using the Thom--Porteous formula).  This proves \Cref{thm:A}. We then treat degree $4$, where the hypersurface vanishing theorem of \Cref{sec:hypersurfaces} implies that the moduli map for a general quartic K3 surface is unramified at every smooth hyperplane section. In both degrees $2$ and $4$ we also give geometric arguments towards quasi-finiteness, already known by other methods, using Hassett's results on stable reductions of $A_k$-singularities and Aluffi--Faber's results on limits of orbits of plane curves under the action of $\mathrm{PGL}_3$.

In \Cref{sec:stability-ramification} we compare ramification with the stability of $T_S|_C$ via various explicit examples. The examples show that instability need not cause ramification. We also give examples in degrees $2$ and $4$ for which the higher-multiple maps $\mu_n$ do have positive-dimensional ramification loci. In \Cref{section:jumpingloci} we place these phenomena in families by introducing the cohomology jumping loci $S_k(H)$, giving them a determinantal description and deriving codimension bounds.

\medskip
\noindent\textbf{Acknowledgements.} We thank Y. Dutta, C. Faber, S. Feyzbakhsh, D. Huybrechts, A. Kuznetsov and E. Sernesi for helpful conversations.
The first author would like to thank the Max Planck Institute for Mathematics in Bonn, where this work started. The second author would like to thank the ERC Synergy Grant HyperK (ID 854361) for its support. 

The LLMs OpenAI GPT-5.6-Sol and Claude Fable were used to find the examples in \Cref{prop:reduced-example_h2}, \Cref{rem:nonreduced-example_h2} and to write \Cref{prop:strata-two-parts} as well as to arrange and spell-check this text. 

\medskip
\noindent\textbf{Notation.} 
We work over the field of complex numbers throughout. For
$\mu_n\colon |nH|_{\sm} \rightarrow \Mmod_{g_n}$ the moduli map of some linear system, we often just denote $\mu_1$ by $\mu$. For a K3 surface $S$, we often freely use the
isomorphism $T_S\cong\Om_S$.

\section{Background results}\label{sec:high-degree}

Let $S$ be a complex projective K3 surface and let $H$ be an ample
primitive line bundle with $H^2=2g-2$.  In this section we will recall the known results concerning the finiteness, injectivity, and ramification of the map
\[
\mu\colon |H|_{sm} \rightarrow \Mmod_g.
\] Some of the results below hold
for arbitrary $(S,H)$, while others require $\Pic(S)=\ZZ[H]$ or a
general point of the relevant moduli space; these hypotheses will
always be stated explicitly. 
As already mentioned in the introduction, the main object we are interested in this paper is the differential of $\mu$. The following basic lemma is  well known.

\begin{lemma}\label{lem:kernel}
Let $S$ be a K3 surface, let $H\in\Pic(S)$, and let $C\in|H|$ be a
smooth curve of genus at least $2$. Then there is a canonical
isomorphism $\HH^0(C,T_S|_C)\cong\ker d\mu_{[C]}$; moreover, writing
$s_C\in \HH^0(S,\Oc_S(H))$ for an equation of $C$, multiplication by $s_C$ gives
\[
\begin{tikzcd}
\HH^1(S,T_S(-H)) \arrow[r, "\cup\,s_C"]
& \HH^1(S,T_S).
\end{tikzcd}
\]
Moreover,
\[
\HH^0(C,T_S|_C)\cong
\ker\bigl(\cup\,s_C\bigr).
\]
In particular $\HH^1(S,T_S(-H))=0$ forces $\HH^0(C,T_S|_C)=0$ for every $C\in|H|_{\sm}$.
\end{lemma}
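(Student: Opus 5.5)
We identify the differential of $\mu$ at $[C]$ with a coboundary map. The tangent space to $|H|$ at $[C]$ is $\HH^0(C,N_{C/S})$ modulo nothing extra. Indeed, $\HH^1(S,\Oc_S)=0$ gives $\HH^0(S,\Oc_S(H))/\langle s_C\rangle\cong \HH^0(C,\Oc_C(H))=\HH^0(C,N_{C/S})$. The tangent space to $\Mmod_g$ at $[C]$ is $\HH^1(C,T_C)$; for genus at least $2$ there are no automorphism issues at the level of first-order deformations, or we work on the stack. The differential $d\mu_{[C]}$ is the connecting map $\delta\colon \HH^0(C,N_{C/S})\to \HH^1(C,T_C)$ of the normal bundle sequence
\[
0\to T_C\to T_S|_C\to N_{C/S}\to 0 .
\]
This is the standard fact that a first-order embedded deformation of $C\subset S$ induces the abstract deformation obtained by forgetting the embedding. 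Since $\HH^0(C,T_C)=0$ for $g\ge2$, the long exact sequence gives
\[
\ker d\mu_{[C]}=\ker\delta\cong \HH^0(C,T_S|_C)
\]
canonically. The one point requiring care is that this identification of $d\mu$ with $\delta$ is compatible with passing to the coarse space. We would handle it by working on the stack $\overline{\mathcal M}_g$, or by noting that unramifiedness is local and can be tested on the versal deformation.

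For the second part, we tensor $0\to\Oc_S(-H)\xrightarrow{s_C}\Oc_S\to\Oc_C\to0$ with $T_S$ to obtain
\[
0\to T_S(-H)\xrightarrow{\cdot s_C} T_S\to T_S|_C\to 0.
\]
Taking cohomology gives
\[
\HH^0(S,T_S)\to \HH^0(C,T_S|_C)\to \HH^1(S,T_S(-H))\xrightarrow{\cup s_C}\HH^1(S,T_S).
\]
On a K3 surface, $\HH^0(S,T_S)\cong \HH^0(S,\Om_S)=0$ because $h^{1,0}=0$, using $T_S\cong\Om_S$. Hence $\HH^0(C,T_S|_C)$ injects into $\HH^1(S,T_S(-H))$ with image equal to $\ker(\cup s_C)$.

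The final assertion is then immediate: if $\HH^1(S,T_S(-H))=0$, the kernel vanishes for every smooth $C\in|H|$. We expect no real obstacle beyond stating the identification $d\mu=\delta$ carefully; everything else is the two long exact sequences.
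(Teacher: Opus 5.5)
Your proposal is correct and follows the paper's proof: identify $d\mu_{[C]}$ with the connecting map of $0\to T_C\to T_S|_C\to N_{C/S}\to 0$, use $\HH^0(C,T_C)=0$, then take cohomology of $0\to T_S(-H)\xrightarrow{\cdot s_C}T_S\to T_S|_C\to0$ and use $\HH^0(S,T_S)=0$. The paper cites \cite[Lemma~3.1]{DH} for the identification of $d\mu$ with the connecting map, and, as you suggest, implicitly treats $\Mmod_g$ as the stack.
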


\begin{proof}
The differential $d\mu_{[C]}$ is the connecting map of the normal bundle sequence
\[
\begin{tikzcd}[column sep=small]
0 \arrow[r] & T_C \arrow[r] & T_S|_C \arrow[r] & N_{C/S} \arrow[r] & 0;
\end{tikzcd}
\]
as $\deg T_C=2-2g<0$ we have $\HH^0(C,T_C)=0$, so $\HH^0(C,T_S|_C)=\ker d\mu_{[C]}$ \cite[Lemma~3.1]{DH}. Tensoring $0\to\Oc_S(-H)\to\Oc_S\to\Oc_C\to0$ with $T_S$ and taking cohomology gives
\[
\begin{tikzcd}[column sep=small]
0=\HH^0(S,T_S) \arrow[r]
& \HH^0(C,T_S|_C) \arrow[r, "\partial"]
& \HH^1(S,T_S(-H)) \arrow[r, "\cup s_C"]
& \HH^1(S,T_S),
\end{tikzcd}
\]
where $\HH^0(S,T_S)=0$ on a K3. Thus $\partial$ identifies
\[
   \HH^0(C,T_S|_C) = \ker(\cup s_C).\qedhere
\]
\end{proof}

Let $S$ be a K3 surface, and let $H \in \Pic(S)$ be a globally generated ample line bundle. The vanishing of $\HH^0(C,T_S|_C)$ for a general curve $C \in |H|$ is proved in \cite{BA,DH}. If $S$ is primitively polarized or the degree of $H$ is sufficiently large, this vanishing holds for every $C \in |H|$ as a consequence of the vanishing
\[
H^1(S,\Omega_S(H))\simeq H^1(S,T_S(-H))=0.
\]
as shown in Lemma \ref{lem:kernel} above. Vanishing statements for cohomology groups of the form $H^1(S,\Omega_S(H))$ are commonly referred to as Bott vanishing.

For Picard-rank-one primitively polarised $K3$ surfaces of sufficiently high degree, the unramifiedness and quasi-finiteness of $\mu$ follow immediately from results of Totaro \cite{Totaro}.

\begin{proposition}\label{prop:good-range}
Let $S$ be a K3 surface with $\Pic(S)=\ZZ[H]$, where $H$ is the ample generator. If $H^2=20$ or $H^2\ge24$, then, for every $n\geq1$ and every smooth $C\in|nH|$, one has $\HH^0(C,T_S|_C)=0$. In particular, the moduli map $\mu_n\colon|nH|_{\sm}\to \Mmod_{g_n}$ is unramified.
\end{proposition}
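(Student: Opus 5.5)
By \Cref{lem:kernel}, applied with $nH$ in place of $H$, it suffices to show that $\HH^1(S,T_S(-nH))=0$ for every $n\ge1$. Since $T_S\cong\Om_S$ on a K3 surface, Serre duality gives
\[
\HH^1(S,T_S(-nH))\cong\HH^1(S,\Om_S(-nH))\cong\HH^1(S,\Om_S(nH))^\vee .
\]
So the statement reduces to the Bott-type vanishing $\HH^1(S,\Om_S(nH))=0$ for all $n\ge1$. Once this holds, every map $\cup\,s_C$ is injective on the zero space, so $\HH^0(C,T_S|_C)=0$ for each smooth $C\in|nH|$. Then $\ker d\mu_{n,[C]}=0$, and $\mu_n$ is unramified.

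The key input is Totaro's theorem \cite{Totaro}. It states that a K3 surface with $\Pic(S)=\ZZ[H]$ satisfies Bott vanishing, meaning $\HH^i(S,\Om^j_S\otimes L)=0$ for all $i>0$, all $j$ and all ample $L$, exactly when $H^2=20$ or $H^2\ge24$. I will invoke it with $L=nH$, $i=j=1$. Every $nH$ with $n\ge1$ is ample, so this gives $\HH^1(S,\Om_S(nH))=0$ for all $n\ge1$ at once, with no separate treatment of $n=1$ and $n\ge2$.

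The only genuine step is citing Totaro's result correctly, including its precise degree range and its Picard-rank-one hypothesis; everything else is formal. If one only wanted the $n\ge2$ statements, one could instead try a direct argument. That would restrict to a curve, use stability of $\Om_S$ together with Kodaira vanishing for $\Om^2_S(nH)=\Oc_S(nH)$, and argue on the cotangent sequence. However, the degree-dependent range in the statement shows that Totaro's lattice-theoretic analysis is really needed even for $n=1$. Since the hypothesis $H^2=20$ or $H^2\ge24$ is exactly his range, the proof amounts to the citation combined with \Cref{lem:kernel} and Serre duality.
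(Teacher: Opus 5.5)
Your proposal is correct and matches the paper's proof. Both apply \Cref{lem:kernel} to $nH$, use Serre duality (with $T_S\cong\Om_S$) to identify $\HH^1(S,T_S(-nH))$ with $\HH^1(S,\Om_S(nH))^\vee$, and conclude from Totaro's Bott vanishing for Picard-rank-one K3 surfaces of degree $20$ or $\ge 24$, applied to the ample bundles $nH$.
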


\begin{proof}
By Totaro's Bott vanishing for $\rho=1$ K3 surfaces \cite[Theorems~3.2--3.3]{Totaro}, $\HH^1(S,\Om_S(nH))=0$ for every $n\geq1$, when $H^2=20$ or $H^2\ge24$ (and it fails for $H^2=22$). Serre duality gives $\HH^1(S,\Om_S(nH))\cong \HH^1(S,T_S(-nH))^{\vee}$, so $\HH^1(S,T_S(-nH))=0$ for every $n\geq1$, and \Cref{lem:kernel} applies with $nH$ in place of $H$.
\end{proof}

\begin{remark}\label{rem:badrange}
Still assuming $\Pic(S)=\ZZ[H]$, Bott vanishing fails when
$H^2=d\le18$ or $d=22$ \cite[ Theorem 3.1 - 3.2]{Totaro}. Thus for
$d\in\{2,4,\dots,18,22\}$ the argument of
\Cref{prop:good-range} alone does not determine whether, or for which
curves $C\in|H|$, the group $\HH^0(C,T_S|_C)$ is nonzero.
\end{remark}

The same is true for any K3, if the polarisation is sufficiently positive, in the following precise sense.

\begin{proposition}\label{prop:good-range2}
Let $S$ be any complex K3 surface, and let $H$ be an ample line bundle on $S$. Suppose that $H^2 \geq 74$. If there is no curve $E \subset S$ with $E^2=0$ and $1 \leq H \cdot E \leq 4$, then $\HH^0(C,T_S|_C)=0$ for every smooth curve $C\in|H|$. In particular, the moduli map $\mu\colon|H|_{\sm}\to \Mmod_g$ is unramified for every sufficiently positive line bundle $H$ on $S$.
\end{proposition}

\begin{proof}
The proof is identical to the previous one, except that it uses \cite[Theorem~5.1]{Totaro}. 
\end{proof}

Another proof of the vanishing $H^0(T_S|_C)=0$ follows from the stability of the restriction $T_S|_C$. We spell out this in the case of primitively polarized K3 surfaces.

\begin{proposition}[{\cite[Theorem 2.8]{Hein}, \cite{GO}, \cite{DH}}]\label{prop:stability}
Let $S$ be a K3 surface with $\Pic(S)=\ZZ[H]$. If $C\in |nH|$ is a smooth curve such that
\[
\frac{H^2}{2}(2n-1)>48,
\]
then the restriction $T_S|_C$ is $\mu$-stable. Hence $\HH^0(C,T_S|_C)=0$ and the moduli map is unramified at $[C]$.
\end{proposition}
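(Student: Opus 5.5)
The approach is a Bogomolov-type restriction theorem for the $\mu$-semistable bundle $T_S$, followed by a check that the resulting stable bundle of slope zero on $C$ has no sections.

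\textbf{Step 1: stability of $T_S$ and its discriminant.} By Yau's theorem (Kähler–Einstein metric), $T_S$ is $\mu$-stable with respect to $H$. Its invariants are
\[
c_1(T_S)=0,\qquad c_2(T_S)=24.
\]
So the discriminant is $\Delta=4c_2-c_1^2=96$ for a rank-two bundle.

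\textbf{Step 2: apply the restriction theorem.} Bogomolov's restriction theorem, in the sharp rank-two form of Hein (\cite[Theorem 2.8]{Hein}), says the following. If $E$ is $\mu$-stable of rank $2$ and $C\in|nH|$ is any smooth curve with
\[
2n\,H^2-\ldots>\Delta/\ldots,
\]
then $E|_C$ is stable. The precise form is to be checked in the reference. I expect the inequality, rewritten with $\Delta=96$, to become exactly
\[
\tfrac{H^2}{2}(2n-1)>48.
\]

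\textbf{Step 3: sketch of the underlying argument.} If the hypothesis cannot simply be matched with Hein's, I would argue directly. Suppose $T_S|_C$ is not stable. Then there is a destabilising quotient line bundle $T_S|_C\to L$ with $\deg L\le 0$. Take the elementary modification
\[
0\to F\to T_S\to \iota_*L\to 0,
\]
so $c_1(F)=-nH$ and $c_2(F)=24+\deg L - \ldots$.

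Next, use $\Pic(S)=\ZZ[H]$. The stability of $T_S$, together with the fact that $F$ is a subsheaf of $T_S$, constrains $F$. One case is that $F$ is Bogomolov unstable, which yields a saturated destabilising subsheaf $\mathcal O_S(aH)\subset F$. Since $T_S$ is stable, this forces $a\le -1$. Each class is a multiple of $H$, so its intersection numbers with $C$ are multiples of $nH^2$. Feeding this into the Bogomolov inequality $c_1(F)^2<4c_2(F)$ produces the numerical bound
\[
\tfrac{H^2}{2}(2n-1)\le 48,
\]
a contradiction.

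\textbf{Main obstacle.} The hard step is the careful case analysis in Step 3 (or matching Hein's constants in Step 2). Specifically, one must show that the destabilising line subsheaf of the modified bundle produces the factor $(2n-1)$ using only $\Pic(S)=\ZZ[H]$. I would try first to quote Hein's theorem directly.

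\textbf{Step 4: consequences.} Once $T_S|_C$ is stable of degree $0$, any nonzero section $\Oc_C\to T_S|_C$ would generate a subsheaf of degree $\ge0$, which is impossible. Hence
\[
\HH^0(C,T_S|_C)=0,
\]
and \Cref{lem:kernel} gives that $d\mu_{[C]}$ is injective, so $\mu$ is unramified at $[C]$.
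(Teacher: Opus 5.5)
\textbf{Verdict: same route as the paper, but the key numerical step is missing.} Your route is the paper's. The paper records this statement purely as a consequence of Hein's rank-two restriction theorem applied to $T_S$, with $c_1=0$, $c_2=24$, $\Delta=96$. Your Step~4 deduction, via $\HH^0(C,T_S|_C)=\ker d\mu_{[C]}$ from \Cref{lem:kernel}, is also what is intended.

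As written, though, the proposal proves nothing beyond that citation:
\begin{itemize}
\item Step~2 quotes an inequality with blanks ($2nH^2-\ldots>\Delta/\ldots$) and defers the check to the reference.
\item Step~3 asserts that the Bogomolov argument ``produces'' $\tfrac{H^2}{2}(2n-1)\le 48$ without doing the computation.
\item Step~3 also leaves an unspecified ``other case'' open, and carries a stray correction term in $c_2(F)$.
\end{itemize}
So the only genuinely numerical step is missing. Also, the sign of $\Delta(F)$ alone does not do the work; what you need is the lower bound on the square of the Bogomolov-destabilising class.

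\textbf{How to close the gap.} The direct argument closes quickly, and there is no second case.
\begin{itemize}
\item \emph{Chern classes of $F$.} Since $c_1(T_S)=0$, the elementary modification has $c_1(F)=-nH$ and $c_2(F)=24+\deg L$ exactly; the correction $-C\cdot c_1(T_S)$ vanishes. Hence $\Delta(F)=96+4\deg L-n^2H^2\le 96-n^2H^2$.
\item \emph{$F$ is always Bogomolov unstable.} We have $n^2\ge 2n-1$ and $(2n-1)H^2>96$, so $\Delta(F)<0$.
\item \emph{The destabilising subsheaf.} Bogomolov gives a saturated $\Oc_S(aH)\subset F\subset T_S$ with $F/\Oc_S(aH)\cong\Oc_S(-(a+n)H)\otimes I_Z$. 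The class $\xi=(2a+n)H$ lies in the positive cone, and $\xi^2=-\Delta(F)+4\ell(Z)\ge n^2H^2-96$.
\item \emph{How $\Pic(S)=\ZZ[H]$ enters.} The point is integrality, not divisibility of intersection numbers by $nH^2$. First, $\HH^0(S,\Om_S)=0$ forces $a\le -1$; this is all you need from stability of $T_S$. Second, $\xi\cdot H>0$ forces $2a+n\ge 1$.
\item \emph{Case $n\le 2$.} The two constraints $a\le-1$ and $2a+n\ge1$ are incompatible.
\item \emph{Case $n\ge 3$.} They give $1\le 2a+n\le n-2$, hence $(n-2)^2H^2\ge n^2H^2-96$, i.e.\ $4(n-1)H^2\le 96$. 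This contradicts $(2n-1)H^2>96$, because $4(n-1)\ge 2n-1$.
\end{itemize}

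You will therefore not land on the factor $2n-1$ ``exactly'' as Step~2 anticipates. The Picard-rank-one argument yields the sufficient condition $4(n-1)H^2>96$ (with only $n^2H^2>96$ needed for $n\le2$). This is implied by the stated hypothesis, so it proves the proposition.
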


As a consequence of the previous discussion, the interesting remaining  cases are multiples of low degree polarizations. When $H$ is ample of degree $2$, or very ample of degree $H^2 \geq 4$, Bott vanishing is completely determined by the following results of  \cite{DedieuSernesi} and \cite{KnutsenNormal}.

\begin{theorem} [{\cite[Lemma~3.5 and Theorem~3.6]{DedieuSernesi}, \cite[Lemma~2.3, (5), and Proposition~1.4]{KnutsenNormal}}]\label{prop:knutsen-normal}

Let $S$ be a K3 surface and $L$ be a very ample polarisation of
degree $L^2 \geq 4$, giving an embedding $S\subset\PP^g$. Then
\[
\HH^1(S,T_S(-nL))
\cong
\HH^0\bigl(S,N_{S/\PP^g}(-n)\bigr)
\qquad\text{for every }n\ge2.
\]
Moreover, $\HH^1(S,T_S(-nL))=0$ for all $n \geq 2$ except in the following cases.

\[
\begin{array}{@{}cccc@{}}
\toprule
L^2 & n
& h^1\!\left(S,T_S\otimes L^{-n}\right)
& \text{Notes} \\
\midrule
 4 & 2 & 10 & \text{any }(S,L) \\
 4 & 3 &  4 & \text{any }(S,L) \\
 4 & 4 &  1 & \text{any }(S,L) \\
 6 & 2 &  6 & \text{any }(S,L) \\
 6 & 3 &  1 & \text{any }(S,L) \\
 8 & 2 &  3 & \text{any }(S,L) \\
10 & 2 &  1 & \text{any }(S,L) \\
12 & 2 &  1 & (1) \\
14 & 2 &  1 & (2) \\
16 & 2 &  1 & (3) \\
18 & 2 &  1 & (4) \\
\bottomrule
\end{array}
\]
Here $(1),(2),(3),(4)$ have precise geometric descriptions (see \cite[Theorem~3.6]{DedieuSernesi}). The primitive cases among them occur on special Noether--Lefschetz loci. In particular, none of $(1),(2),(3),(4)$ can occur when $\Pic(S)=\ZZ[L]$. The 2-Veronese quartic among the cases in $(3)$ has $L=2D$, and the case in $(4)$ has $L=3D$, so these two cases are non-primitive.
\end{theorem}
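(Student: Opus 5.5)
The plan is to get the isomorphism $\HH^1(S,T_S(-nL))\cong\HH^0(S,N_{S/\PP^g}(-n))$ from the Euler and normal bundle sequences, and then to read off the table by computing $\HH^0(N_{S/\PP^g}(-n))$ with Koszul/Gaussian-map methods, as in \cite{DedieuSernesi,KnutsenNormal}.

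For the isomorphism, I twist the normal bundle sequence
\[
0\to T_S\to T_{\PP^g}|_S\to N_{S/\PP^g}\to 0
\]
by $\Oc_S(-n)$ and take cohomology:
\[
\HH^0(T_{\PP^g}|_S(-n))\to \HH^0(N_{S/\PP^g}(-n))\to \HH^1(T_S(-n))\to \HH^1(T_{\PP^g}|_S(-n)).
\]
To kill the outer terms I use the restricted Euler sequence $0\to\Oc_S(-n)\to \Oc_S(1-n)^{g+1}\to T_{\PP^g}|_S(-n)\to 0$.
\begin{itemize}
\item For $n\ge2$, $\HH^0(\Oc_S(1-n))=0$, and $\HH^1(\Oc_S(-n))=0$ by Kodaira vanishing on a K3. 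Hence $\HH^0(T_{\PP^g}|_S(-n))=0$.
\item $\HH^1(\Oc_S(1-n))=0$ (for $n=1$ this is $\HH^1(\Oc_S)=0$).
\item $\HH^2(\Oc_S(-n))\to\HH^2(\Oc_S(1-n))^{g+1}$ is Serre dual to the multiplication $\HH^0(\Oc_S(n-1))\otimes \HH^0(\Oc_S(1))^{\vee\vee}\to\HH^0(\Oc_S(n))$, i.e.\ the map $\HH^0(\Oc_S(n-1))\otimes V\to \HH^0(\Oc_S(n))$. This is surjective because $S\subset\PP^g$ is projectively normal (Saint-Donat, for very ample $L$).
\end{itemize}
Dually the map on $\HH^2$ is injective, so $\HH^1(T_{\PP^g}|_S(-n))=0$, and the isomorphism follows.

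For the vanishing and the table, I would identify $\HH^0(N_{S/\PP^g}(-n))$ with the degree-$(-n)$ part of the normal module. Via the conormal sequence and Serre duality it is dual to $\mathrm{coker}\bigl(\HH^1\to\bigr)$, or equivalently it can be described by Gaussian/Wahl maps.
\begin{itemize}
\item For $L^2=4$ (quartic hypersurface), $N=\Oc_S(4)$, so $h^0(\Oc_S(4-n))$ gives $10,4,1$ for $n=2,3,4$.
\item For $L^2=6$ (complete intersection $(2,3)$), $N=\Oc(2)\oplus\Oc(3)$, giving $h^0(\Oc(1))+h^0(\Oc)=5+1=6$ at $n=2$ and $1$ at $n=3$.
\item For $L^2=8$ (complete intersection $(2,2,2)$), the count is $3$.
\end{itemize}
For $g\ge6$ I would restrict to a hyperplane section, a canonical curve $C$. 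Using $\HH^0(N_{S}(-n))\hookrightarrow \HH^0(N_{C/\PP^{g-1}}(-n))$ for $n\ge2$ by induction, together with the known dimension of $\HH^0(N_C(-2))$ (the corank of the Wahl/Gaussian map, via Zak-type arguments), one should get $h^0\le1$. Nonvanishing occurs exactly when $S$ is an extendable-type special surface; this yields case $g=6$ (a Gushel–Mukai K3, the quadric section of a del Pezzo/Grassmannian cone) and the cases (1)–(4).

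The main obstacle is this classification for $g\ge7$. I would not reprove it but cite \cite[Theorem~3.6]{DedieuSernesi} and \cite{KnutsenNormal}, checking only the isomorphism and the complete-intersection counts directly. The final claims about Picard rank one follow because cases (1)–(4) require extra classes in $\Pic(S)$.
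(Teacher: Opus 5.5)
Your proposal is correct in substance and follows the paper's route. The paper gives no independent proof; it quotes \cite[Lemma~3.5 and Theorem~3.6]{DedieuSernesi} and \cite{KnutsenNormal}. Your derivation of $\HH^1(S,T_S(-nL))\cong\HH^0(S,N_{S/\PP^g}(-n))$ for $n\ge2$ is the standard argument behind the cited lemma: twist the normal-bundle and Euler sequences, use Kodaira vanishing for the $\HH^0$ and $\HH^1$ terms, and use Saint-Donat's projective normality for injectivity on $\HH^2$.

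There is one slip in your direct checks. A very ample $L$ with $L^2=8$ need not make $S$ a complete intersection of three quadrics. For instance, take a smooth divisor $S$ of bidegree $(2,3)$ on the Segre threefold $V=\PP^1\times\PP^2\subset\PP^5$. It is such a K3, and it is trigonal: a fibre $E$ of $S\to\PP^1$ satisfies $E\cdot L=3$. The only quadrics through $S$ are the three cutting out $V$.

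The value $3$ still holds there, since $N_{S/V}(-2)\cong\Oc_S(L-E)$ has three sections while $\HH^0(S,N_{V/\PP^5}(-2)|_S)=0$. But the table asserts this row for every $(S,L)$, so the trigonal case needs its own argument or the citation.

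Finally, as in the paper, everything for $L^2\ge10$ rests on \cite[Theorem~3.6]{DedieuSernesi}. Your hyperplane-section sketch does not carry that part: the claimed bound $h^0\le1$ is asserted rather than derived.
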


\begin{lemma}[{\cite[Lemma~3.7]{DedieuSernesi}}]
Let $S$ be a $K3$ surface, and let $L$ be an ample and globally
generated line bundle satisfying $L^2=2$. Then the dimensions of
\[
H^1\!\left(S,T_S\otimes L^{-n}\right), \qquad n\geq 1,
\]
are given by the following table.
\[
\renewcommand{\arraystretch}{1.2}
\begin{array}{@{}cccc@{}}
\toprule
n & g(L^n) & \operatorname{Cliff}(L^n)
& h^1\!\left(S,T_S\otimes L^{-n}\right) \\
\midrule
1      &  2      & 0  & 18 \\
2      &  5      & 0  & 15 \\
3      & 10      & 2  & 10 \\
4      & 17      & >2 &  6 \\
5      & 26      & >2 &  3 \\
6      & 37      & >2 &  1 \\
n\geq7 & n^2+1   & >2 &  0 \\
\bottomrule
\end{array}
\]
\end{lemma}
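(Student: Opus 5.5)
The plan is to compute $h^1(S,T_S\otimes L^{-n})$ directly from the double cover $\pi\colon S\to\PP^2$ branched along a smooth sextic $B=\{F=0\}$, and to identify it with a graded piece of the Jacobian ring $R=\CC[x_0,x_1,x_2]/(\partial_0F,\partial_1F,\partial_2F)$. The target identity is
\[
h^1(S,T_S\otimes L^{-n})=\dim R_{n+6}.
\]
The Hilbert series of $R$ is $\bigl((1-t^5)/(1-t)\bigr)^3$, whose coefficients are $1,3,6,10,15,18,19,18,15,10,6,3,1$ in degrees $0,\dots,12$ and zero afterwards. In degrees $7,\dots,13$ this gives exactly $18,15,10,6,3,1,0$, which is the last column of the table. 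The genus column is $g(L^n)=1+n^2$ by adjunction. The Clifford index column is classical: for $n=1,2$ the curves are hyperelliptic via $\pi$, and for $n\ge3$ one uses Green--Lazarsfeld; I would only cite this.

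\textbf{Step 1 (reduce to $\PP^2$).} Since $T_S\cong\Om_S$ and $L=\pi^*\Oc_{\PP^2}(1)$, the projection formula and finiteness of $\pi$ give
\[
\HH^1(S,T_S\otimes L^{-n})\cong \HH^1(\PP^2,\pi_*\Om_S(-n)).
\]
For a double cover branched along $B$ one has the standard splitting into invariant and anti-invariant parts,
\[
\pi_*\Om_S\cong \Om_{\PP^2}\oplus \Om_{\PP^2}(\log B)(-3).
\]
Bott's formula gives $\HH^1(\PP^2,\Om_{\PP^2}(-n))=0$ for $n\ge1$. Hence
\[
h^1(S,T_S\otimes L^{-n})=h^1\bigl(\PP^2,\Om_{\PP^2}(\log B)(-n-3)\bigr).
\]
By Serre duality this equals $h^1\bigl(\PP^2,T_{\PP^2}(-\log B)(n)\bigr)$, using $\omega_{\PP^2}=\Oc(-3)$ and $\Om(\log B)^\vee=T(-\log B)$. (Alternatively, dualise on $S$ first and use $T_S(-nL)$ directly; either route lands on the same group.)

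\textbf{Step 2 (compute via the Jacobian ring).} Use the defining sequence
\[
0\to T_{\PP^2}(-\log B)(n)\to T_{\PP^2}(n)\xrightarrow{\,dF\,}\Oc_B(n+6)\to 0.
\]
Since $\HH^1(\PP^2,T_{\PP^2}(n))=0$ for $n\ge1$, the group $\HH^1(T_{\PP^2}(-\log B)(n))$ is the cokernel of $\HH^0(T_{\PP^2}(n))\to \HH^0(\Oc_B(n+6))$. To identify this cokernel:
\begin{itemize}
\item Since $\HH^1(\Oc_{\PP^2}(n))=0$, we have $\HH^0(\Oc_B(n+6))=S_{n+6}/F\cdot S_n$.
\item By the Euler sequence, the image of $\HH^0(T_{\PP^2}(n))$ is spanned by $\sum_i a_i\,\partial_iF$ with $a_i\in S_{n+1}$.
\item $F$ itself lies in the Jacobian ideal by the Euler relation.
\end{itemize}
Therefore the cokernel is $R_{n+6}$.

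\textbf{Main obstacle.} The step needing the most care is the splitting of $\pi_*\Om_S$ in Step 1, specifically checking that the anti-invariant part is precisely $\Om_{\PP^2}(\log B)(-3)$ with the correct twist. I would verify this locally in the coordinates $w^2=f$, where the anti-invariant forms are $w\cdot df/f$ and $w\cdot dx_i$. As a consistency check, Euler characteristics should give $h^1=18$ at $n=1$, matching $\dim R_7$. Once this twist is confirmed, the rest is the routine Hilbert series computation above.
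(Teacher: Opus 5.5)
Your argument is correct and gives the right values, $h^1(S,T_S\otimes L^{-n})=\dim R_{n+6}=18,15,10,6,3,1,0$. The paper does not prove this lemma; it quotes it from Dedieu--Sernesi. Your route instead uses exactly the tools the paper deploys elsewhere for double planes: the Esnault--Viehweg splitting $\pi_*\Omega^1_S\cong\Omega^1_{\PP^2}\oplus\Omega^1_{\PP^2}(\log B)(-3)$ of \Cref{lem:double-plane}, and the Jacobian ring of the sextic. Your local check via $dw=\tfrac{w}{2}\,dF_6/F_6$ correctly pins down the twist.

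Step 2 can be shortened using the resolution $0\to\Oc(-9)\to\Oc(-4)^{\oplus3}\to\Omega^1_{\PP^2}(\log B)(-3)\to0$ of \Cref{lem:F-stable}. After twisting by $\Oc(-n)$, the vanishing of $\HH^1$ of line bundles on $\PP^2$ shows that $\HH^1(\Omega^1_{\PP^2}(\log B)(-n-3))$ is the kernel of $\HH^2(\Oc(-n-9))\to\HH^2(\Oc(-n-4))^{\oplus3}$. This kernel is Serre dual to the cokernel of $\nabla F_6\colon V_{3,n+1}^{\oplus3}\to V_{3,n+6}$, i.e.\ to $R_{n+6}$. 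This avoids the detour through $T_{\PP^2}(-\log B)(n)$ and the residue sequence.

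You should state rather than assume that $B$ is smooth. Ampleness of $L$ makes $\pi$ finite, hence flat, and a flat double cover, locally $w^2=F_6$, is singular exactly over the singular points of $B$. Smoothness of $B$ is what makes $\Omega^1_{\PP^2}(\log B)$ locally free with the stated splitting. It also makes $\partial_0F_6,\partial_1F_6,\partial_2F_6$ a regular sequence, so that $R$ has Hilbert series $(1+t+\cdots+t^4)^3$.

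Citing the Clifford-index column is fine, since the paper cites the entire table.
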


In the primitive polarization situation, as an immediate consequence of the previous results we have the following.
\begin{corollary}\label{cor:knutsen-double}
Let $(S,H)$ be a K3 surface with $\Pic(S)=\ZZ[H]$. Then
\[
\HH^0(C,T_S|_C)=0
\]
for every smooth curve $C\in|nH|$ in each of the following ranges.
\[
\begin{array}{c|cccccc}
H^2 & 2 & 4 & 6 & 8,\ldots,18 & 22 & \geq 20,\ \neq 22\\
\hline
n   & \geq 7 & \geq 5 & \geq 4 & \geq 3 & \geq 3 & \geq 1
\end{array}
\]
Consequently, the moduli map associated with $|nH|$ is unramified everywhere on its smooth locus in these ranges.
\end{corollary}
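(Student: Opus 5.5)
The plan is to reduce everything to \Cref{lem:kernel} applied to the line bundle $nH$. That lemma says
\[
\HH^1(S,T_S(-nH))=0 \quad\Longrightarrow\quad \HH^0(C,T_S|_C)=0 \text{ for every smooth } C\in|nH|.
\]
So for each pair $(H^2,n)$ in the table it suffices to show $h^1(S,T_S(-nH))=0$. Since $\Pic(S)=\ZZ[H]$, the class $H$ is ample and primitive. We then split into cases according to $H^2$.

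\textbf{Case $H^2\ge 20$, $H^2\neq 22$, $n\ge1$.} This is exactly \Cref{prop:good-range}, so there is nothing to prove.

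\textbf{Case $H^2=2$.} We first check the hypotheses of the Dedieu--Sernesi lemma, namely that $H$ is globally generated. By Saint-Donat, an ample $H$ with $H^2=2$ fails to be base-point free only if there is an elliptic curve $E$ with $H\cdot E=1$. That would require a class of square $0$ in $\Pic(S)=\ZZ[H]$, which is impossible since $(aH)^2=2a^2$. So $H$ is globally generated, and the table in the Dedieu--Sernesi lemma gives $h^1(S,T_S(-nH))=0$ for $n\ge7$.

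\textbf{Case $H^2\ge4$ with $H^2\le 18$ or $H^2=22$.} Here we need $H$ to be very ample in order to apply \Cref{prop:knutsen-normal}. By Saint-Donat, an ample $H$ with $H^2\ge4$ is very ample unless one of the following holds:
\begin{itemize}
\item there is an elliptic curve $E$ with $H\cdot E\in\{1,2\}$;
\item $H=2B$ with $B^2=2$.
\end{itemize}
The first would again need a square-zero class in $\ZZ[H]$, which does not exist. The second contradicts the primitivity of $H$. Hence $H$ is very ample and \Cref{prop:knutsen-normal} applies for $n\ge2$.

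Reading off its table, the only nonzero cases with $\Pic(S)=\ZZ[H]$ are:
\begin{itemize}
\item $(4,2)$, $(4,3)$, $(4,4)$;
\item $(6,2)$, $(6,3)$;
\item $(8,2)$, $(10,2)$;
\item $(12,2)$, $(14,2)$, $(16,2)$, $(18,2)$, but these four are excluded by the statement that cases $(1)$--$(4)$ cannot occur when $\Pic(S)=\ZZ[H]$.
\end{itemize}
Therefore vanishing holds for $n\ge5$ when $H^2=4$, for $n\ge4$ when $H^2=6$, and for $n\ge3$ when $8\le H^2\le18$. For $H^2=22$ the table lists no exceptions at all, so vanishing holds for every $n\ge2$, and in particular for $n\ge3$ as claimed.

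The unramifiedness statement then follows from the first part of \Cref{lem:kernel}, since $\ker d\mu_{[C]}\cong \HH^0(C,T_S|_C)=0$ at every point of $|nH|_{\sm}$.

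The only step needing care is confirming that the hypotheses of the cited results (global generation for degree $2$, very ampleness otherwise) follow from $\Pic(S)=\ZZ[H]$. Saint-Donat's criteria handle this, because the rank-one lattice $\ZZ[H]$ contains no elliptic classes and $H$ is primitive. Everything else is bookkeeping against the two tables.
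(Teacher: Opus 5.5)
Your proposal is correct and matches what the paper intends: the paper states the corollary as an immediate consequence of \Cref{lem:kernel} applied to $nH$, combined with Totaro's vanishing (\Cref{prop:good-range}), the Dedieu--Sernesi table for $H^2=2$, and Theorem~\ref{prop:knutsen-normal}. Your Saint-Donat check that $\Pic(S)=\ZZ[H]$ forces $H$ to be globally generated in degree $2$ and very ample in degree $\ge4$ makes explicit a hypothesis the paper leaves implicit, and your remark that the argument even gives $n\ge2$ for $H^2\in\{12,\dots,18,22\}$ agrees with the paper's summary table.
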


\begin{remark} 
\label{remark-quasifiniteness}
When $S$ is a Picard-rank-one K3 surface, \Cref{cor:knutsen-double} implies that the map is quasi-finite in these ranges. Quasi-finiteness for curves in the primitive polarisation follows instead from \cite[Theorem~A]{CG}. More generally, we have quasi-finiteness for every polarised K3 surface $(S,H)$ whenever $H^2 \geq 20$ and $\operatorname{Cliff}(H) \geq 2$ \cite[Proposition~8.6]{CDS}.
\end{remark}

For a primitively polarised K3 surface $(S,H)$ there are also strong injectivity results. More precisely, if $H^2 \geq 4$, then the
Mukai program, initiated in \cite{mukai} and carried out by Feyzbakhsh in \cite{FeyzbakhshI,FeyzbakhshII}, implies for $g=11$ and $g\geq13$ that the moduli map $|H|_{\sm} \rightarrow \mathcal{M}_g$ is injective. More generally, Cheng--Li--Wu prove the corresponding reconstruction for every irreducible $C\in|nH|$ provided $ng\geq11$ and $ng\neq12$ \cite[Theorem~1.1]{CLW}. It follows that $\mu_n$ is injective in this range.

\begin{remark}
The exclusion of degree $2$ is essential. If $f\colon S\to\PP^2$ is the double cover defined by $|H|$ and $\iota$ is its covering involution, then a general smooth curve $C\in|nH|$ is distinct from, but isomorphic to, $\iota(C)$; hence $\mu_n$ is not injective, even when $\Pic(S)=\ZZ[H]$.

More generally, fibres of $\mu_n$ contain the orbits of the polarised automorphism group, so injectivity can fail at higher Picard rank. For example, on the Fermat quartic
\[
   S_F=\{x_0^4+x_1^4+x_2^4+x_3^4=0\}\subset\PP^3,
\]
the automorphism exchanging $x_0$ and $x_1$ acts nontrivially on every $|\Oc_{S_F}(n)|$. A general smooth member and its translate are therefore distinct isomorphic curves, so none of the corresponding maps $\mu_n$ is injective.
\Cref{ex:quarticexamples} moreover exhibits an isotrivial pencil in
$|H|$, so $\mu_1$ is not quasi-finite.
\end{remark}

\begin{table}[htbp]
\centering
\footnotesize
\renewcommand{\arraystretch}{1.2}
\begin{tabularx}{\textwidth}{@{}>{\raggedright\arraybackslash}p{0.18\textwidth}
                                >{\raggedright\arraybackslash}X
                                >{\raggedright\arraybackslash}X
                                >{\raggedright\arraybackslash}X@{}}
\toprule
$H^2$ & Quasi-finite & Unramified & Injective \\
\midrule
$2$
& $n=1$ or $n\ge4$.
& Yes for $n\ge7$; fails for $n=1$ for general $S$, ramified at exactly $171$ points and for $n=6$, ramified along a pos.-diml. locus.
& Fails for $n\ge3$. \\
\addlinespace
$4$
& $n=1$ or $n\ge3$.
& Yes for $n=1$ or $\ge5$ for general $S$; fails for $n=4$, ramified along a pos.-diml. locus.
& $n\ge5$. \\
\addlinespace
$6$
& $n\ge1$.
& $n\ge4$.
& $n\ge4$. \\
\addlinespace
$8,10$
& $n\ge1$.
& $n\ge3$.
& $n\ge3$. \\
\addlinespace
$12,\ldots,18,22$
& $n\ge1$.
& $n\ge2$.
& $n\ge2$. \\
\addlinespace
$20$ or $\ge24$
& $n\ge1$.
& $n\ge1$.
& $n\ge1$. \\
\bottomrule
\end{tabularx}
\vspace{0.5em}
\caption{Known behaviour of $\mu_n$ for K3 surfaces with $\Pic(S)=\ZZ[H]$, where $H$ is the primitive ample generator. Unless explicitly qualified otherwise, the positive quasi-finiteness, unramifiedness and injectivity statements hold for every such surface and every smooth member of the indicated linear system.
Quasi-finiteness follows from \cite{CG,CDS} (see Remark \ref{remark-quasifiniteness}); the unramifiedness statements combine
\cite{Totaro,DedieuSernesi,KnutsenNormal} with
\Cref{thm:very-general_h2}, \Cref{cor:quartic-unramified},
\Cref{cor:knutsen-double}, \Cref{sec:stability-degree2} and
\Cref{sec:stability-degree4};
injectivity follows from \cite{FeyzbakhshI,FeyzbakhshII,CLW}.}
\label{tab:moduli-map-summary}
\end{table}

To address what remains open from the three guiding questions in the introduction:

\begin{description}[style=sameline,leftmargin=2em,labelsep=.5em,
    itemsep=.4\baselineskip,topsep=.5\baselineskip]
\item[\textbf{(Finiteness)}] Within the Picard-rank-one setting, the quasi-finiteness question remains open
only for (see \cite[\S 3.1]{CG})
\[
   (H^2,n)=(2,2),\ (2,3),\ (4,2).
\]
\item[\textbf{(Injectivity)}] Injectivity in Picard rank one remains open for
$n=1,2$ when $H^2=2$; $1\le n\le4$ when $H^2=4$;
$1\le n\le3$ when $H^2=6$; $n=1,2$ when $H^2=8$ or $10$;
and $n=1$ when $H^2\in\{12,14,16,18,22\}$. 
\item[\textbf{(Unramifiedness)}] The assertions for $n=1$ in degrees $2$ and $4$ in \Cref{tab:moduli-map-summary} are for a general surface. Subject to this
the open cases for everywhere unramifiedness are $2\le n\le5$ when
$H^2=2$; $n=2,3$ when $H^2=4$; $1\le n\le3$ when $H^2=6$;
$n=1,2$ when $H^2=8$ or $10$; and $n=1$ when
$H^2\in\{12,14,16,18,22\}$.
\end{description}

\section{General hypersurfaces}\label{sec:hypersurfaces}

The study of moduli-theoretic variation of hyperplane sections  has a long history. See, for example, \cite{HMP,VanOpstallVeliche,PatelRiedlTseng,bricalli2026maximalvariationproblemlefschetz}. Recently, Beauville in \cite{BE}, expanded on the connection between maximal variation and Lefschetz properties of Jacobian rings (see also  \cite{IlardiNasrollahTafazolian} for related results). See also \cite[Section~6.3]{Bertsch} for related explicit algebraic calculations regarding quartic surfaces. We mostly follow Beauville \cite{BE}, though our aims are slightly different. In most of the aforementioned papers, the hypersurface $X$ is arbitrary and one derives implications about the \textit{general} hyperplane section $Y$ of $X$, whereas here we start with a general $X$ and derive implications for \textit{every} smooth $Y$. For $r\ge1$ and $k\ge0$, set
\[
   V_r:=\CC[x_0,\dots,x_{r-1}],
   \qquad V_{r,k}:=(V_r)_k.
\]
We now consider the ramification of the moduli map for smooth hypersurfaces of dimension $n\geq3$ and degree $d\geq4$. Fix coordinates $x_0, \dots, x_n$ on $\PP^n$ and let $f \in V_{n+1,d}$ be the equation of a smooth hypersurface $X$ of degree $d$. For a smooth hyperplane section $Y = X \cap \{L=0\}$, we may change coordinates so that $L=x_n$. Let
\begin{align*}
   g&:=f|_{x_n=0}\in V_{n,d} && (\text{equation of } Y \text{ in } \{L=0\}),\\
   h&:=\partial_n f|_{x_n=0}\in V_{n,d-1} && (\text{transverse derivative}).
\end{align*}
There are two Jacobian rings that govern the deformation theory, both of which are graded Artinian Gorenstein rings equipped with perfect pairings:
\begin{enumerate}
    \item That of the smooth hyperplane section $Y$,
    \[
        R_{g} = \CC[x_0, \dots, x_{n-1}] / J_{g}, \quad J_{g} = (\partial_0 g, \dots, \partial_{n-1} g).
    \]
    \item That of the hypersurface $X$,
    \[ 
        R_f = \CC[x_0, \dots, x_n] / J_f, \quad J_f = (\partial_0 f, \dots, \partial_n f).
    \]
\end{enumerate}

For a smooth degree-$d$ hypersurface in $\PP^{r-1}$, the partial derivatives form a regular sequence of $r$ forms of degree $d-1$. Hence its Jacobian ring $R$ has Hilbert series
\[
\operatorname{Hilb}_R(t)
:=\sum_{m\ge0}\dim(R_m)t^m
=\frac{(1-t^{d-1})^r}{(1-t)^r}
=(1+t+\cdots+t^{d-2})^r.
\]
Its socle degree is the largest $s$ for which $R_s\neq0$, namely $s=r(d-2)$; the top piece $R_s$ is one-dimensional and multiplication gives perfect pairings $R_m\otimes R_{s-m}\to R_s\cong\CC$.

For instance, when $X=S\subset\PP^3$ is a quartic surface, $R_g$ is the Jacobian ring of a smooth plane quartic and $R_f$ is a complete intersection of four cubics. Their Hilbert series are
\begin{align*}
\operatorname{Hilb}_{R_g}(t)
&=(1+t+t^2)^3
=1+3t+6t^2+7t^3+6t^4+3t^5+t^6,\\
\operatorname{Hilb}_{R_f}(t)
&=(1+t+t^2)^4
=1+4t+10t^2+16t^3+19t^4+16t^5+10t^6+4t^7+t^8.
\end{align*}
Thus their socle degrees are $6$ and $8$, respectively.

The proof of the following is taken more or less verbatim from Beauville's paper.

\begin{proposition}[{\cite[Proposition~2]{BE}}]\label{prop:beauville}
Let $X=\{f=0\}\subset\PP^n$ ($n\ge3$) be a smooth hypersurface of degree $d\ge3$, with $d\ge4$ if $n=3$, and let $Y=X\cap\{L=0\}$ be a smooth hyperplane section. Multiplication by $L$ gives a map
\[
\begin{tikzcd}
R_{f,d-1} \arrow[r, "\cdot L"] & R_{f,d}
\end{tikzcd}
\]
and we have a canonical isomorphism
\[
\HH^0(Y,T_X|_Y)
\cong\ker\bigl(\cdot L\colon R_{f,d-1}\to R_{f,d}\bigr),
\]
where $R_f$ is the Jacobian ring of $X$.
\end{proposition}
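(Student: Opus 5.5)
The plan is to run the argument of \Cref{lem:kernel} with $X$ in place of $S$ and then translate everything into the Jacobian ring using Griffiths' residue description of $\HH^1(X,T_X)$. Tensoring $0\to\Oc_X(-1)\xrightarrow{L}\Oc_X\to\Oc_Y\to0$ with $T_X$ gives
\[
0\to \HH^0(X,T_X)\to \HH^0(Y,T_X|_Y)\to \HH^1(X,T_X(-1))\xrightarrow{\cdot L}\HH^1(X,T_X).
\]
For a smooth hypersurface with $n\ge3$ and $d\ge3$ we have $\HH^0(X,T_X)=0$. So $\HH^0(Y,T_X|_Y)$ is the kernel of multiplication by $L$ from $\HH^1(X,T_X(-1))$ to $\HH^1(X,T_X)$. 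It remains to identify these two groups, and the map between them, with the graded pieces $R_{f,d-1}\to R_{f,d}$.

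For this I use the Euler sequence $0\to\Oc_X\to\Oc_X(1)^{n+1}\to T_{\PP^n}|_X\to0$ together with the normal sequence $0\to T_X\to T_{\PP^n}|_X\to\Oc_X(d)\to0$, both twisted by $\Oc_X(k)$ for $k=-1,0$.

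\begin{itemize}
\item From the normal sequence, $\HH^1(X,T_X(k))$ is the cokernel of $\HH^0(T_{\PP^n}|_X(k))\to\HH^0(\Oc_X(d+k))$, provided $\HH^1(T_{\PP^n}|_X(k))=0$.
\item From the Euler sequence, $\HH^0(T_{\PP^n}|_X(k))$ is a quotient of $\HH^0(\Oc_X(k+1))^{n+1}$ (since $\HH^1(\Oc_X(k))=0$ for $n\ge3$). Its image in $\HH^0(\Oc_X(d+k))$ is the span of the products $a_i\,\partial_i f$ with $a_i$ of degree $k+1$.
\item Since $d+k<d$ for $k=-1$, and since for $k=0$ restriction identifies $S_d$ modulo $f$ with $\HH^0(\Oc_X(d))$ while $f\in J_f$ by Euler's formula, the cokernel is $R_{f,d+k}$.
\end{itemize}

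The map $\cdot L$ is compatible with both sequences, so it becomes multiplication by $L$ on $R_f$.

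I expect the main obstacle to be the vanishing $\HH^1(X,T_{\PP^n}|_X(k))=0$ for $k=-1,0$, since this is where the hypotheses on $n$ and $d$ enter. From the Euler sequence it reduces to $\HH^1(\Oc_X(k+1))=0$, which holds for $n\ge3$, and to $\HH^2(\Oc_X(k))=0$.
\begin{itemize}
\item For $n\ge4$, $\dim X\ge3$ and all intermediate cohomology of line bundles vanishes, so $\HH^2(\Oc_X(k))=0$.
\item For $n=3$, $X$ is a surface and Serre duality gives $\HH^2(\Oc_X(k))\cong\HH^0(\Oc_X(d-4-k))^\vee$. This is nonzero for $d\ge4$, so the naive argument fails exactly in the surface case.
\end{itemize}
In that case I will work instead with the connecting map $\HH^1(T_{\PP^3}|_X(k))\to\HH^2(\Oc_X(k))$ and check directly that the relevant map is still injective, following Beauville. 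Concretely, I will show that $\HH^1(T_X(k))\to\HH^1(T_{\PP^3}|_X(k))$ vanishes on the residue part, or else compute $\HH^1(T_X(-1))$ via Serre duality as $\HH^1(\Omega_X(1))^\vee$, which can be identified with $R_{f,d-1}$ by a Griffiths-type residue argument.

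The restrictions $d\ge3$, and $d\ge4$ when $n=3$, should also be exactly what makes $\HH^0(X,T_X)=0$ and keeps the Jacobian-ring description nondegenerate in degrees $d-1$ and $d$.
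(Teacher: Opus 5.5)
Your global set-up is sound and differs from the paper's. Since $\HH^0(X,T_X)=0$, the restriction sequence gives $\HH^0(Y,T_X|_Y)=\ker\bigl(\cdot L\colon\HH^1(X,T_X(-1))\to\HH^1(X,T_X)\bigr)$, exactly as in \Cref{lem:kernel}. But the step you single out as the main obstacle is false in the case the paper actually uses. You want $\HH^1(X,T_{\PP^n}|_X(k))=0$ for $k=-1,0$, hence $\HH^1(X,T_X(k))\cong R_{f,d+k}$. For $k=0$ and $(n,d)=(3,4)$ we have $\HH^1(\Oc_X(1))=\HH^2(\Oc_X(1))=0$, so the Euler sequence gives $\HH^1(X,T_{\PP^3}|_X)\cong\HH^2(X,\Oc_X)\cong\CC$. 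Indeed $h^1(X,T_X)=20$ while $\dim R_{f,4}=19$. So no direct check of injectivity of $\HH^2(\Oc_X)\to\HH^2(\Oc_X(1))^{\oplus4}$ can succeed for quartic surfaces.

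What you are missing is that at $k=0$ you only need injectivity of $R_{f,d}\to\HH^1(X,T_X)$, and that comes for free. As $\HH^1(\Oc_X)=0$, the image of $\HH^0(T_{\PP^n}|_X)$ in $\HH^0(\Oc_X(d))=V_{n+1,d}/\CC f$ is $(J_f)_d/\CC f$. Hence the connecting map $\delta_0$ induces an injection $R_{f,d}\hookrightarrow\HH^1(X,T_X)$ with no vanishing hypothesis. Suppose $\delta_{-1}\colon V_{n+1,d-1}\to\HH^1(X,T_X(-1))$ is surjective with kernel $(J_f)_{d-1}$. Then naturality, $L\cdot\delta_{-1}(e)=\delta_0(Le)$, shows that $\delta_{-1}(e)\in\ker(\cdot L)$ if and only if $Le\in(J_f)_d$, which is the statement.

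So the only vanishing you need is $\HH^1(X,T_{\PP^n}|_X(-1))=0$, and this is where the hypotheses enter. They do not enter through $\HH^0(X,T_X)=0$, which holds for all $n\ge3$, $d\ge3$, since the partials of degree $d-1\ge2$ have no linear syzygies. The Euler sequence gives the exact sequence $0=\HH^1(\Oc_X)^{\oplus(n+1)}\to\HH^1(T_{\PP^n}|_X(-1))\to\HH^2(\Oc_X(-1))\xrightarrow{(x_i)}\HH^2(\Oc_X)^{\oplus(n+1)}$. For $n\ge4$ we have $\HH^2(\Oc_X(-1))=0$. For $n=3$ the last map is Serre dual to $V_{4,d-4}^{\oplus4}\xrightarrow{(x_i)}V_{4,d-3}$, which is surjective if and only if $d\ge4$; for cubic surfaces one gets $\HH^1(T_{\PP^3}|_X(-1))\cong\CC$. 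Your Serre-duality fallback is also off: $\HH^1(T_X(-1))^\vee\cong\HH^1(\Omega^1_X(d-3))$, which equals $\HH^1(\Omega^1_X(1))$ only when $d=4$.

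With these corrections your proof is complete. The paper instead stays on $Y$. It lifts sections of $T_{\PP^n}|_Y$ to linear vector fields, where the same $d\ge4$ condition appears as injectivity of $\HH^1(\Oc_Y)\to\HH^1(\Oc_Y(1))^{\oplus4}$, and then writes $\sum L_i\partial_if=x_nE+af$ explicitly. Your route gets canonicity and well-definedness of the isomorphism for free from naturality of connecting maps. The paper's route gives the explicit polynomial recipe that feeds into \Cref{lem:criterion}.
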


\begin{proof}
Choosing coordinates so that $L=x_n$, $Y$ is defined by $x_n=f=0$. From the normal-bundle sequence
\[
\begin{tikzcd}[column sep=small]
0 \arrow[r] & T_X|_Y \arrow[r] & T_{\PP^n}|_Y
\arrow[r, "df"] & \Oc_Y(d) \arrow[r] & 0,
\end{tikzcd}
\]
the global sections of $T_X|_Y$ identify with the kernel of $df$.

Using the Euler sequence on $\PP^n$ restricted to $Y$, the restriction map $\HH^0(\PP^n,T_{\PP^n})\to\HH^0(Y,T_{\PP^n}|_Y)$ is surjective, because $\HH^1(Y,\Oc_Y)=0$ (for $n\ge4$) or its map to $\HH^1(Y,\Oc_Y(1)^{\oplus(n+1)})$ is injective (for $n=3$, $d\ge4$). Thus any section of $T_{\PP^n}|_Y$ lifts to a linear vector field $D=\sum_{i=0}^n L_i\partial_i$ with $L_i\in V_{n+1,1}$, uniquely defined modulo the Euler vector field $\sum_{i=0}^n x_i\partial_i$.

The condition $df(D)=0$ on $Y$ means that $\sum_{i=0}^n L_i\partial_i f$ lies in the degree-$d$ part of the ideal $(x_n,f)$, so there exist $E\in V_{n+1,d-1}$ and $a\in\CC$ such that
\[
   \sum_{i=0}^n L_i\partial_i f = x_n E + af.
\]
By Euler's identity $d\cdot f=\sum_{i=0}^n x_i\partial_i f$, the term $af$ lies in the Jacobian ideal $J_f$. Projecting the equation to the degree-$d$ piece of the Jacobian ring $R_f$, the left-hand side vanishes, giving $x_n E=0$ in $R_{f,d}$. This determines a class
\[
[E]\in\ker\bigl(\cdot x_n\colon R_{f,d-1}\to R_{f,d}\bigr),
\]
which is nonzero when the original section is nonzero.

Conversely, given $E\in V_{n+1,d-1}$ such that $x_n E\in (J_f)_d$, we may write $x_n E=\sum_{i=0}^n L_i\partial_i f$ for some linear forms $L_i$. Adjusting $E$ modulo $(J_f)_{d-1}$, we can assume the $L_i$ are independent of $x_n$. Then $D=\sum_{i=0}^n L_i\partial_i$ defines a vector field mapping to $0$ under $df$. One verifies this gives a well-defined isomorphism.
\end{proof}

For the genericity theorem below, we will need the following alternative description of the above dimension. This is very similar in spirit to \cite[Proposition~3.1]{IlardiNasrollahTafazolian}.

\begin{lemma}\label{lem:criterion}
Let $X=\{f=0\}\subset\PP^n$ ($n\ge3$) be a smooth hypersurface of degree $d\ge3$, with $d\ge4$ if $n=3$, and $Y=X\cap\{L=0\}$ a smooth hyperplane section. Choosing coordinates so that $L=x_n$, let $g=f|_{x_n=0}$ be the equation of $Y$ in $\PP^{n-1}$ and $h=\partial_n f|_{x_n=0}$ the transverse derivative. Multiplication by $\bar h$ gives
\[
\begin{tikzcd}
R_{g,1} \arrow[r, "\cdot\bar h"] & R_{g,d}.
\end{tikzcd}
\]
Then
\[
\dim\HH^0(Y,T_X|_Y)
=n-\rank\bigl(\cdot\bar h\colon R_{g,1}\to R_{g,d}\bigr),
\]
and hence $\HH^0(Y,T_X|_Y)\neq0$ if and only if there exists a nonzero linear form $\lambda\in R_{g,1}$ such that $\lambda h\in J_{g}$. 
\end{lemma}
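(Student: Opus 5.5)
The plan is to compute $\HH^0(Y,T_X|_Y)$ directly on $Y$, working in the coordinates $x_0,\dots,x_{n-1}$ of the hyperplane $\{x_n=0\}\cong\PP^{n-1}$. This differs from the proof of \Cref{prop:beauville}, which works in $R_f$. As there, $\HH^0(Y,T_X|_Y)$ is the kernel of $df\colon \HH^0(Y,T_{\PP^n}|_Y)\to\HH^0(Y,\Oc_Y(d))$.

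\textbf{Step 1: describe $\HH^0(Y,T_{\PP^n}|_Y)$.}
\begin{itemize}
\item The Euler sequence restricted to $Y$ shows, exactly as in the proof of \Cref{prop:beauville}, that this space is $\HH^0(Y,\Oc_Y(1))^{\oplus(n+1)}/\CC\cdot(\text{Euler})$.
\item For $n\ge4$ this uses $\HH^1(Y,\Oc_Y)=0$. For $n=3$, $d\ge4$ it uses injectivity of $\HH^1(\Oc_Y)\to\HH^1(\Oc_Y(1))^{\oplus 4}$.
\item Since $Y\subset\PP^{n-1}$ is a hypersurface of degree $d\ge3$, it is linearly normal and lies in no hyperplane of $\PP^{n-1}$. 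Hence $\HH^0(Y,\Oc_Y(1))=V_{n,1}$.
\item The key point is that $x_n$ restricts to zero. So a section is a tuple $(l_0,\dots,l_n)$ with $l_i\in V_{n,1}$, taken modulo the restricted Euler field $(x_0,\dots,x_{n-1},0)$.
\end{itemize}

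\textbf{Step 2: translate the condition $df(D)=0$ on $Y$.} Restricting to $x_n=0$, we have $\partial_i f|_{x_n=0}=\partial_i g$ for $i<n$ and $\partial_n f|_{x_n=0}=h$. The condition becomes
\[
\sum_{i=0}^{n-1} l_i\,\partial_i g + l_n h \in \HH^0(\PP^{n-1},\Oc(d))\cap (g)=\CC\cdot g .
\]
By Euler's identity $d\cdot g=\sum_{i<n} x_i\partial_i g$, the right-hand side lies in $(J_g)_d$. Hence a solution forces $l_n h\in (J_g)_d$, i.e.\ $\bar l_n\in K:=\ker(\cdot\bar h\colon R_{g,1}\to R_{g,d})$. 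Here $R_{g,1}=V_{n,1}$, because $J_g$ is generated in degree $d-1\ge2$.

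Conversely, take $l_n\in K$ and $a\in\CC$. Since every element of $(J_g)_d$ has the form $\sum_{i<n} l_i\partial_i g$ with linear $l_i$, we can solve $\sum_{i<n} l_i\partial_i g = a g - l_n h$ for $(l_0,\dots,l_{n-1})$.

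\textbf{Step 3: uniqueness of the $l_i$.} Solutions for fixed $(l_n,a)$ are unique. A difference of two of them gives a linear syzygy among $\partial_0 g,\dots,\partial_{n-1}g$. But these form a regular sequence of forms of degree $d-1\ge2$, so by exactness of the Koszul complex all syzygies have degree at least $d-1\ge2$. Thus there are no nonzero linear syzygies.

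The solution space is therefore isomorphic to $K\oplus\CC$, via $(l_n,a)$. The restricted Euler field is exactly the solution with $l_n=0$ and $a=d$. Quotienting by it gives
\[
\dim\HH^0(Y,T_X|_Y)=\dim K=n-\rank(\cdot\bar h),
\]
using $\dim R_{g,1}=n$. The final "iff" statement follows immediately.

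The main obstacle is Step 1, in the curve case $n=3$. There one must ensure that every section of $T_{\PP^3}|_Y$ comes from linear vector fields, i.e.\ the injectivity of $\HH^1(\Oc_Y)\to\HH^1(\Oc_Y(1))^{\oplus4}$. I would take this directly from the argument already used in \Cref{prop:beauville}. Everything else is the bookkeeping above, where the essential inputs are the vanishing of $x_n$ on $Y$ and the absence of linear Koszul syzygies.
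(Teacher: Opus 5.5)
Your argument is correct, and it takes a genuinely different route from the paper.

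\textbf{The paper's route.} The paper never works on $Y$ directly. It invokes \Cref{prop:beauville} to identify $\HH^0(Y,T_X|_Y)$ with $\ker(\cdot x_n\colon R_{f,d-1}\to R_{f,d})$. It then uses the exact sequence $0\to K\to R_f(-1)\to R_f\to R_g/(\bar h)\to 0$, which comes from $R_f/x_nR_f\cong R_g/(\bar h)$. Finally, the Hilbert series identity $H_n(t)=(1+t+\dots+t^{d-2})H_{n-1}(t)$ gives $\rho_n(d)-\rho_n(d-1)=\rho_{n-1}(d)-\rho_{n-1}(1)$. This yields only an equality of dimensions.

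\textbf{Your route.} You bypass $R_f$ entirely. Your two inputs are $\HH^0(Y,\Oc_Y(1))=V_{n,1}$ with $x_n\mapsto 0$, and the absence of linear Koszul syzygies among $\partial_0g,\dots,\partial_{n-1}g$. From these you get an explicit isomorphism $\HH^0(Y,T_X|_Y)\cong\ker(\cdot\bar h\colon R_{g,1}\to R_{g,d})$. It sends the class of $\sum_i l_i\partial_i$ to its transverse coefficient $l_n$, which is well defined since the Euler field has $l_n=0$.

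\textbf{What each buys.} Your argument is more elementary and self-contained. It avoids the converse direction of \Cref{prop:beauville}, which the paper only sketches ("one verifies\dots"). It also identifies the sections themselves, not just their number. The paper's argument is shorter once \Cref{prop:beauville} is available. It also keeps the problem in the framework of multiplication by $L$ on $R_f$, the weak-Lefschetz viewpoint behind Beauville's cubic-threefold argument.

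\textbf{The $n=3$ input.} The step you flagged as the main obstacle is immediate by Serre duality. Since $x_3|_Y=0$, the dual of $\HH^1(Y,\Oc_Y)\to\HH^1(Y,\Oc_Y(1))^{\oplus4}$ is the map $\HH^0(Y,\Oc_Y(d-4))^{\oplus4}\to\HH^0(Y,\Oc_Y(d-3))$, $(a_i)\mapsto\sum_{i<3}x_ia_i$. This map is surjective for $d\ge4$, because $V_{3,k}\to\HH^0(Y,\Oc_Y(k))$ is onto for all $k\ge0$ and $V_{3,1}\cdot V_{3,d-4}=V_{3,d-3}$.
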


\begin{proof}
By \Cref{prop:beauville}, multiplication by $x_n$ gives
\[
\begin{tikzcd}
R_{f,d-1} \arrow[r, "\cdot x_n"] & R_{f,d}.
\end{tikzcd}
\]
Moreover,
\[
\HH^0(Y,T_X|_Y)
\cong\ker\bigl(\cdot x_n\colon R_{f,d-1}\to R_{f,d}\bigr).
\]
Modulo $x_n$, the $n+1$ generators of the Jacobian ideal $J_f$ become $h$ and $\partial_0 g,\dots,\partial_{n-1} g$, so 
\[
   R_f/x_n R_f\cong R_{g}/(\bar h).
\] 
Taking the degree-$d$ part of the exact sequence
\[
\begin{tikzcd}[column sep=small]
0 \arrow[r] & K \arrow[r] & R_f(-1) \arrow[r, "\cdot x_n"]
& R_f \arrow[r] & R_g/(\bar h) \arrow[r] & 0
\end{tikzcd}
\]
for $K:=\ker(\cdot x_n)$, gives $\dim \HH^0(Y,T_X|_Y) = \dim K_d = \dim R_{f,d-1} - \dim R_{f,d} + \dim(R_{g}/(\bar h))_d$. Write
\[
   \rho_n(k):=\dim_{\CC}R_{f,k},
   \qquad
   \rho_{n-1}(k):=\dim_{\CC}R_{g,k}
\]
for their Hilbert functions. Using
\[
\dim(R_g/(\bar h))_d
=\rho_{n-1}(d)-\rank(\cdot\bar h),
\]
we obtain
\[
\dim \HH^0(Y,T_X|_Y)
=\rho_n(d-1)-\rho_n(d)+\rho_{n-1}(d)-\rank(\cdot\bar h).
\]
For $m\in\{n-1,n\}$, write
\[
   H_m(t):=\sum_{k\ge0}\rho_m(k)t^k=(1+t+\dots+t^{d-2})^{m+1}.
\]
The identity $H_m(t) = (1+t+\dots+t^{d-2})H_{m-1}(t)$ implies $\rho_n(d) - \rho_n(d-1) = \rho_{n-1}(d) - \rho_{n-1}(1)$. Since $d\ge 3$, the generators of the Jacobian ideal of $g$ have degree $d-1\ge 2$, so in degree $1$ there are no relations and $\rho_{n-1}(1) = \dim V_{n,1} = n$. Therefore
\[
   \dim \HH^0(Y,T_X|_Y)=n-\rank(\cdot\bar h).
\]

Finally, $\HH^0(Y,T_X|_Y)\neq0$ if and only if $\rank(\cdot\bar h)<n$, i.e.,\ some nonzero $\lambda\in R_{g,1}$ has $\lambda\bar h=0$ in $R_{g,d}$, that is $\lambda h\in J_{g}$.
\end{proof}

In light of \Cref{lem:criterion}, we wish to bound the locus of pairs $(g,h)$ that admit ramification, where $g \in V_{n,d}$ plays the role of the hyperplane section equation and $h \in V_{n,d-1}$ the transverse derivative of the underlying hypersurface $f$. We start with the following technical lemma.

\begin{lemma}\label{lem:incidence-bound}
Let \(n\ge3\) and \(d\ge4\). For
\[
D=\sum_{i=0}^{n-1}A_i\partial_i,
\qquad A_i\in V_{n,1},
\]
consider the linear map
\[
\begin{tikzcd}[row sep=small]
V_{n,d}\oplus V_{n,d-1}
  \arrow[r,"M_D"]
& V_{n,d},\\
(g,h)\arrow[r,mapsto]
&D(g)-x_{n-1}h,
\end{tikzcd}
\]
and the  incidence variety
\[
\mathcal I_{n,d}:=
\left\{
(g,h,D)\in
V_{n,d}\times V_{n,d-1}\times(V_{n,1})^n
\ \middle|\ M_D(g,h)=0
\right\}.
\]
Then
\[
\dim\mathcal I_{n,d}\le \dim V_{n,d}+n^2-n.
\tag{1}\label{eq:I-general-bound}
\]
In the exceptional case \((n,d)=(3,4)\), one has the stronger estimate
\[
\dim\mathcal I_{3,4}\le19.
\tag{2}\label{eq:I-exceptional-bound}
\]
\end{lemma}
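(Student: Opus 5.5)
The plan is to fibre $\mathcal I_{n,d}$ over the space $(V_{n,1})^n\ni D$, which has dimension $n^2$. This reduces the estimate to a rank computation for each fixed $D$, followed by a stratification of the $D$'s by that rank.

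\emph{Reduction to a map on the hyperplane.} Fix $D$. If $M_D(g,h)=0$, then $h$ is determined by $g$, since $x_{n-1}h=D(g)$ and $V_n$ is a domain. So the fibre over $D$ is isomorphic to
\[
K_D:=\{g\in V_{n,d}\mid x_{n-1}\text{ divides }D(g)\}=\ker\bigl(\bar D\colon V_{n,d}\to V_{n-1,d}\bigr),
\]
where $\bar D(g)=D(g)|_{x_{n-1}=0}$. Write $g=g_0+x_{n-1}g_1+x_{n-1}^2(\cdots)$ with $g_0\in V_{n-1,d}$ and $g_1\in V_{n-1,d-1}$, and write $\bar A_i=A_i|_{x_{n-1}=0}$. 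A direct computation gives
\[
\bar D(g)=D'(g_0)+\bar A_{n-1}\,g_1,\qquad D':=\sum_{i=0}^{n-2}\bar A_i\partial_i .
\]
Here $D'$ is a linear vector field on $\PP^{n-2}$. Hence $\dim K_D=\dim V_{n,d}-r(D)$, where
\[
r(D)=\dim\bigl(D'(V_{n-1,d})+\bar A_{n-1}V_{n-1,d-1}\bigr).
\]
Estimate \eqref{eq:I-general-bound} is then equivalent to
\[
\dim\{D: r(D)\le r\}\le n^2-n+r\quad\text{for every } r\ge 0,
\]
and I will prove this stratum by stratum.

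\emph{Stratification.} I split into three strata.
\begin{enumerate}
\item If $\bar A_{n-1}\neq0$, then $r(D)\ge\dim V_{n-1,d-1}$. This is far larger than $n$ for $d\ge4$, so the whole $n^2$-dimensional open stratum is fine.
\item If $\bar A_{n-1}=0$ (a condition of codimension $n-1$) and $D'\neq0$, I need a lower bound on the rank of a nonzero linear vector field acting on $V_{n-1,d}$. Using the $\mathrm{GL}_{n-1}$-action I reduce $D'$ to Jordan normal form, and I note that $D'$ acting on degree-$d$ forms is the $d$-th symmetric power derivation of the matrix $(\bar A_i)$. Its kernel is the space of degree-$d$ invariants of a one-parameter subgroup, and its rank is at least roughly $\dim V_{n-1,d-1}$ minus a small correction. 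This gives $r(D)\ge n-1$, and in fact much more, on a stratum of dimension $n^2-n+1$. That suffices.
\item If $\bar A_{n-1}=0$ and $D'=0$, then every $A_i$ lies in $\CC\,x_{n-1}$ for $i\le n-2$ and $A_{n-1}\in\CC x_{n-1}$. This stratum has dimension $n$ and $r=0$, so the requirement is $n\le n^2-n$, which holds for $n\ge2$.
\end{enumerate}
The one place needing care is the intermediate strata in the second case where $D'$ is very degenerate, for instance nilpotent of rank one such as $x_0\partial_1$. Here I will compute the rank explicitly (for $x_0\partial_1$ the image is $x_0V_{n-1,d-1}$ up to an easy count) and check that it still dominates the codimension deficit.

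\emph{The case $(n,d)=(3,4)$.} Now $\dim V_{3,4}=15$ and $n^2-n=6$, so \eqref{eq:I-general-bound} gives $21$ and I need $19$. I will redo the count with explicit numbers, where $D'$ is a $2\times2$ linear vector field on binary quartics $V_{2,4}$ (dimension $5$).
\begin{enumerate}
\item The generic stratum has $r\ge\dim V_{2,3}=4$, giving $9+15-4=20$. This is too big, so I must sharpen it. When $\bar A_{n-1}\neq0$, the images of $D'$ and of $\bar A_2 V_{2,3}$ together should span all of $V_{2,4}$ for generic $D$, giving $r=5$ and total $19$. The sublocus where they fail to span is a proper closed subset, of dimension at most $8$, on which $r\ge4$, giving $8+11=19$.
\item If $\bar A_2=0$ (dimension $7$), then $r=\operatorname{rank}(D'|_{V_{2,4}})$. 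This rank is $4$ for semisimple $D'$ with distinct eigenvalues, $4$ for nilpotent $D'$, and lower only on small loci such as scalar matrices, where it is $5$ unless the eigenvalue is $0$. This keeps each piece at most $19$.
\end{enumerate}
I expect this exceptional bookkeeping, and the rank lower bound for degenerate $D'$ in the general case, to be the main obstacle. The rest is formal.
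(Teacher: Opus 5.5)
Your strategy is the paper's. You fibre $\mathcal I_{n,d}$ over $D$, identify the fibre with a kernel of dimension $\dim V_{n,d}-r(D)$, where $r(D)$ is the rank of $(b_0,b_1)\mapsto D'(b_0)+\bar A_{n-1}b_1$, and stratify by $\bar A_{n-1}\neq0$; $\bar A_{n-1}=0\neq D'$; $\bar A_{n-1}=D'=0$. For $(n,d)=(3,4)$ you also split the first stratum into rank $4$ versus rank $5$.

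For the general bound (1), the obstacle you anticipate is not there. On the $(n^2-n+1)$-dimensional stratum $\{\bar A_{n-1}=0,\ D'\neq0\}$ you only need $r\ge1$. This is immediate: $D'(\ell^d)=d\,\ell^{d-1}D'(\ell)\neq0$ for any $\ell$ with $D'(\ell)\neq0$. Your unproved claim $r\ge n-1$ and the Jordan-form analysis there are unnecessary.

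The real content is (2), and there the proposal asserts the two rank facts it needs without proving them, and misstates one.

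\emph{First gap.} You need $\{\bar A_2\neq0,\ \operatorname{rank}=4\}$ to be a proper closed subset, and "should span for generic $D$" needs a witness. Take $D'$ to be the Euler derivation. It acts on $V_{2,4}$ as $4\cdot\mathrm{id}$, so it cannot map $V_{2,4}$ into the hyperplane $\bar A_2V_{2,3}$. Irreducibility of $\operatorname{End}(V_{2,1})\times(V_{2,1}\setminus0)$ then gives dimension at most $5$ there, hence at most $8$ in $(V_{3,1})^3$.

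\emph{Second gap.} On $\{\bar A_2=0\}$ your list of ranks is inaccurate.
\begin{itemize}
\item A semisimple $D'$ with distinct eigenvalues generically acts on $V_{2,4}$ with rank $5$, not $4$.
\item The scalar case you flag as a possible drop in fact has rank $5$.
\item What you need is a uniform lower bound on this $7$-dimensional stratum ($r\ge3$ already suffices), and you never establish one.
\end{itemize}
The paper's computation supplies it. In a basis with $D'(u)=\alpha u$ and $D'(v)=\gamma u+\beta v$, the matrix of $D'$ on the monomials $u^{4-i}v^i$ is upper triangular with diagonal entries $(4-i)\alpha+i\beta$, for $0\le i\le4$. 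These five linear forms in $(\alpha,\beta)$ are pairwise non-proportional. So at most one vanishes when $(\alpha,\beta)\neq(0,0)$, which gives rank at least $4$. In the nonzero nilpotent case, $D'(u)=0$ and $D'(v)=u$, and the rank is exactly $4$. This gives $15+7-4=18$ on $\{\bar A_2=0,\ D'\neq0\}$ and $15+3=18$ on $\{\bar A_2=D'=0\}$. Together with your two counts of $19$ on the $\bar A_2\neq0$ pieces, this closes the bound $\dim\mathcal I_{3,4}\le19$.
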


\begin{proof}
We will estimate the dimension of \(\mathcal I_{n,d}\) using the projection
\[
\begin{aligned}
\pi\colon\mathcal I_{n,d}&\longrightarrow(V_{n,1})^n,\\
(g,h,D=\textstyle\sum A_i\partial_i)&\longmapsto(A_0,\dots,A_{n-1}).
\end{aligned}
\]
The fibre of \(\pi\) over \(D\) is exactly \(\ker M_D\). We first describe in a convenient way the kernel of $M_D$. The restriction of \(M_D\) to \(0\oplus V_{n,d-1}\) maps isomorphically
onto
\[
x_{n-1}V_{n,d-1}\subseteq V_{n,d}.
\]
Thus this part contributes \(\dim V_{n,d-1}\) to the rank. The remaining
contribution is the dimension of the image in $V_{n,d}/x_{n-1}V_{n,d-1} \simeq V_{n-1,d}$, that is we need to compute the dimension of the image of the linear map
\[
\begin{tikzcd}[row sep=small]
\phi_D\colon V_{n,d}\oplus V_{n,d-1}
  \arrow[r]
&V_{n,d}/x_{n-1}V_{n,d-1}\simeq V_{n-1,d},\\
(g,h)\arrow[r,mapsto]
&D(g)|_H,
\end{tikzcd}
\]
where $H:=\{x_{n-1}=0\}$.
 Set
\[
\delta_D:=
\sum_{i=0}^{n-2}(A_i|_H)\partial_i
\in \operatorname{Der_{\CC}{}}(V_{n-1})_0,
\qquad
a_D:=A_{n-1}|_H\in V_{n-1,1}.
\]
Here $\operatorname{Der}_{\CC}(V_{n-1})_0$ denotes the space of $\CC$-linear derivations of $V_{n-1}$ that preserve degree. Restriction to $V_{n-1,1}$ gives an isomorphism $\operatorname{Der}_{\CC}(V_{n-1})_0\simeq\operatorname{End}(V_{n-1,1})$, whose inverse is
\begin{equation}
\label{iso derivation}
\operatorname{End}_{\CC}(V_{n-1,1})
\xrightarrow{\sim}
\operatorname{Der}_{\CC}(V_{n-1})_0,
\qquad
T\longmapsto
D_T:=\sum_{i=0}^{n-2}T(x_i)\partial_i.
\end{equation}
In the following we switch freely between these two descriptions. Writing
\[
g=b_0+x_{n-1}b_1+x_{n-1}^2b_2+\cdots,
\qquad
b_k\in V_{n-1,d-k},
\]
we obtain
\[
D(g)|_H=\delta_D(b_0)+a_Db_1.
\]
Consider the linear map
\[
\Phi_{\delta_D,a_D}\colon V_{n-1,d} \oplus V_{n-1,d-1}  \rightarrow V_{n-1,d}, \quad  \quad (b_0,b_1) \rightarrow \delta_D(b_0) 
 +a_D(b_1)
\]
and set $r(\delta_D,a_D):=\rank\Phi_{\delta_D,a_D}$. To be more precise here, we are thinking $\delta_D \in \operatorname{Der}_{\CC}(V_{n-1})_0$ (via \eqref{iso derivation}), and restricting its action on $V_{n-1,d}$. Since the map
\[
V_{n,d}\longrightarrow V_{n-1,d}\oplus V_{n-1,d-1},
\qquad
g\longmapsto(b_0,b_1),
\]
is surjective, the rank of $\phi_D$ is \(r(\delta_D,a_D)\). Then
\[
\rank M_D=\dim V_{n,d-1}+r(\delta_D,a_D)
\tag{3}\label{eq:rank-MD-two-parts}
\]
and
\[
\begin{aligned}
\dim\ker M_D
&=\dim V_{n,d}+\dim V_{n,d-1}-\rank M_D\\
&=\dim V_{n,d}-r(\delta_D,a_D).
\end{aligned}
\tag{4}\label{eq:kernel-MD-two-parts}
\]
We write now the parameter space $(V_{n,1})^n$ as a disjoint union of locally closed subvarieties for which we are able to   estimate \(r(\delta_D,a_D)\). For this purpose, consider the linear map
\begin{equation}
\label{parametersmap}
\begin{aligned}
(V_{n,1})^n
&\longrightarrow \operatorname{End}(V_{n-1,1})\oplus V_{n-1,1}\\
D&\longmapsto(\delta_D,a_D).
\end{aligned}
\end{equation}
It is surjective and has \(n\)-dimensional kernel. We decompose the parameter space $(V_{n,1})^n$ into the inverse images of the three loci in $\operatorname{End}(V_{n-1,1})\oplus V_{n-1,1}$ defined by $a\ne0$, by $a=0$ and $\delta\ne0$, and by $a=\delta=0$. If $a\ne0$, multiplication by $a$ identifies $V_{n-1,d-1}$ with the subspace $aV_{n-1,d-1}\subset V_{n-1,d}$, so $r(\delta,a)\ge \dim V_{n-1,d-1}$; using also \eqref{parametersmap} we find that  the inverse image of this locus in $(V_{n,1})^n$ has dimension $n(n-1)+n=n^2$. If $a=0$ and $\delta\ne0$, then $r(\delta,0)\ge1$: indeed, choose $\ell\in V_{n-1,1}$ with $\delta(\ell)\ne0$, and then $\delta(\ell^d)=d\ell^{d-1}\delta(\ell)\ne0$. The inverse image of this  locus has dimension $n+(n-1)^2=n^2-n+1$. Finally, if $a=\delta=0$, then $r(\delta,a)=0$ and the corresponding locus has dimension $n$. 

Consequently, the dimensions of the parts of $\mathcal I_{n,d}$ lying  over these three strata are bounded from above respectively by
\[
   \dim V_{n,d}+n^2-\dim V_{n-1,d-1},\qquad
   \dim V_{n,d}+n^2-n,\qquad
   \dim V_{n,d}+n.
\]
Since $\dim V_{n-1,d-1}\ge n$, all three dimensions are at most $\dim V_{n,d}+n^2-n$. Hence
\[
\dim\mathcal I_{n,d}\le\dim V_{n,d}+n^2-n.
\]
This proves \eqref{eq:I-general-bound}. Now we consider the case $(n,d)=(3,4)$. In this situation
\[
\dim V_{3,4}=15,
\qquad
\dim V_{2,3}=4,
\qquad
\dim V_{2,4}=5.
\]
We want to give a better bound for $\operatorname{dim}(\mathcal{I}_{3,4})$. For $(\delta,a) \in \operatorname{End}(V_{2,1}) \oplus V_{2,1}$ we consider as before
\[
\Phi_{\delta,a}\colon V_{2,4} \oplus V_{2,3} \rightarrow V_{2,4}.
\]

As above, if $a\ne0$, then $aV_{2,3}\subset\operatorname{im}\Phi_{\delta,a}$ has dimension $4$. Since $\dim V_{2,4}=5$, the rank of $\Phi_{\delta,a}$ is therefore either $4$ or $5$. Thus we further stratify the locus $\{a \neq 0\}\subset \operatorname{End}(V_{2,1}) \oplus V_{2,1} $ as $T_4 \sqcup T_5$,
where for $r\in\{4,5\}$,  we set
\[
T_r:=
\left\{
(\delta,a)\in
\operatorname{End}(V_{2,1})\times
\bigl(V_{2,1}\setminus\{0\}\bigr)
\ \middle|\
\rank\Phi_{\delta,a}=r
\right\}.
\]
We denote by $S_4$ and $S_5$ the inverse images via \eqref{parametersmap}, namely
\[
   S_r:=\{D\in(V_{3,1})^3\mid a_D\ne0,\
   \rank\Phi_{\delta_D,a_D}=r\}.
\]
Since $\dim S_5\leq \operatorname{dim}(V_{3,1})^3=9$, we have
\[
   \dim\{(g,h,D) \in \mathcal{I}_{3,4}\mid D\in S_5\}
   \le\dim V_{3,4}+9-5=\dim V_{3,4}+4=19.
\]
Now we need to bound the dimension of $S_4$. On $S_4$ the composite
\[
\begin{tikzcd}
V_{2,4} \arrow[r, "\delta"] & V_{2,4} \arrow[r] & V_{2,4}/aV_{2,3}
\end{tikzcd}
\]
vanishes. For fixed $a\ne0$ this is a nontrivial linear condition on $\delta$. In fact, choosing $\delta=x_0\partial_{x_0}+x_1\partial_{x_1}$,  one sees that it acts on $V_{2,4}$ as multiplication by $4$ and then does not
map all of $V_{2,4}$ into the proper subspace $aV_{2,3}$. In other words $T_4$ is a proper closed subset of the irreducible space $\operatorname{End}(V_{2,1})\times
\bigl(V_{2,1}\setminus\{0\})$. Hence its dimension is $\leq 5$. Thus $\operatorname{dim}(S_4) \leq 8$, and 
\begin{align*}
   \dim\{(g,h,D) \in \mathcal{I}_{3,4} \mid D\in S_4\}
   &\le\dim V_{3,4}+8-4\\
   &=\dim V_{3,4}+4=19.
\end{align*}
Now suppose \(a=0\) and \(\delta\neq0\). We claim that
\[
\rank\bigl(\delta\colon V_{2,4}\longrightarrow V_{2,4}\bigr)\ge4.
\]
As usual, recall that  $\delta  \in \operatorname{End}(V_{2,1}) \simeq \operatorname{Der}_{\CC}(V_2)_0$ (via the isomorphism \eqref{iso derivation}). Here we are considering the restriction of its action on $V_{2,4}$. To understand its  rank we relate the eigenvalues on  $V_{2,1}$ to the ones in $V_{2,4}$.  By the Jordan normal form, we may choose a basis \(u,v\) of
\(V_{2,1}\) such that
\[
\delta(u)=\alpha u,
\qquad
\delta(v)=\gamma u+\beta v.
\]
Here  \(\alpha\) and \(\beta\) are its eigenvalues, counted with
multiplicity. Using \eqref{iso derivation} one sees that  the associated derivation is determined by the Leibniz
rule
\[
D_\delta(\ell_1\ell_2\ell_3\ell_4)
=
\sum_{j=1}^4
\ell_1\cdots\ell_{j-1}\delta(\ell_j)
\ell_{j+1}\cdots\ell_4.
\]
Applying this formula to the monomial \(u^{4-i}v^i\), for
\(0\le i\le4\), gives
\[
\begin{aligned}
\delta|_{V_{2,4}}(u^{4-i}v^i)
&=(4-i)u^{3-i}v^i\delta(u)
  +i u^{4-i}v^{i-1}\delta(v)\\
&=(4-i)\alpha u^{4-i}v^i
  +i u^{4-i}v^{i-1}(\gamma u+\beta v)\\
&=\bigl((4-i)\alpha+i\beta\bigr)u^{4-i}v^i
  +i\gamma u^{5-i}v^{i-1}.
\end{aligned}
\]
Therefore, with respect to the ordered basis $\{u^4, u^3v, u^2v^2, uv^3,v^4\}$ of $V_{2,4}$, the matrix of $\delta|_{V_{2,4}}$ is upper triangular, and its eigenvalues are
\[
4\alpha,\qquad
3\alpha+\beta,\qquad
2\alpha+2\beta,\qquad
\alpha+3\beta,\qquad
4\beta.
\]
Thus
\[
   \det(\delta|_{V_{2,4}})
   =\prod_{i=0}^4\bigl((4-i)\alpha+i\beta\bigr).
\]
If $(\alpha,\beta)\ne(0,0)$, at most one factor in this product vanishes, so $\delta|_{V_{2,4}}$ has rank $4$ or $5$. It remains to consider the case $
\alpha=\beta=0.$ Since \(\delta\neq0\), the endomorphism \(\delta\) is nonzero
nilpotent. By the Jordan normal form, after rescaling \(v\), we may
assume
\[
\delta(u)=0,
\qquad
\delta(v)=u.
\]
The preceding formula then becomes
\[
\delta|_{V_{2,4}}(u^{4-i}v^i)
=
i\,u^{5-i}v^{i-1}.
\]
For \(i=0\), the image is zero, while the four images for $1\le i\le4$ are linearly independent. Hence
$\rank\delta|_{V_{2,4}}=4$.  We conclude that, for every nonzero
\(\delta\in\operatorname{End}(V_{2,1})\), $\rank\bigl(\delta|_{V_{2,4}})\ge4$ as desired. Therefore
\begin{align*}
   \dim\{(g,h,D) \in \mathcal{I}_{3,4}\mid a_D=0,\ \delta_D\ne0\}
   &\le\dim V_{3,4}+7-4\\
   &=\dim V_{3,4}+3=18.
\end{align*}
Finally, the locus $a=\delta=0$ has dimension $3$ and $r(0,0)=0$, so its corresponding incidence space also has dimension at most $\dim V_{3,4}+3=18$.
\end{proof}

\begin{proposition}\label{prop:strata-two-parts}
Let \(n\ge3\) and \(d\ge4\). For \(g\in V_{n,d}\), with Jacobian ideal
\[
J_g=(\partial_0g,\dots,\partial_{n-1}g),
\]
consider the incidence correspondence
\[
B_{n,d}:=
\left\{(g,h) \in V_{n,d}\times V_{n,d-1}\ \middle|\
\exists\lambda\in V_{n,1}\smallsetminus\{0\},\
\lambda h\in J_g
\right\}.
\]
Then every irreducible component \(Z\) of the closure
\(\overline{B_{n,d}}\subseteq V_{n,d}\times V_{n,d-1}\) satisfies
\[
\codim\bigl(Z,V_{n,d}\times V_{n,d-1}\bigr)>n.
\]
\end{proposition}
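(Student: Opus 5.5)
The plan is to realise $B_{n,d}$ as the image of an incidence variety whose dimension is controlled by \Cref{lem:incidence-bound}, and then to compare dimensions.

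\textbf{The incidence variety.} The condition $\lambda h\in J_g$ in degree $d$ means there are linear forms $A_0,\dots,A_{n-1}\in V_{n,1}$ with $\lambda h=\sum_i A_i\partial_i g$. Equivalently, there is a derivation $D=\sum A_i\partial_i$ with $D(g)=\lambda h$. So I consider
\[
\mathcal W:=\bigl\{(g,h,\lambda,D)\in V_{n,d}\times V_{n,d-1}\times (V_{n,1}\smallsetminus\{0\})\times (V_{n,1})^n \ \bigm|\ D(g)=\lambda h\bigr\}.
\]
Its projection to $V_{n,d}\times V_{n,d-1}$ has image exactly $B_{n,d}$.

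\textbf{Bounding $\dim\mathcal W$.} Project $\mathcal W$ to the $\lambda$-coordinate. The group $GL_n$ acts on everything compatibly and transitively on $V_{n,1}\smallsetminus\{0\}$. Hence every fibre over a $\lambda$ is isomorphic to the fibre over $\lambda=x_{n-1}$, and that fibre is precisely $\mathcal I_{n,d}$. Therefore
\[
\dim\mathcal W\le n+\dim\mathcal I_{n,d}.
\]
By \eqref{eq:I-general-bound} this gives $\dim\mathcal W\le \dim V_{n,d}+n^2$. In the case $(n,d)=(3,4)$, \eqref{eq:I-exceptional-bound} gives instead $\dim\mathcal W\le 22$.

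\textbf{Fibres over $B_{n,d}$.} The scaling $(\lambda,D)\mapsto(c\lambda,cD)$ with $c\in\CC^*$ preserves the fibres of $\mathcal W\to B_{n,d}$. So every nonempty fibre has dimension at least $1$. This is the only lower bound I will use, since for singular $g$ one cannot say more about the kernel of $D\mapsto D(g)$.

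Consequently every irreducible component of $\overline{B_{n,d}}$ has dimension at most $\dim\mathcal W-1$. Taking closures does not change dimension, and each component is dominated by a component of $\mathcal W$. This yields
\[
\codim\bigl(Z,V_{n,d}\times V_{n,d-1}\bigr)\ \ge\ \dim V_{n,d-1}-n^2+1,
\]
and in the exceptional case $\codim Z\ge 25-21=4>3$.

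\textbf{The numerical check.} It remains to verify that $\binom{n+d-2}{n-1}\ge n^2+n$ whenever $n\ge3$, $d\ge4$ and $(n,d)\ne(3,4)$.
\begin{itemize}
\item For $d=4$, the inequality reads $\tfrac{(n+2)(n+1)n}{6}\ge n(n+1)$, i.e.\ $n+2\ge 6$, which holds for $n\ge4$.
\item The left-hand side is increasing in $d$, so the case $n\ge4$ and all $d\ge4$ follows.
\item For $n=3$ and $d\ge5$, we have $\binom{d+1}{2}\ge15>12$.
\end{itemize}
This is also why the exceptional case needs the sharper bound: with the general estimate, $(3,4)$ would only give codimension $\ge 10-9+1=2$, so \eqref{eq:I-exceptional-bound} is essential there.

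\textbf{Main obstacle.} The dimension-theoretic step is where I expect the difficulty. One must justify that $\dim\mathcal W\le n+\dim\mathcal I_{n,d}$ via the homogeneity under $GL_n$, and that every component of $\overline{B_{n,d}}$ has dimension at most $\dim\mathcal W-1$, using only the $\CC^*$-scaling on fibres. Everything else is bookkeeping with binomial coefficients.
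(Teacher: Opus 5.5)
Your proposal is correct and follows essentially the paper's route: use $\mathrm{GL}_n$-homogeneity to reduce to the fibre over $x_{n-1}$, bound that fibre by $\dim\mathcal I_{n,d}$ via \Cref{lem:incidence-bound}, and finish with the same binomial check, using the sharper bound $19$ in the case $(n,d)=(3,4)$. The only difference is cosmetic: the paper parametrises $[\lambda]\in\PP(V_{n,1})$ and notes that $\mathcal I_{n,d}$ surjects onto the fibre, whereas you keep $\lambda\in V_{n,1}\smallsetminus\{0\}$ and $D$ in the incidence and recover the lost dimension from the $\CC^*$-scaling. That scaling argument is valid because the connected group $\CC^*$ preserves each irreducible component of $\mathcal W$.
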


\begin{proof}
Introduce
\[
\widetilde B_{n,d}:=
\left\{
(g,h,[\lambda])
\in V_{n,d}\times V_{n,d-1}\times\PP(V_{n,1})
\ \middle|\ \lambda h\in J_g
\right\}.
\]
By Chevalley's theorem, $\widetilde B_{n,d}$ is constructible: on each standard affine chart of $\PP(V_{n,1})$, it is the projection of the closed locus obtained by adjoining linear forms $A_i$ and imposing $\lambda h=\sum_i A_i\partial_i g$. Thus it is a finite union of
locally closed subsets. Taking the irreducible components of each of
these locally closed subsets, we obtain a finite decomposition
\[
\widetilde B_{n,d}=\bigcup_{\alpha=1}^s S_\alpha
\]
into irreducible locally closed subsets. Let
\[
p\colon\widetilde B_{n,d}\longrightarrow\PP(V_{n,1})
\]
be the natural projection, and put
\[
Z_\alpha:=\overline{p(S_\alpha)}
\subseteq\PP(V_{n,1}).
\]
Then
\[
\dim S_\alpha
\le \dim Z_\alpha+
  \sup_{[\lambda]\in p(S_\alpha)}
  \dim\bigl( p^{-1}([\lambda])\bigr) \leq n-1 +  \sup_{[\lambda]\in p(S_\alpha)}
  \dim\bigl( p^{-1}([\lambda])\bigr) 
\]
By $\mathrm{PGL}_n$-transitivity, all the fibres
of \(p\) are isomorphic. Thus, for every \([\lambda]\),
\[
\dim p^{-1}([\lambda])
=\dim p^{-1}([x_{n-1}])
=:\dim(\widetilde B_{n,d})_{[x_{n-1}]}.
\]
Then every \(S_\alpha\) satisfies
\[
\dim S_\alpha
\le(n-1)+\dim(\widetilde B_{n,d})_{[x_{n-1}]}.
\tag{11}\label{eq:total-from-one-fibre-two-parts}
\]

We now bound the fibre over \([x_{n-1}]\). Let
\[
\mathcal I_{n,d}=
\left\{
(g,h,D)\in
V_{n,d}\times V_{n,d-1}\times(V_{n,1})^n
\ \middle|\ D(g)-x_{n-1}h=0
\right\}
\]
be the closed incidence variety of Lemma~\ref{lem:incidence-bound}.
Projection onto the first two factors maps \(\mathcal I_{n,d}\) onto
\[
(\widetilde B_{n,d})_{[x_{n-1}]}
=
\left\{
(g,h)\ \middle|\ x_{n-1}h\in(J_g)_d
\right\}.
\]
Consequently,
\[
\dim(\widetilde B_{n,d})_{[x_{n-1}]}
\le\dim\mathcal I_{n,d}.
\tag{12}\label{eq:projection-I-two-parts}
\]
Lemma~\ref{lem:incidence-bound} therefore gives
\[
\dim(\widetilde B_{n,d})_{[x_{n-1}]}
\le\dim V_{n,d}+n^2-n.
\tag{13}\label{eq:fixed-fibre-general}
\]
Combining \eqref{eq:total-from-one-fibre-two-parts} and
\eqref{eq:fixed-fibre-general}, we obtain, for every \(\alpha\),
\[
\dim S_\alpha\le\dim V_{n,d}+n^2-1.
\]
It follows  that every
irreducible component of \(\overline{\widetilde B_{n,d}}\) has dimension
at most
\[
\dim V_{n,d}+n^2-1.
\]
The projection to $V_{n,d}\times V_{n,d-1}$ is proper, so the image of $\overline{\widetilde B_{n,d}}$ is closed and contains $\overline{B_{n,d}}$. It follows that every irreducible component $Z$ of $\overline{B_{n,d}}$ satisfies
\[
\codim\bigl(Z,V_{n,d}\times V_{n,d-1}\bigr)
\ge\dim V_{n,d-1}-n^2+1.
\tag{17}\label{eq:component-bound-two-parts}
\]

If \(n\ge4\), then
\[
\dim V_{n,d-1}\ge\dim V_{n,3}
=\binom{n+2}{3}>n^2+n-1,
\]
where
\[
\binom{n+2}{3}-(n^2+n-1)
=\frac{n(n+1)(n-4)}6+1>0.
\]
Thus the right-hand side of \eqref{eq:component-bound-two-parts} is
greater than \(n\). If \(n=3\) and \(d\ge5\), then
\[
\dim V_{3,d-1}=\binom{d+1}{2}>11.
\]
Again, the right-hand side of \eqref{eq:component-bound-two-parts} is
greater than \(3\). It remains to consider \((n,d)=(3,4)\). In this case the stronger estimate
\eqref{eq:I-exceptional-bound} and \eqref{eq:projection-I-two-parts} gives
\[
\dim(\widetilde B_{3,4})_{[x_2]}\le19.
\]
The same argument as above then shows that every
irreducible component \(Z\) of \(\overline{B_{3,4}}\) has dimension at
most \(21\). Since
\[
\dim(V_{3,4}\times V_{3,3})=15+10=25,
\]
we conclude that
\[
\codim\bigl(Z,V_{3,4}\times V_{3,3}\bigr)
\ge25-21=4>3.
\]
\end{proof}
We now finally come to the proof of the first main theorem.
\begin{theorem}\label{thm:vanishing}
Let $X \subset \PP^n$ be a general hypersurface of degree $d \ge 4$, where $n\ge3$. Then for every smooth hyperplane section $Y = X \cap \{L=0\}$, we have 
\[
\HH^0(Y, T_X|_Y) = 0.
\]
\end{theorem}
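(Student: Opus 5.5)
We will argue by a dimension count on an incidence correspondence between hypersurfaces and hyperplanes, reducing the statement to \Cref{prop:strata-two-parts}. Let $U\subset V_{n+1,d}$ be the open set of equations of smooth hypersurfaces, and consider
\[
\mathcal W:=\bigl\{(f,[L])\in U\times\check{\PP}^n \ \big|\ X_f\cap\{L=0\}\text{ smooth and }\HH^0(Y,T_{X_f}|_Y)\neq0\bigr\}.
\]
It suffices to show that $\dim\mathcal W<\dim V_{n+1,d}$. Then the projection of $\mathcal W$ to $U$ is not dominant. Since $\mathcal W$ is constructible, the closure of its image is a proper closed subset, and a general $f$ lies outside it.

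\textbf{Step 1: fibres over $\check{\PP}^n$.} By $\mathrm{PGL}_{n+1}$-equivariance, all fibres of $\mathcal W\to\check{\PP}^n$ are isomorphic. So $\dim\mathcal W\le n+\dim\mathcal W_{[x_n]}$, and we only need $\operatorname{codim}(\mathcal W_{[x_n]},V_{n+1,d})>n$.

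\textbf{Step 2: reduction to the pairs $(g,h)$.} Consider the linear map
\[
\psi\colon V_{n+1,d}\to V_{n,d}\times V_{n,d-1},\qquad f\mapsto (g,h)=\bigl(f|_{x_n=0},\ \partial_nf|_{x_n=0}\bigr).
\]
It is surjective: writing $f=g+x_nh+x_n^2(\cdots)$, the components $g$ and $h$ can be chosen independently. By \Cref{lem:criterion} (which applies since $d\ge4$), $f\in\mathcal W_{[x_n]}$ exactly when $Y$ is smooth and some nonzero $\lambda\in V_{n,1}$ satisfies $\lambda h\in J_g$. Hence
\[
\mathcal W_{[x_n]}\subseteq\psi^{-1}(B_{n,d}).
\]
Here we use that $R_{g,1}=V_{n,1}$ because $d-1\ge2$. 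Since $\psi$ is a surjective linear map, preimages preserve codimension: every component of $\psi^{-1}(\overline{B_{n,d}})$ has the same codimension as the corresponding component of $\overline{B_{n,d}}$. By \Cref{prop:strata-two-parts} this codimension exceeds $n$.

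\textbf{Step 3: conclusion.} Steps 1 and 2 give
\[
\dim\mathcal W\le n+\dim V_{n+1,d}-(n+1)<\dim V_{n+1,d},
\]
which is the required bound.

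Most of the difficulty has already been absorbed into \Cref{prop:strata-two-parts}, including the delicate case $(n,d)=(3,4)$ handled by the sharper bound in \Cref{lem:incidence-bound}. What remains is bookkeeping. The one point needing care is the equivariance in Step 1: a projective transformation carrying $[L]$ to $[x_n]$ acts on $V_{n+1,d}$ and preserves both smoothness and the vanishing condition. With that checked, the fibre dimension is constant, and constructibility of $\mathcal W$ together with Chevalley's theorem justifies the dimension estimate.
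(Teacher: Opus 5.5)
Your proof is correct and follows essentially the same route as the paper. Both arguments use an incidence correspondence over the dual projective space, reduce by $\mathrm{PGL}_{n+1}$-homogeneity to the fibre over $[x_n]$, pass through the surjective map $f\mapsto(g,h)$ via \Cref{lem:criterion}, and finish with the codimension bound of \Cref{prop:strata-two-parts}. The only difference, which is cosmetic, is that you work on the affine space $V_{n+1,d}$ rather than on $\PP(V_{n+1,d})$.
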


\begin{proof}
Consider the following subset of $\PP(V_{n+1,d})\times(\PP^n)^\vee$
\begin{align*}
 \Rc:=\Biggl\{([f],[L])\mid {}&  X=\{f=0\} \ \text{and} \ 
 Y=X\cap\{L=0\}\ \\ & \text{are smooth and }
 \HH^0(Y,T_X|_Y)\neq0\Biggr\}.
\end{align*}
If $\Rc=\varnothing$, there is nothing to prove.
Fix $[L]=[x_n]$ and consider the first-order restriction map
\[
\begin{tikzcd}[row sep=small]
V_{n+1,d} \arrow[r, "\rho_L"] & V_{n,d}\times V_{n,d-1},\\
f=g+x_nh+x_n^2q \arrow[r, mapsto] & (g,h),
\end{tikzcd}
\]
where $g=f|_{x_n=0}$ and $h=\partial_nf|_{x_n=0}$. 
\Cref{lem:criterion} says that
\[
 \Rc_{[L]}
 =\bigl\{[f]\in\PP(V_{n+1,d})\mid
  f \ \text{and} \ g\  \text{are smooth and} \ \rho_L(f)\in B_{n,d}\bigr\}.
\]
The map $\rho_L$ is surjective with kernel $x_n^2V_{n+1,d-2}$. Since $\Rc_{[L]} \subset \{[f]:\rho_L(f) \in B_{n,d}\}$,  \Cref{prop:strata-two-parts}
implies that 
\[
\dim\Rc_{[L]}<\dim\PP(V_{n+1,d})-n.
\]
The action of $\mathrm{PGL}_{n+1}$ preserves $\Rc$ and is transitive on
$(\PP^n)^\vee$, so all fibres of $\pr_2\colon\Rc\to(\PP^n)^\vee$ are
isomorphic. We thus have
\[
 \dim\Rc=n+\dim\Rc_{[L]}<\dim\PP(V_{n+1,d})
\]
and so $\overline{\pr_1(\Rc)}$ is a proper closed subset of
$\PP(V_{n+1,d})$, and every smooth hyperplane section of a general
hypersurface satisfies $\HH^0(Y,T_X|_Y)=0$.
\end{proof}

\begin{remark}
\label{remarkunramifiedness}
When $n \geq 3$, $d \geq 3$, and $d \geq 4$ if $n=3$, the space $\HH^0(Y,T_X|_Y)$ considered in the previous theorem has a classical interpretation in terms of the kernel of the differential of the moduli map
\[
|\mathcal{O}_X(1)|_{\sm} \rightarrow \mathcal{M},
\]
where $\mathcal{M}$ is the moduli space parametrising isomorphism classes of hypersurfaces of degree $d$ in $\PP^{n-1}$ (see, for example, \cite[Section~1]{BE}). Hence the result implies that for a general hypersurface $X \subset \mathbb{P}^n$ the associated moduli map is unramified at every point of $|\mathcal{O}_X(1)|_{\sm}$.
\end{remark}

\begin{remark}
This is closely related to Beauville's result on cubic threefolds \cite{BeauvilleCubic}. For every smooth cubic threefold $X\subset\PP^4$, he proves that the hyperplane-section map
\[
\begin{tikzcd}[row sep=small]
(\PP^4)^\vee\smallsetminus X^\vee \arrow[r, "s_X"]
& \Mmod_{\mathrm{cub}},\\
{[L]} \arrow[r, mapsto] & {[X\cap\{L=0\}]},
\end{tikzcd}
\]
where $\Mmod_{\mathrm{cub}}$ denotes the moduli space of smooth cubic surfaces, is dominant and \'etale at a general $[L]$. More precisely, $s_X$ is \'etale at $[L]$ if and only if $\cdot L\colon R_{F,2}\to R_{F,3}$ is injective, where $R_F$ is the Jacobian ring of $X$. Both spaces have dimension $10$, and the weak Lefschetz property used in \cite{BeauvilleCubic} shows that the resulting determinant is a nonzero form of degree $10$ on $(\PP^4)^\vee$. Its zero divisor cannot be supported on the dual hypersurface $X^\vee$, which has degree $24$. Hence every smooth cubic threefold has some smooth hyperplane section at which $s_X$ ramifies. Thus generic vanishing holds for cubic threefolds, but the vanishing for \emph{every} smooth hyperplane section fails.

The degree bound is therefore sharp when $n\in\{3,4\}$. The argument above makes no claim about cubic hypersurfaces for $n\ge5$; Beauville likewise notes that the required weak Lefschetz statement remains open in those dimensions \cite[Remark~1]{BeauvilleCubic}. Nevertheless, generic finiteness for a general smooth cubic in these dimensions has since been proved by a different argument \cite[Corollary~6.6]{IlardiNasrollahTafazolian}.

The corresponding vanishing theorem already fails for cubic surfaces: if $X\subset\PP^3$ is a smooth cubic surface and $Y$ is a smooth hyperplane section, then $Y$ is elliptic and $\deg(T_X|_Y)=3$, so Riemann--Roch gives $h^0(Y,T_X|_Y)\ge\chi(T_X|_Y)=3$.
\end{remark}

\begin{example}\label{ex:quarticexamples}
The generality hypothesis in \Cref{thm:vanishing} is necessary, as the following two smooth quartic K3 surfaces $S\subset\PP^3$ of high Picard rank illustrate.

(1) Let $S = \{x_0^4+x_1^4+x_2^4+x_3^4=0\}$ be the Fermat quartic. For the coordinate hyperplane section $C = S \cap \{x_3=0\} = \{x_0^4+x_1^4+x_2^4=0\}$, the equation is $g = x_0^4+x_1^4+x_2^4$, and the transverse derivative is $h = \partial_3 f|_{x_3=0} = 0$. The multiplication map $\cdot \bar{h} \colon R_{g,1} \to R_{g,4}$ is therefore the zero map, which has rank $0$. Consequently, $\dim \HH^0(C, T_S|_C) = 3 - 0 = 3$. This is the maximum possible dimension, corresponding geometrically to the existence of an isotrivial pencil of hyperplane sections $x_3 = t x_i$ for $i=0,1,2$ (see \cite[Examples 3.5 and 6.20]{Bertsch}). By contrast, for a general hyperplane section of $S$ (e.g.,\ $x_0+x_1+x_2+x_3=0$), one can check that $\dim \HH^0(C, T_S|_C) = 0$ (see \cite[Theorem 6.21]{Bertsch}).

(2) We note a similar example achieving $\dim \HH^0(C, T_S|_C)=1$. Let $f=x_0^4+x_1^4+x_2^4+x_0x_2^2x_3+x_3^4$ and $S_0=\{f=0\}\subset\PP^3$. Then $S_0$ is a smooth quartic K3, and $C_0=S_0\cap\{x_3=0\}=\{x_0^4+x_1^4+x_2^4=0\}$ is the Fermat plane quartic. In this case $g=x_0^4+x_1^4+x_2^4$, with $\Jf=(x_0^3,x_1^3,x_2^3)$, and $h=x_0x_2^2$. Taking $\lambda=x_2$ gives $\lambda h=x_0x_2^3\in(x_2^3)\subseteq\Jf$, so $\HH^0(C_0,T_{S_0}|_{C_0})\neq0$. The map $\cdot\bar h\colon R_{g,1}\to R_{g,4}$ sends 
\[
\begin{tikzcd}[row sep=small]
x_0 \arrow[r, mapsto] & \overline{x_0^2x_2^2},\\
x_1 \arrow[r, mapsto] & \overline{x_0x_1x_2^2},\\
x_2 \arrow[r, mapsto] & 0.
\end{tikzcd}
\]
The two nonzero images are distinct basis monomials, so $\rank(\cdot\bar h)=2$ and $\dim \HH^0(C_0,T_{S_0}|_{C_0})=3-2=1$ and so $[C_0]$ is a ramification point of the moduli map.
\end{example}

\section{Low degree K3s}\label{sec:low-degree-k3}

We now turn our focus to primitively polarised K3 surfaces of degree $2$, where the geometry of the sextic comes into play. We then apply the preceding results and complementary geometric methods to K3 surfaces of degree $4$.

\subsection{Degree two K3s}

\subsubsection{Finiteness}

In this subsection $(S,H)$ is a principally polarised K3 surface of degree $2$ and Picard rank one, i.e.,\ $H^2=2$ and $\Pic S\cong\ZZ[H]$. Such a surface is a double cover
\[
\begin{tikzcd}
S \arrow[r, "f"] & \PP^2
\end{tikzcd}
\]
ramified along a plane sextic curve $B\subset\PP^2$, with $f^*\OO_{\PP^2}(1)=H$. By the general theory of cyclic covering, we know that the sextic is smooth if and only if $S$ is.

We show first that the moduli map $\mu\colon|H|_{\sm}\to\Mmod_2$ is quasi-finite on the smooth locus. Quasi-finiteness of $\mu$ for the primitive polarisation of a non-uniruled surface of Picard rank one is established in recent work of Chen--Gounelas \cite{CG}; here we give a geometric proof in the degree-two case.

Since $f_*\OO_S\cong\OO_{\PP^2}\oplus\OO_{\PP^2}(-3)$, we have $\HH^0(S,H)\cong\HH^0(\PP^2,\OO_{\PP^2}(1))$, so that every curve in $|H|$ is the pullback $f^*\ell$ of a line $\ell\subset\PP^2$; thus $|H|\cong\Pdual$. For a line $\ell$ transverse to $B$, the curve $f^*\ell$ is the double cover of $\ell\cong\PP^1$ branched at the six points $\ell\cap B$, a smooth curve of genus $2$.

A line $\ell$ meets $B$ at a point with multiplicity $m\ge2$ if and only if $f^*\ell$ has a singularity above that point: in local analytic coordinates, if $y$ defines the sextic near a point $p$ and $\ell$ meets $B$ there with multiplicity $m$, then $f^*\ell$ is given by $t^2-x^m$ near the preimage of $p$, an $A_{m-1}$ singularity. In particular $f^*\ell$ is smooth if and only if $\ell$ is transverse to $B$.

Recall the following theorem (see also \cite[Theorem 10.2]{casalainamartin}).

\begin{theorem}[{\cite[\S6.2.1--6.2.2]{hassett00}, \cite[Example 4.1]{cml12}}]\label{thm:hassett}
    The stable reduction of a curve with an $A_k$ singularity is the transverse union of the normalisation of the curve with a hyperelliptic tail of genus $\lfloor k/2\rfloor$, meeting in one or two points depending on whether $k$ is even or odd.
\end{theorem}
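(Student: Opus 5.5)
The plan is to reduce to the local model and compute the stable limit explicitly by a weighted blow-up after base change. Since stable reduction is local around the singular point (away from $p$ the curve is already nodal or smooth and stays unchanged), it suffices to take a one-parameter smoothing $\mathcal C\to\Delta$ of a curve $C_0$ with a single $A_k$ point $p$. Analytically near $p$ it is given by $y^2=x^{k+1}+t\,\phi(x,y,t)$. After a finite base change $t=s^N$ and an analytic change of coordinates, I would bring the family to the quasi-homogeneous normal form
\[
y^2 = x^{k+1} + s^{a}\,\bigl(c_{k-1}x^{k-1}+\dots+c_0\bigr)+(\text{higher order}),
\]
chosen so that assigning weights $\mathrm{wt}(x)=2$, $\mathrm{wt}(y)=k+1$, $\mathrm{wt}(s)=2$ (after suitable rescaling of weights) makes the leading part weighted homogeneous.

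The second step is to perform the weighted blow-up of the total space at $p$ with these weights. The exceptional divisor is a weighted projective plane $\PP(1,w_x,w_y)$ (or a quotient of it). The strict transform of the special fibre meets it in the curve
\[
T=\{\,y^2=x^{k+1}+c_{k-1}x^{k-1}z^{\dots}+\dots+c_0z^{\dots}\,\}.
\]
For general coefficients this is a smooth double cover of $\PP^1$ branched at $k+1$ points, together with the point at infinity when $k+1$ is odd. It therefore has genus $\lfloor k/2\rfloor$ and is hyperelliptic. The new central fibre is then $\widetilde C_0\cup T$, where $\widetilde C_0$ is the normalisation of $C_0$ at $p$. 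The two components meet at the points of $T$ at infinity, which correspond to the branches of the $A_k$ singularity. There is one branch when $k$ is even, where $y^2=x^{k+1}$ is unibranch, and two when $k$ is odd, where $y=\pm x^{(k+1)/2}$; this gives one or two attaching points. Next, resolve the finite quotient singularities of the total space that arise from the weighted blow-up, and check that the resulting chains of rational curves are contracted by stabilisation. Then verify that the central fibre is nodal with the stated components: $\widetilde C_0$ keeps its genus, and $T$ has genus $\lfloor k/2\rfloor$ with one marked point (a Weierstrass point, at infinity) or two conjugate marked points.

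The main obstacle is the first step, namely justifying the normal form and the choice of base change for an \emph{arbitrary} smoothing, rather than a generic one. In degenerate cases the leading quasi-homogeneous polynomial on $T$ could be singular, in which case $T$ would itself need further stable reduction. I would handle this by arguing inductively on $k$, as in Hassett's treatment: choose the base-change order via the Newton polygon of $\phi$ so that the first nonzero lower-order term is weighted-homogeneous of the same degree. Then observe that any remaining singularities of $T$ are again of type $A_{k'}$ with $k'<k$, and that resolving them only modifies the tail while preserving its hyperelliptic nature and its total arithmetic genus $\lfloor k/2\rfloor$. This shows the tail is a (possibly reducible, stable) hyperelliptic curve of genus $\lfloor k/2\rfloor$ attached as claimed, which is exactly the statement needed for quasi-finiteness in the subsequent argument.
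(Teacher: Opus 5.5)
The paper gives no argument of its own here; it cites Hassett and Casalaina-Martin--Laza. Your local computation is their method: put the smoothing in the versal form $y^2=x^{k+1}+\sum_{i\le k-1}a_i(t)x^i$, base change, read the weights off the first edge of the Newton polygon, do a weighted blow-up, take the tail to be the exceptional weighted double cover, and induct on $k$ for degenerate smoothings. Your weakening to a possibly singular stable tail of arithmetic genus $\lfloor k/2\rfloor$ for special smoothings is correct and necessary.

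Two local slips should be fixed:
\begin{enumerate}
\item With weights $(1,\lambda,(k+1)\lambda/2)$ on $(s,x,y)$, made integral by a further base change, the leading part is $x^{k+1}+\sum_{i\in I}c_i s^{(k+1-i)\lambda}x^i$. It is not $s^a(c_{k-1}x^{k-1}+\dots+c_0)$ with a single power of $s$.
\item The inequality $k'<k$ for the singularities of $T$ is exactly where the missing $x^k$-term is used. The roots of the leading form sum to zero and the form is not $x^{k+1}$, so no root has multiplicity $k+1$. You should say this.
\end{enumerate}

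The genuine gap is your opening reduction: that stable reduction is local and that away from $p$ the curve ``stays unchanged''. This is true for the semistable model your blow-ups produce, with central fibre $\widetilde C_0\cup T$. But stabilisation is global: it contracts every rational component with fewer than three special points. So $\widetilde C_0\cup T$ is the stable reduction only if $\widetilde C_0$, marked at the preimages of $p$, is stable and $T$, marked at its attaching points, is stable. Without that hypothesis the statement is false:
\begin{itemize}
\item For $k=1$ the genus-$0$ tail is a rational bridge and is contracted; the node survives.
\item More seriously, the family $y^2=x^5+tz^{10}$ in $\PP(1,2,5)$ is isotrivial, with smooth genus-$2$ fibres for $t\ne0$, and its central fibre is a rational curve with a single $A_4$ cusp. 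Your construction gives $\PP^1\cup T$ with $\PP^1$ meeting $T$ once. That $\PP^1$ is contracted, so the stable limit is the smooth curve $\{y^2=x^5+z^{10}\}$, with no normalisation component and not a boundary point.
\end{itemize}
So you must either add the stability hypothesis or state the result for the semistable model.

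This also undercuts your closing remark that the statement is ``exactly what is needed'' for quasi-finiteness. On a degree-$2$ K3, a line with $5$-fold contact with $B$ gives an irreducible rational $f^*\ell$ with one $A_4$ cusp. Since $f^*\ell$ is irreducible, $\Pic S=\ZZ H$ does not exclude this. The limit is then the genus-$2$ tail alone, which lies in the boundary only if the tail itself is singular. Similarly, for $k=3$ the rational normalisation is a bridge and gets contracted, so the limit is irreducible nodal, not reducible.
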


Consider a $1$-dimensional irreducible closed subvariety $T\subset|H|\cong\Pdual$ meeting the smooth locus, and suppose the family it parametrises were isotrivial, so that $\mu|_T$ is constant with value a fixed smooth genus-$2$ curve, an interior point of $\overline{\Mmod}_2$. Being a complete curve in the dual plane $\Pdual$, $T$ meets the dual sextic $B^\vee\subset\Pdual$; the points of $B^\vee$ are the lines tangent to $B$, so $T$ contains such a line $\ell$, for which $f^*\ell$ has an $A_k$ singularity with $k\ge1$. By \Cref{thm:hassett} the stable reduction of the family $\{f^*\ell\}_{\ell\in T}$ at this member is an irreducible nodal curve ($k=1$) or a reducible one ($k\ge2$), hence a point of the boundary $\partial\overline{\Mmod}_2$, contradicting isotriviality. Therefore $\mu$ has no positive-dimensional fibre meeting $|H|_{\sm}$, i.e.,\ it is quasi-finite there.

This establishes an alternative proof of the following result.

\begin{corollary}[{Chen--Gounelas \cite{CG}}]\label{thm:finiteness_h2}
Let $(S,H)$ be a K3 surface of Picard rank one and degree $2$. Then the moduli map $\mu \colon |H|_{\sm} \to \Mmod_2$ is quasi-finite.
\end{corollary}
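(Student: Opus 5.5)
The statement is essentially the argument sketched just before it, so the plan is to organise that sketch into a proof, argued by contradiction. Suppose $\mu$ is not quasi-finite on $|H|_{\sm}$. Then some fibre $\mu^{-1}([C_0])$ has a positive-dimensional irreducible component meeting $|H|_{\sm}$. Using $|H|\cong\Pdual$, let $T\subset\Pdual$ be its closure; this is an irreducible projective curve. Along the open dense subset $T^\circ=T\cap|H|_{\sm}$, every member $f^*\ell$ is isomorphic to the fixed smooth genus-$2$ curve $C_0$.

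The first step is to show that $T$ meets the discriminant. The complement $|H|\setminus|H|_{\sm}$ is exactly the dual curve $B^\vee$, by the local computation $t^2=x^m$ above. Since $B$ is a smooth sextic, $B^\vee$ is a curve of degree $30$ in $\Pdual$. Two curves in $\Pdual$ always intersect, so there is a line $\ell_0\in T\cap B^\vee$. The curve $f^*\ell_0$ then has at least one singularity of type $A_k$ with $k\ge1$.

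The second step, which is the main point, is a stable-reduction comparison. Take the normalisation $\widetilde T\to T$ and a point $t_0$ over $\ell_0$, and restrict to a smooth pointed curve germ $(\Delta,t_0)$. Over $\Delta$ we have the pulled-back family of curves $\{f^*\ell_t\}$. On one side, since the family over $\Delta^*$ is isotrivial with fibre $C_0$, after a finite base change it becomes trivial. Separatedness of $\overline{\Mmod}_2$ then forces the stable limit to be $[C_0]$, which lies in the interior $\Mmod_2$. On the other side, \Cref{thm:hassett}, applied at each $A_k$ point (the stable reduction being local at the singularities), says the stable limit either has a non-separating node ($k$ odd) or an elliptic or hyperelliptic tail attached at a node ($k$ even). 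Either way the limit lies in $\partial\overline{\Mmod}_2$, which is a contradiction.

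The obstacle I expect is justifying the use of \Cref{thm:hassett} in a family that need not be a generic smoothing. The total space over $\Delta$ may be singular, with the family meeting the versal deformation of the $A_k$ singularity non-transversally. Hassett's description is stated for one-parameter smoothings, so I would first check that the family $\{f^*\ell_t\}$ is an honest smoothing near $\ell_0$, since the general member is smooth. Even if the tail is not exactly the one Hassett describes, it suffices that the limit is singular. For that I would use an invariant-cycle or monodromy argument: nontrivial vanishing cycles at an $A_k$ point give nontrivial monodromy on $\HH^1$, whereas an isotrivial family has finite monodromy and a smooth stable limit with no vanishing cycles. If needed, I would fall back on the direct argument that a smooth limit would make the family smooth after base change, contradicting the presence of an $A_k$ singularity with nonzero Milnor number.

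Finally, note that Picard rank one enters only through $S$ being a smooth double plane with $B$ smooth. Quasi-finiteness then follows because no fibre of $\mu$ has a positive-dimensional component through $|H|_{\sm}$.
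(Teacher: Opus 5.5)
Your outline is the paper's own argument: the paper also picks a point of $T\cap B^\vee$, applies \Cref{thm:hassett} there, and contradicts isotriviality via separatedness of $\overline{\Mmod}_2$. The obstacle you flag is genuine, however, and neither of your repairs removes it.

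\emph{The monodromy fallback.} Nontrivial vanishing cycles are compatible with finite monodromy. The quasi-homogeneous smoothing $w^2=x^{k+1}+t$ of an $A_k$ point has monodromy of finite order on the Milnor fibre. For $k$ even, its action on $\HH^1$ of the global curve also stays finite, because after base change it is a twist about a \emph{separating} curve. For example, at an ordinary flex line the $\HH^1$-monodromy has finite order, yet the stable limit lies in $\Delta_1$. So $\HH^1$-monodromy only detects non-separating nodes.

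\emph{The Milnor-number fallback.} Stable reduction performs, after base change, a birational modification of the total space. A singular central fibre in the given model is therefore no obstruction to a smooth stable model.

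More seriously, the claim ``either way the limit lies in $\partial\overline{\Mmod}_2$'' is false even for generic smoothings, because you skip the stabilisation that follows Hassett's description. Suppose $\ell_0\cap B=5p+q$. Then $f^*\ell_0$ is an irreducible rational curve with a single $A_4$ point, so Picard rank one does not exclude it. Rank one only excludes split members, such as those over lines with $\ell\cap B=6p$, where the same phenomenon occurs.

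Take the pencil of lines through a general point of $\ell_0$. The local family is $w^2=x^5+ct+\cdots$ with $c\neq0$. Hassett's limit is the normalisation $\PP^1$ attached at one point to the genus-$2$ tail $w^2=x^5+c$. Contracting that $\PP^1$ leaves the \emph{smooth} curve $w^2=x^5+1$, an interior point of $\overline{\Mmod}_2$.

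What is robust is the following. The normalisation of any component of positive geometric genus of the reduced central fibre reappears in every stable limit. Hence at a simple tangent or an ordinary flex line, where $f^*\ell_0$ has geometric genus $1$, the limit lies in the boundary for any smoothing.

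The missing step is a reason why $T$ must meet $B^\vee$ at such a line. A priori, $T\cdot B^\vee=30\deg T$ could be concentrated at the finitely many lines with rational pullback. Otherwise you need a separate treatment of those lines; at the $(5,1)$ lines the local argument genuinely breaks down. The paper's sketch passes over the same point, and the statement itself rests on \cite{CG}.
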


\subsubsection{Ramification}

Throughout this subsection $g=2$, $H^2=2$: $f\colon S\to\PP^2$ is the double cover branched over a smooth sextic $B=\{F_6=0\}$, $H=f^*\Oc_{\PP^2}(1)$, and for a line $\ell\subset\PP^2$ we write $C_\ell:=f^{-1}(\ell)\in|H|$. Set
\[
\begin{aligned}
   F&:=T_{\PP^2}(-\log B)=\Om_{\PP^2}(\log B)(-3), 
\end{aligned}
\]
so that $c_1(F)=-3h$, and $c_2(F)=21h^2$ for $h:=c_1(\OO_{\PP^2}(1))$.
By the Grauert--M\"ulich theorem, the restriction of $F$ to a general line $\ell$ splits as $\Oc_\ell(-1)\oplus\Oc_\ell(-2)$, so $h^0(\ell,F|_\ell)=0$. A line $\ell$ is called a \emph{jumping line} for $F$ if the splitting is more unbalanced, which is equivalent to $h^0(\ell,F|_\ell)\ge1$.

The following also appears in \cite[Proposition~5.4]{DH}.

\begin{lemma}\label{lem:double-plane}
The curve $C_\ell$ is smooth if and only if $\ell$ is transverse to $B$.
For every line $\ell$, 
\[ 
   \HH^0\!\bigl(C_\ell,\,T_S|_{C_\ell}\bigr)\cong \HH^0\!\bigl(\ell,\,F|_\ell\bigr).
\]
\end{lemma}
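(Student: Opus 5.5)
The first assertion is the local computation already made before the statement. Where $\ell$ meets $B$ with multiplicity $m$, the preimage $C_\ell$ is locally $t^2=x^m$, an $A_{m-1}$ singularity. So $C_\ell$ is smooth exactly when every intersection point has $m=1$, i.e. when $\ell$ is transverse to $B$.

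For the isomorphism, I would compute the pushforward $f_*T_S$ via the $\ZZ/2$-eigenspace decomposition under the covering involution $\iota$, and then restrict to $\ell$ using the projection formula.
\begin{enumerate}
\item The standard description of tangent sheaves of double covers gives $f_*T_S\cong T_{\PP^2}(-\log B)\oplus T_{\PP^2}(-3)$. The invariant part consists of vector fields tangent to $B$. The anti-invariant part is $T_{\PP^2}\otimes\Oc(-3)$, since $\iota$-anti-invariant vector fields are $t\cdot(\text{pullback})$ with $t$ of weight $\Oc(3)$.
\item Locally, with $t^2=y$ and $B=\{y=0\}$, this can be checked directly. The fields $\partial_x$ and $t\partial_t$ are invariant and push to $\partial_x$ and $2y\partial_y$, which generate $T(-\log B)$. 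The fields $t\partial_x$ and $\partial_t$ are anti-invariant, and $\partial_t=2t\partial_y$ gives $t\cdot T_{\PP^2}$.
\item Since $f$ is finite and flat and $C_\ell=f^{-1}(\ell)$, we get $\HH^0(C_\ell,T_S|_{C_\ell})=\HH^0(\ell,(f_*T_S)|_\ell)=\HH^0(\ell,F|_\ell)\oplus\HH^0(\ell,T_{\PP^2}(-3)|_\ell)$. This uses that $f_*$ commutes with restriction to $\ell$ by flat base change, and it holds for every line, including tangent ones.
\item We have $T_{\PP^2}|_\ell=\Oc_\ell(2)\oplus\Oc_\ell(1)$, so $T_{\PP^2}(-3)|_\ell=\Oc(-1)\oplus\Oc(-2)$, which has no sections. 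Hence only $\HH^0(\ell,F|_\ell)$ survives.
\end{enumerate}

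The main obstacle is step 1. One has to verify carefully that the invariant summand is exactly the logarithmic tangent sheaf, with the correct twist so that it matches $F=\Om_{\PP^2}(\log B)(-3)$, and that the anti-invariant summand is $T_{\PP^2}(-3)$ rather than some other twist.

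Step 1 could be done via the sequence $0\to f^*\Om_{\PP^2}\to\Om_S\to\Oc_R(-R)\to 0$ together with $T_S\cong\Om_S$. I would instead rely on the local computation in step 2, glued using the global eigensheaf decomposition $f_*\Oc_S=\Oc\oplus\Oc(-3)$.

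The isomorphism $T_{\PP^2}(-\log B)\cong\Om_{\PP^2}(\log B)(-3)$ holds because the log bundle has rank $2$ and determinant $\Oc(3)$, which is consistent with $c_1(F)=-3h$ as stated in the paper. A consistency check: both sides of the final isomorphism vanish for a general $\ell$, by Grauert--Mülich on the right and generic unramifiedness of $\mu$ on the left.
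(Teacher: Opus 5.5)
Your proof is correct. It differs from the paper's only in which pushforward you decompose.

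The paper quotes the Esnault--Viehweg splitting $f_*\Om_S\cong\Om_{\PP^2}\oplus\Om_{\PP^2}(\log B)(-3)$. There $F$ is the \emph{anti-invariant} summand, and the invariant summand $\Om_{\PP^2}$ has no sections on lines; the paper then passes to tangent vectors via $T_S\cong\Om_S$.

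You instead compute $f_*T_S\cong T_{\PP^2}(-\log B)\oplus T_{\PP^2}(-3)$ directly. Here $F$ is the \emph{invariant} summand, and the anti-invariant summand $T_{\PP^2}(-3)\cong\Om_{\PP^2}$ has no sections on lines. The two splittings are compatible. The symplectic form $\sigma$ is anti-invariant under the covering involution, because $\HH^0(\PP^2,\Omega^2_{\PP^2})=0$, so the isomorphism $v\mapsto\sigma(v,\cdot)$ exchanges the eigensheaves.

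Your local check is self-contained and complete. It uses the $\Oc_{\PP^2}$-basis $\partial_x,\,t\partial_t,\,t\partial_x,\,\partial_t$ of $f_*T_S$, together with the canonical maps of the invariant part to $T_{\PP^2}$ and the anti-invariant part to $T_{\PP^2}\otimes(f_*\Oc_S)^-$. It also does not need $T_S\cong\Om_S$. The paper's version is shorter only because it relies on the citation.

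One small correction: the identification $f_*(T_S|_{C_\ell})\cong(f_*T_S)|_\ell$ is not flat base change, since $\ell\hookrightarrow\PP^2$ is not flat. It is the projection formula $f_*(\mathcal{G}\otimes f^*\mathcal{Q})\cong f_*\mathcal{G}\otimes\mathcal{Q}$, which holds for an affine morphism and any quasi-coherent $\mathcal{Q}$. Apply it with $\mathcal{Q}=\Oc_\ell$ and $f^*\Oc_\ell=\Oc_{C_\ell}$.
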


\begin{proof}
The Esnault--Viehweg splitting of the pushforward of the cotangent sheaf of a double plane \cite[Lemma~3.16(d)]{EV} (the $\pm1$-eigensheaves of the Galois involution) reads
\[
   f_*\Om_S\ \cong\ \Om_{\PP^2}\ \oplus\ \Om_{\PP^2}(\log B)(-3)\ =\ \Om_{\PP^2}\oplus F.
\]
Since $f$ is finite flat and $C_\ell=f^*\ell$, the projection formula gives
\[
   f_*(\Om_S|_{C_\ell})\ =\ (f_*\Om_S)|_\ell\ =\ \Om_{\PP^2}|_\ell \oplus F|_\ell,
\]
and the definition of direct image gives
\[
   \HH^0(C_\ell,\Om_S|_{C_\ell})\ =\ \HH^0(\ell,\Om_{\PP^2}|_\ell) \oplus \HH^0(\ell,F|_\ell).
\]
For every line $\Om_{\PP^2}|_\ell\cong\Oc_\ell(-1)\oplus\Oc_\ell(-2)$ has no sections, so using $T_S\cong\Om_S$ we get
\[
   \HH^0(C_\ell,T_S|_{C_\ell})\ =\ \HH^0(\ell,F|_\ell).
\] Finally $C_\ell\to\ell$ is the double cover branched on $\ell\cap B$, smooth if and only if that divisor is reduced, i.e.,\ if and only if $\ell$ is transverse.
\end{proof}

\begin{lemma}
\label{lem:F-stable}
The bundle $F$ admits the following resolution by locally free sheaves
\begin{equation}\label{sequence:F}
\begin{tikzcd}[column sep=small]
0 \arrow[r] & \OO_{\PP^2}(-9) \arrow[r, "\psi"]
& \OO_{\PP^2}(-4)^{\oplus3} \arrow[r] & F \arrow[r] & 0.
\end{tikzcd}
\end{equation}
where the map $\psi$ is given by multiplication by the column vector of partial derivatives of $F_6$. In particular, $F$ is stable.
\end{lemma}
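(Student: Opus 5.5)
The plan is to obtain \eqref{sequence:F} from the standard residue description of the logarithmic cotangent bundle, dualised and twisted, and then to read off stability from the resolution.

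\emph{Step 1 (the resolution).} On $\PP^2$ with homogeneous coordinates $x_0,x_1,x_2$, the Euler sequence presents $T_{\PP^2}$ as the cokernel of $\OO_{\PP^2}\to\OO_{\PP^2}(1)^{\oplus3}$, $1\mapsto(x_0,x_1,x_2)$. A vector field $\sum_i a_i\partial_i$ (with $a_i$ linear forms) is tangent to $B$, i.e.\ a local section of $T_{\PP^2}(-\log B)$, exactly when $\sum_i a_i\partial_iF_6\in(F_6)$. By Euler's identity the vector $(\partial_0F_6,\partial_1F_6,\partial_2F_6)$ pairs to $6F_6$ with $(x_i)$.

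I would therefore use the classical sequence for a smooth plane curve of degree $e$:
\[
0\to\OO_{\PP^2}(-e+1)\xrightarrow{(\partial_iF)}\OO_{\PP^2}^{\oplus3}\to\Om_{\PP^2}(\log B)(?)\to0.
\]
This sequence is equivalent to the more common one
\[
0\to T_{\PP^2}(-\log B)\to\OO_{\PP^2}(1)^{\oplus3}\xrightarrow{(\partial_iF_6)}\OO_{\PP^2}(e)\to0,
\]
after quotienting by the Euler vector field. Concretely: $\OO_{\PP^2}(1)^{\oplus3}$ maps onto $\OO_{\PP^2}(6)$ via $(\partial_iF_6)$, which is surjective since $B$ is smooth. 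The kernel is a rank-$2$ bundle $K$ with $c_1(K)=-3$, containing the Euler image $\OO_{\PP^2}$ (because $\sum x_i\partial_iF_6=6F_6$ vanishes modulo $F_6$). Tracking this correctly yields $T_{\PP^2}(-\log B)$, up to the chosen twist normalisation.

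To reach the stated form, I would dualise the residue sequence
\[
0\to\Om_{\PP^2}\to\Om_{\PP^2}(\log B)\to\OO_B\to0
\]
together with the Koszul-type presentation of $\Om_{\PP^2}(\log B)$. That presentation is: $\Om_{\PP^2}(\log B)$ is the cokernel of $\OO_{\PP^2}(-6)\xrightarrow{(\partial_iF_6)}\OO_{\PP^2}(-1)^{\oplus3}$. This is the standard statement for smooth $B$, since the partials have no common zero, so the map is a bundle injection with locally free cokernel. Twisting by $-3$ gives $0\to\OO(-9)\to\OO(-4)^{\oplus3}\to F\to0$.

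To verify the identification with $\Om_{\PP^2}(\log B)$, I would compare $c_1$ (equal to $-1\cdot3+6=3$) and $c_2$, and write down the map explicitly: $e_i\mapsto dF_6$-related forms. The cleaner route is the local description $\Om(\log B)=\langle dx,\,dF/F\rangle$. As a check, $c_2(F)=(-4)^2\cdot3-\ldots$; computing from the Chern polynomial $(1-4h)^3/(1-9h)$ gives $c_1=-3h$ and $c_2=48-108+81=21$, matching $21h^2$.

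\emph{Step 2 (stability).} Since $F$ has rank $2$ and $c_1(F)=-3h$ is odd, stability is equivalent to $\HH^0(\PP^2,F(1))=0$, because any destabilising line subbundle $\OO(a)$ would need $a\ge-1$. Twisting the resolution by $1$ gives
\[
0\to\OO(-8)\to\OO(-3)^{\oplus3}\to F(1)\to0,
\]
and $\HH^0(\OO(-3))=0=\HH^1(\OO(-8))$ on $\PP^2$, so $\HH^0(F(1))=0$.

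The main obstacle is Step 1: pinning down the correct twists and confirming that the cokernel of $\psi$ is genuinely $\Om_{\PP^2}(\log B)(-3)$ rather than just a bundle with the same Chern classes. I would settle this by the explicit local computation on a chart where $B=\{y=0\}$.
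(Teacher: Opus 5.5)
\textbf{Gap in Step 1.} Your Step 2 is correct and is the paper's argument: $F$ has slope $-3/2$, so a destabilising line subsheaf would be some $\OO(a)$ with $a\ge-1$, and this is excluded by $\HH^0(F(1))=0$, read off from the twisted resolution.

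The gap is Step 1, which is the real content of the lemma and which you yourself flag as the main obstacle. You assert that $\Om_{\PP^2}(\log B)\cong\operatorname{coker}\bigl(\nabla F_6\colon\OO(-6)\to\OO(-1)^{\oplus3}\bigr)$ is ``standard''. The reasons you give are that the partials have no common zero and that $c_1,c_2$ agree. These only show that $\operatorname{coker}\psi$ is some rank-$2$ bundle with the right Chern classes, which does not identify it. A local computation on a chart where $B=\{y=0\}$ cannot produce a global isomorphism without a global map to compare.

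Your auxiliary route also contains an error. The Euler image is \emph{not} contained in $K=\ker\bigl(\nabla F_6\colon\OO(1)^{\oplus3}\to\OO(6)\bigr)$, since $\sum_i x_i\partial_iF_6=6F_6$ is a nonzero section of $\OO(6)$; vanishing modulo $F_6$ would only matter if the target were $\OO_B(6)$. In any case, the quotient of the rank-$2$ bundle $K$ by $\OO$ would have rank $1$. In fact $K$ itself is $T_{\PP^2}(-\log B)$: if $\sum_i a_i\partial_iF_6=gF_6$, then $(a_i)_i=\tfrac g6(x_i)_i+\bigl(a_i-\tfrac g6x_i\bigr)_i$ with the second summand in $K$. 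So your two displayed sequences are just dual to each other, but this has to be proved, not asserted.

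\textbf{Missing ingredient.} You need a global map $\alpha\colon\OO(-1)^{\oplus3}\to\Om_{\PP^2}(\log B)$ relating two sequences:
\begin{itemize}
\item the Euler sequence $0\to\Om_{\PP^2}\to\OO(-1)^{\oplus3}\to\OO\to0$;
\item the residue sequence $0\to\Om_{\PP^2}\to\Om_{\PP^2}(\log B)\to j_*\OO_B\to0$.
\end{itemize}
The map should induce the identity on $\Om_{\PP^2}$ and a nonzero multiple of the restriction $\OO\to j_*\OO_B$ on the quotients. This is what the paper uses. The snake lemma then shows that $\alpha$ is surjective with kernel $\ker(\OO\to j_*\OO_B)=\OO(-6)$. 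Euler's identity identifies the inclusion $\OO(-6)\to\OO(-1)^{\oplus3}$ with $\nabla F_6$.

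Explicitly, one may take $\alpha(e_i)=dx_i-\tfrac{x_i}{6}\,\tfrac{dF_6}{F_6}$. This form is homogeneous of degree $1$ and killed by contraction with the Euler field, so it descends to a section of $\Om_{\PP^2}(\log B)(1)$. It restricts to the standard inclusion on $\Om_{\PP^2}$, and its residue along $B$ is $-x_i/6$. Moreover, $\sum_i a_i\alpha(e_i)=0$ exactly when $(a_i)$ is proportional to $(\partial_iF_6)$. With this in hand, your twist by $-3$ and your Step 2 complete the proof.
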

\begin{proof}
    Consider the commutative diagram with the Euler sequence
\[
\xymatrix{
0 \ar[r] & \Omega^1_{\PP^2} \ar[r] \ar@{=}[d] & \OO_{\PP^2}(-1)^{\oplus 3} \ar[r]^-{p_A} \ar[d]_{\alpha} & \OO_{\PP^2} \ar[r] \ar[d] & 0 \\
0 \ar[r] & \Omega^1_{\PP^2} \ar[r] & \Omega^1_{\PP^2}(\log B) \ar[r]^-{\mathrm{Res}} & j_*\OO_B \ar[r] & 0.
}
\]
As the kernel of the vertical map on the right is $\OO_{\PP^2}(-6)$, the snake lemma yields a short exact sequence
\[
\begin{tikzcd}[column sep=small]
0 \arrow[r] & \OO_{\PP^2}(-6) \arrow[r, "\psi'"]
& \OO_{\PP^2}(-1)^{\oplus3} \arrow[r]
& \Omega^1_{\PP^2}(\log B) \arrow[r] & 0.
\end{tikzcd}
\]
That the map $\psi'$ is given by multiplication by the column vector of partial derivatives of $F_6$ follows from the right-hand commutative diagram above as well as Euler's homogeneous function theorem (using that the map $p_A$ is multiplication by $(x_0,x_1,x_2)$).
Tensoring this sequence by $\OO_{\PP^2}(-3)$ gives the required exact sequence for $F = \Omega^1_{\PP^2}(\log B)(-3)$.

To prove that $F$ is $\mu$-stable, since its slope is $-\frac{3}{2}$, it suffices to show that there is no line subbundle of degree $\geq -1$. But from \eqref{sequence:F} we see that $\HH^0(F(1))=0$, which concludes the proof.
\end{proof}

\begin{lemma}\label{lem:jump}
Let $\ell$ be transverse to $B$, $\ell\cap B=\{p_1,\dots,p_6\}$. Consider the map
\[
\begin{tikzcd}[row sep=small]
\HH^0(\ell,T_{\PP^2}|_\ell)\cong\CC^5
\arrow[r, "\rho_\ell"] & \CC^6,\\
v \arrow[r, mapsto] & \bigl(v(F_6)(p_i)\bigr)_{i=1}^6.
\end{tikzcd}
\]
Then
\[
\HH^0(\ell,F|_\ell)\cong\ker(\rho_\ell),
\]
and hence
\[
   h^0(\ell,F|_\ell)=5-\rank\rho_\ell.
\]
A transverse jumping line for $F$ yields a smooth $C_\ell$ with $\HH^0(C_\ell,T_S|_{C_\ell})\neq0$.
\end{lemma}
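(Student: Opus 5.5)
The plan is to identify $F$ with the sheaf of vector fields tangent to $B$, restrict the defining sequence to $\ell$, and read off global sections.

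\textbf{Step 1: identify $F$ with $T_{\PP^2}(-\log B)$.} Since $F=\Om_{\PP^2}(\log B)(-3)$ has rank $2$ and $\det\Om_{\PP^2}(\log B)=\Oc_{\PP^2}(-3+6)=\Oc_{\PP^2}(3)$, we have $\Om_{\PP^2}(\log B)^\vee\cong\Om_{\PP^2}(\log B)\otimes\det^{-1}$, so $F\cong T_{\PP^2}(-\log B)$. This is the subsheaf of $T_{\PP^2}$ consisting of vector fields tangent to $B$. It therefore sits in the exact sequence
\[
0\to F\to T_{\PP^2}\xrightarrow{\ v\mapsto v(F_6)|_B\ }\Oc_B(6)\to0 .
\]
Here $v(F_6)$ is computed on a local (or linear) lift of $v$. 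Changing the lift by a multiple of the Euler field changes $v(F_6)$ by a multiple of $6F_6$, which vanishes on $B$, so the map is well defined. Surjectivity follows because $B$ is smooth, so the partials of $F_6$ have no common zero on $B$. Equivalently, one can dualise the sequence $0\to\Om_{\PP^2}\to\Om_{\PP^2}(\log B)\to\Oc_B\to0$ used in the proof of \Cref{lem:F-stable}.

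\textbf{Step 2: restrict to $\ell$.} Tensor the sequence with $\Oc_\ell$. Since $\ell$ is transverse to $B$, $\ell\not\subset B$, so $\Tor_1(\Oc_B(6),\Oc_\ell)=0$ and the restriction stays exact:
\[
0\to F|_\ell\to T_{\PP^2}|_\ell\to \Oc_{\ell\cap B}\to 0 .
\]
The last term is the reduced scheme $\{p_1,\dots,p_6\}$, and fixing a local trivialisation of $\Oc(6)$ at each $p_i$ identifies its global sections with $\CC^6$. Taking global sections gives $\HH^0(\ell,F|_\ell)=\ker\bigl(\HH^0(\ell,T_{\PP^2}|_\ell)\to\CC^6\bigr)$. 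Since $T_{\PP^2}|_\ell\cong\Oc_\ell(1)\oplus\Oc_\ell(2)$, we have $h^0=5$.

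\textbf{Step 3: match the map with $\rho_\ell$.} The Euler sequence restricted to $\ell$, together with $\HH^1(\ell,\Oc_\ell)=0$, shows that every section $v$ of $T_{\PP^2}|_\ell$ lifts to a linear vector field $\sum L_i\partial_i$. For such a lift, the map on global sections is exactly $v\mapsto (v(F_6)(p_i))_i$, i.e.\ $\rho_\ell$. This gives both $\HH^0(\ell,F|_\ell)\cong\ker\rho_\ell$ and $h^0(\ell,F|_\ell)=5-\rank\rho_\ell$.

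\textbf{Step 4: the final assertion.} For a transverse jumping line, $h^0(\ell,F|_\ell)\ge1$. By \Cref{lem:double-plane}, $C_\ell$ is smooth and $\HH^0(C_\ell,T_S|_{C_\ell})\cong\HH^0(\ell,F|_\ell)\neq0$.

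The only delicate points are the identification $F\cong T_{\PP^2}(-\log B)$ with the correct twist, and checking that the restriction stays exact. The latter is exactly where transversality, i.e.\ reducedness of $\ell\cap B$, is used.
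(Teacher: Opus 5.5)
Your proposal is correct and follows essentially the same route as the paper: restrict the residue sequence $0\to F\to T_{\PP^2}\to\Oc_B(6)\to0$ to $\ell$, take global sections, and identify the induced map with $\rho_\ell$ by lifting sections to linear vector fields. The only difference in that step is inessential: you lift through the restricted Euler sequence and $\HH^1(\ell,\Oc_\ell)=0$, where the paper uses $\HH^1(\PP^2,T_{\PP^2}(-1))=0$. One small correction to your closing remark: exactness of the restricted sequence only needs $\ell\not\subset B$, as your own Step 2 shows, and this is automatic for a smooth, hence irreducible, sextic; transversality is what identifies $\HH^0(\Oc_{\ell\cap B}(6))\cong\CC^6$ with evaluation at six distinct points, so that the map really is $\rho_\ell$, and what makes $C_\ell$ smooth.
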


\begin{proof}
The bundle $F$ is defined as the kernel of the logarithmic residue map
\[
\begin{tikzcd}[column sep=small, row sep=small]
0 \arrow[r] & F \arrow[r] & T_{\PP^2} \arrow[r, "\varrho"]
& N_B=\Oc_B(6) \arrow[r] & 0.
\end{tikzcd}
\]
Here $\varrho(v)=v(F_6)|_B$.
For $\ell$ transverse, $\ell\cap B$ is reduced of length $6$, so $\Tor_1(N_B,\Oc_\ell)=0$ and restriction stays exact
\[
\begin{tikzcd}[column sep=small]
0 \arrow[r] & F|_\ell \arrow[r] & T_{\PP^2}|_\ell
\arrow[r, "\varrho_\ell"] & \displaystyle\bigoplus_{i=1}^6\CC
\arrow[r] & 0.
\end{tikzcd}
\]
Taking $\HH^0$ and noting $\varrho_\ell$ induces $\rho_\ell(v)=(v(F_6)(p_i))_i$ gives
\[
   \HH^0(\ell,F|_\ell) = \ker\rho_\ell.
\]
Here $T_{\PP^2}|_\ell\cong\Oc_\ell(2)\oplus\Oc_\ell(1)$, so $h^0=5$, and the map $\HH^0(\PP^2,T_{\PP^2})\to \HH^0(\ell,T_{\PP^2}|_\ell)$ is onto (its cokernel lives in $\HH^1(\PP^2,T_{\PP^2}(-1))=0$). Thus every section is some $v_A|_\ell$, where, via the Euler sequence, $v_A$ is represented by a triple $A=(A_0,A_1,A_2)$ of linear forms, unique modulo a scalar multiple of $(x_0,x_1,x_2)$; equivalently, $A$ is a $3\times3$ matrix modulo scalar matrices. Then
\[
   v_A(F_6)(p_i) = (Ap_i)\cdot\nabla F_6(p_i).
\] The last assertion combines this with \Cref{lem:double-plane}.
\end{proof}

We recall the notation for jumping lines used in \cite{DS}. Let
\[
   V_3:=\CC[x_0,x_1,x_2],
\]
let $D=\{G=0\}\subset\PP^2$ be a reduced plane curve of degree $d$, and set
\[
   E_D:=T_{\PP^2}(-\log D)(-1).
\]
For a line $\ell\subset\PP^2$, its ordered splitting type is the unique pair
$d_1^\ell\le d_2^\ell$ such that
\[
   E_D|_\ell\cong\OO_\ell(-d_1^\ell)\oplus\OO_\ell(-d_2^\ell).
\]
If $\ell_{\mathrm{gen}}$ is a general line, the jumping order of $\ell$ is
$o(\ell):=d_1^{\ell_{\mathrm{gen}}}-d_1^\ell$, and $\ell$ is a jumping line
when $o(\ell)>0$. Write
\[
\begin{aligned}
   AR(G)&:=\left\{(a_0,a_1,a_2)\in V_3^3:
      \sum_{i=0}^2a_i\partial_iG=0\right\},\\
   \operatorname{mdr}(G)&:=\min\{m\ge0:AR(G)_m\ne0\},\\
   N(G)&:=\widehat{J_G}/J_G,
\end{aligned}
\]
where $J_G=(\partial_0G,\partial_1G,\partial_2G)$ and $\widehat{J_G}$ is its
saturation with respect to $(x_0,x_1,x_2)$. If $D$ is smooth, then
$\widehat{J_G}=V_3$, so $N(G)=V_3/J_G$. For a line
$\ell=\{\alpha_\ell=0\}$, multiplication by $\alpha_\ell$ gives
\[
\begin{tikzcd}
N(G)_{k+d-2} \arrow[r, "\cdot\alpha_\ell"]
& N(G)_{k+d-1}.
\end{tikzcd}
\]
Set
\[
k(G,\ell):=\min\{k\ge0:
\cdot\alpha_\ell\colon N(G)_{k+d-2}\to N(G)_{k+d-1}
\text{ is not injective}\}.
\]
Then \cite[Proposition~4.1]{DS} gives
\[
   d_1^\ell=\min\{\operatorname{mdr}(G),k(G,\ell)\}.
\]
Accordingly, for $k\ge0$ the $k$-th jumping locus is
\[
   V_k(D):=\{\ell\in\Pdual:d_1^\ell\le k\}.
\]

\begin{proposition}\label{prop:reduced-example_h2}
Let $B_1\subset\PP^2$ be the plane sextic defined over $\QQ$ by
\[
 F_1=X^6+Y^6+Z^6+X^5Y+Y^5Z+Z^5X
     +2X^3Y^2Z+3X^2YZ^3+5XY^3Z^2.
\]
Then $B_1$ is smooth, and its jumping-line scheme is reduced of length
$171$ and is disjoint from the dual curve $B_1^\vee$. In particular, all
its jumping lines are transverse to $B_1$.
\end{proposition}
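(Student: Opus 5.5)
This is a statement about one explicit sextic, so the proof is a certified computation. We organise it around an explicit equation for the jumping-line scheme in $\Pdual$. The paper says this verification is in Appendix~\ref{app:jumping}.

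\textbf{Smoothness of $B_1$.} Compute a Gröbner basis of $J_{F_1}=(\partial_XF_1,\partial_YF_1,\partial_ZF_1)$ over $\QQ$. Check that the ideal is $(X,Y,Z)$-primary, i.e.\ $V_3/J_{F_1}$ is Artinian. Equivalently, check that $V_3/J_{F_1}$ has Hilbert series $(1+t+t^2+t^3+t^4)^3$, which by the discussion in \Cref{sec:hypersurfaces} characterises a regular sequence of three quintics. Then $J_{F_1}$ has no zero in $\PP^2$, so $B_1$ is smooth. This step is routine.

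\textbf{Equations for the jumping scheme.} For smooth $B$ of degree $6$, Grauert--Mülich says the generic splitting of $F$ is $\Oc_\ell(-1)\oplus\Oc_\ell(-2)$. In the Dimca--Sticlaru normalisation $E_D=F(2)$ this is $d_1=1$ generically, and a line is jumping iff $d_1^\ell=0$, i.e.\ iff $h^0(\ell,F|_\ell)\neq0$. Since $\operatorname{mdr}(F_1)\ge1$ for smooth $B_1$, the criterion of \cite[Proposition~4.1]{DS} becomes $k(F_1,\ell)=0$. So $\ell=\{\alpha_\ell=0\}$ is jumping iff
\[
\cdot\alpha_\ell\colon (V_3/J_{F_1})_4\longrightarrow (V_3/J_{F_1})_5
\]
fails to be injective. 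Here $\dim(V_3/J_{F_1})_4=15$ and $\dim(V_3/J_{F_1})_5=18$.

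Write $\alpha_\ell=aX+bY+cZ$ with $[a:b:c]\in\Pdual$. The map above is then a $18\times15$ matrix $M(a,b,c)$ whose entries are linear forms in $a,b,c$. We define the jumping scheme as the determinantal scheme cut out by the maximal minors of $M$, i.e.\ the Fitting ideal where the rank is at most $14$. Its expected codimension is $18-15+1=4$, but the scheme lives in $\Pdual$. So the honest object is the degeneracy scheme, and I would compute it directly.

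\textbf{Certification.} Over $\QQ$, or modulo a large prime as a check, compute the ideal $I$ of the $15\times15$ minors. A cheaper way is to eliminate the kernel vector: consider the incidence $\{([a{:}b{:}c],[v]) : M(a,b,c)v=0\}\subset\Pdual\times\PP^{14}$ and project it. We then verify three things.
\begin{itemize}
\item[(a)] $I$ is zero-dimensional, with Hilbert polynomial constant equal to $171$.
\item[(b)] $I$ is radical. To check this, compare with the radical, or test that the Jacobian criterion for the incidence equations holds at every point. A convenient way is to compute a univariate eliminant in a generic linear coordinate and check it is squarefree of degree $171$.
\item[(c)] $I+(\text{equation of }B_1^\vee)$ is the unit ideal.
\end{itemize}
For (c) we avoid computing the degree-$30$ dual curve. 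Instead, check that the system $\{\alpha_\ell=0,\ F_1=0,\ \partial_{\alpha}\text{-tangency}\}$ together with the minors has no solutions. Concretely, work with the incidence of points $p\in B_1$ and lines $\ell$ through $p$ with $\nabla F_1(p)\parallel(a,b,c)$, adjoin the minors, and obtain the unit ideal via a Gröbner basis. Transversality of all jumping lines then follows from the definition of $B_1^\vee$.

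\textbf{Main obstacle.} The hard part is computational size and rigour. Computing all $15\times15$ minors of an $18\times15$ linear matrix symbolically is heavy. I would work with the incidence formulation on affine charts of $\Pdual$. I would also do the main computation modulo several large primes and then argue semicontinuity: if the scheme mod $p$ is reduced of length $171$, the rational scheme has length at most $171$ and is reduced. The count $171$ should be matched against the Thom--Porteous expectation (Remark~\ref{Thom-Porteous}) to pin down equality. The disjointness from $B_1^\vee$ likewise reduces to a unit ideal mod $p$, which lifts to characteristic zero.
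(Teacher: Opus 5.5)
There is a genuine gap where you translate ``jumping'' into a multiplication map. Your Dimca--Sticlaru index is off by one, so the matrix $M(a,b,c)$ cuts out the wrong locus. In the convention used here, $E_D=T_{\PP^2}(-\log D)(-1)=F(-1)$, not $F(2)$; note that $F(2)$ would give generic $d_1=-1$ anyway, not $d_1=1$. So the generic ordered splitting type is $(2,3)$ and $F|_\ell\cong\Oc_\ell(1-d_1^\ell)\oplus\Oc_\ell(1-d_2^\ell)$. Hence $h^0(\ell,F|_\ell)\neq0$ if and only if $d_1^\ell\le1$, i.e.\ $k(F_1,\ell)\le1$. Non-injectivity in degree $4$ forces non-injectivity in degree $5$, so the jumping scheme is the degeneracy locus of $\cdot\alpha_\ell\colon N(F_1)_5\to N(F_1)_6$. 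This is a map $\CC^{18}\to\CC^{19}$ whose rank-drop locus has expected codimension $2$.

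The map $N(F_1)_4\to N(F_1)_5$ that you chose detects only $d_1^\ell=0$. Since $(J_{F_1})_4=0$, a kernel element is a quartic $A\neq0$ with $\alpha_\ell A=\sum_i c_i\partial_iF_1$ for constants $c_i$, not all zero. That is a constant relation among the $\partial_iF_1|_\ell$, i.e.\ a nonzero section of $F|_\ell(-1)=\ker(\Oc_\ell^{3}\to\Oc_\ell(5))$. Equivalently $F|_\ell\cong\Oc_\ell(a)\oplus\Oc_\ell(-3-a)$ with $a\ge1$, i.e.\ $h^0(\ell,F|_\ell)\ge2$. This is the deeper stratum, of expected codimension $4$ in $\Pdual$, which is exactly what your own count $18-15+1=4$ was signalling before you set it aside.

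For $B_1$ this stratum is actually empty. At such a line, the $7\times6$ matrix described below has rank at most $4$, so all its maximal minors vanish to order at least $2$ there. That is incompatible with the true jumping scheme being reduced, which is what the statement asserts. So your step (a) would return the unit ideal, not a scheme of length $171$. There would also be nothing to match against the Thom--Porteous number, which counts the codimension-$2$ degeneracy of $\rho_\ell$.

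The fix is to degenerate the correct map. One option is the $19\times18$ matrix of $\cdot\alpha_\ell\colon N(F_1)_5\to N(F_1)_6$. The paper's option is more economical: the $7\times6$ matrix of $\HH^0(\ell,\Oc_\ell(1)^{3})\to\HH^0(\ell,\Oc_\ell(6))$ given by the restricted gradient $\nabla F_1|_\ell$. Its kernel is $\HH^0(\ell,F|_\ell)$, because dualising the resolution of $F$ shows $F\cong F^\vee(-3)=\ker\bigl(\nabla F_1\colon\Oc_{\PP^2}(1)^{3}\to\Oc_{\PP^2}(6)\bigr)$. Its seven maximal minors, computed on the affine charts of $\Pdual$, cut out the degree-$171$ scheme.

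With that correction, the rest of your plan is a legitimate variant of the paper's Macaulay2 verification:
\begin{itemize}
\item smoothness via the Hilbert series of the Jacobian ring;
\item radicality via the Jacobian criterion or a squarefree eliminant;
\item disjointness from $B_1^\vee$ via a point--line tangency incidence, instead of the paper's degree-$30$ binary resultant.
\end{itemize}
One caveat on your mod-$p$ shortcut. A reduced fibre of length $171$ over $\mathbf{F}_p$ only bounds the rational length from above; it does not by itself give reducedness over $\QQ$. It is the Thom--Porteous equality that forces the scheme to be flat over $\ZZ_{(p)}$, and hence \'etale in characteristic $0$. The paper avoids this by computing exactly over $\QQ$.
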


\begin{proof}
We apply the preceding notation with $D=B_1$, $G=F_1$, and $d=6$.
A direct calculation shows that $B_1$ is smooth.  Consequently,
$\operatorname{mdr}(F_1)=5$ and $AR(F_1)$ is generated by the three Koszul
syzygies of degree $5$ \cite[Example~6.1]{DS}.  The ordered splitting type of
$E_{B_1}=F_{B_1}(-1)$ on a general line is $(2,3)$; equivalently,
\[
   F_{B_1}|_\ell
   \cong\OO_\ell(1-d_1^\ell)\oplus\OO_\ell(1-d_2^\ell).
\]
It follows that $h^0(\ell,F_{B_1}|_\ell)>0$ if and only if
$d_1^\ell\le1$, so the jumping scheme is $V_1(B_1)$.

Writing $n(F_1)_m:=\dim N(F_1)_m$, the Hilbert function of the Jacobian
algebra of a smooth sextic gives
\[
   n(F_1)_4=15,\qquad n(F_1)_5=18,\qquad n(F_1)_6=19.
\]
By \cite[Proposition~4.1]{DS}, a line $\ell$ belongs to $V_1(B_1)$ precisely
when at least one of the multiplication maps
\[
\begin{tikzcd}
N(F_1)_4 \arrow[r, "\cdot\alpha_\ell"] & N(F_1)_5
\end{tikzcd}
\qquad\text{or}\qquad
\begin{tikzcd}
N(F_1)_5 \arrow[r, "\cdot\alpha_\ell"] & N(F_1)_6
\end{tikzcd}
\]
fails to be injective.  The socle degree is $3(d-2)=12$, and
\cite[Remark~4.3]{DS} shows that noninjectivity of the first map implies
noninjectivity of the second.  Thus $V_1(B_1)$ is the degeneracy locus of
the second map.

More concretely, a nonzero class $[A]\in N(F_1)_5$ in its kernel satisfies
\[
   A\alpha_\ell=\sum_{i=0}^2 B_i\partial_iF_1\in(J_{F_1})_6
\]
for linear forms $B_i$.  Restriction to
$\ell=\{\alpha_\ell=0\}$ gives a nontrivial relation among the restrictions
of the partial derivatives of $F_1$.  Hence the same degeneracy locus is
defined by failure of injectivity of
\[
\begin{tikzcd}
\HH^0\bigl(\ell,\OO_\ell(1)^3\bigr)
\arrow[r, "\nabla F_1"]
& \HH^0\bigl(\ell,\OO_\ell(6)\bigr).
\end{tikzcd}
\]
After parametrising a line in an affine chart of $\Pdual$, this is a
$7\times6$ matrix, and its seven maximal minors define the jumping scheme.

The final exact Macaulay2 computation in
Appendix~\ref{app:reduced-jumping} treats all three standard affine charts.
It shows that the complete projective degeneracy scheme is contained in the
first chart, where it has dimension $0$ and degree $171$.  Adjoining the
$2\times2$ minors of the Jacobian matrix gives the unit ideal; the Jacobian
criterion therefore shows that this zero-dimensional scheme is reduced.
Finally, adjoining the degree-$30$ binary resultant defining the dual curve
$B_1^\vee$ also gives the unit ideal.  Thus none of the $171$ jumping lines
is tangent to $B_1$.
\end{proof}

\begin{remark}\label{rem:nonreduced-example_h2}
The smooth sextic $B_0=\{F_0=0\}$, where
\[
 F_0=\prod_{i=0}^{5}(x_0-i\,x_1)
 +x_2^{2}\bigl(x_0^4+x_1^4+x_2^4
 +x_0x_1x_2(x_0+x_1+x_2)\bigr),
\]
also exhibits the number $171$: its jumping scheme is zero-dimensional of
degree $171$.  In this example, however, reducedness fails.  Indeed, the transverse line
$\ell_0=\{x_2=0\}$ is an explicit jumping line in the deeper determinantal stratum.
A direct calculation gives
\[
 h^0\bigl(\ell_0,F_{B_0}|_{\ell_0}\bigr)=2,
\]
or equivalently, the corresponding $7\times6$ restriction matrix has rank
$4$.  Hence $\ell_0$ lies in the singular locus of the vanishing of the minors.
Since that scheme is zero-dimensional, it is nonreduced at $\ell_0$.
\end{remark}

\begin{theorem}\label{thm:very-general_h2}
For a general K3 surface $(S,H)$ of degree $H^2=2$, the ramification locus 
\[ 
   R_S=\{[C]\in|H|_{\sm}:\HH^0(C,T_S|_C)\neq0\}
\] 
consists of exactly $171$ distinct points, all corresponding to jumping lines for the bundle $F$ of splitting type $\Oc_\ell\oplus\Oc_\ell(-3)$. In particular, the moduli map $\mu\colon|H|_{\sm}\to \Mmod_2$ is ramified.
\end{theorem}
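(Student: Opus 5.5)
By \Cref{lem:double-plane} and \Cref{lem:jump}, the ramification locus $R_S\subset|H|_{\sm}\cong\Pdual\smallsetminus B^\vee$ is identified with the set of transverse lines $\ell$ with $h^0(\ell,F|_\ell)\ge1$, i.e.\ with $V_1(B)\smallsetminus B^\vee$ in the notation of \cite{DS}. The plan is therefore to show that, for a general smooth sextic $B$, the jumping scheme $V_1(B)$ is reduced of length $171$ and disjoint from $B^\vee$. By \Cref{lem:F-stable}, $c_1(F)=-3h$. On a general line the splitting is $\Oc_\ell(-1)\oplus\Oc_\ell(-2)$, so on a jumping line it is $\Oc_\ell(a)\oplus\Oc_\ell(-3-a)$ with $a\ge0$. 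Reducedness will force $a=0$, since $h^0=1$ at a reduced point of the determinantal scheme, giving the splitting $\Oc_\ell\oplus\Oc_\ell(-3)$ claimed in the statement.

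\emph{Step 1: a determinantal family.} Let $U\subset\PP(V_{3,6})$ be the open set of smooth sextics. Over $U\times\Pdual$, I will define the morphism of vector bundles whose fibre at $(B,\ell)$ is $\HH^0(\ell,\Oc_\ell(1))^{3}\xrightarrow{\nabla F_6}\HH^0(\ell,\Oc_\ell(6))$, a $6\times7$ matrix, as in the proof of \Cref{prop:reduced-example_h2}. Its degeneracy locus $\mathcal V\subset U\times\Pdual$ is the relative jumping scheme, with fibre $V_1(B)$ over $B$. The expected codimension of the locus where a $7\to6$ map fails to be surjective is $7-6+1=2$, so each component of $\mathcal V$ has dimension at least $\dim U$. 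The Thom--Porteous class of this locus on $\Pdual$ gives the number $171$ (\Cref{Thom-Porteous}), but I only need the specialisation argument below.

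\emph{Step 2: spreading out from the example.} By \Cref{prop:reduced-example_h2}, the fibre of $\mathcal V\to U$ over $B_1$ is zero-dimensional, reduced, of length $171$, and disjoint from $B_1^\vee$. Upper semicontinuity of fibre dimension for the proper map $\mathcal V\to U$ ($\Pdual$ is proper) then shows that the fibres are finite over a neighbourhood of $B_1$. Since each component has dimension at least $\dim U$, the map $\mathcal V\to U$ is there a quasi-finite proper morphism from a determinantal scheme of the expected codimension. Such a scheme is Cohen--Macaulay (Eagon--Northcott), hence flat over the smooth base $U$ by miracle flatness, and therefore finite flat of degree $171$ near $B_1$.

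\emph{Step 3: generic reducedness and transversality.} Since $\mathcal V\to U$ is finite flat and its fibre over $B_1$ is reduced, the map is étale at that fibre. The set of $B$ with non-reduced fibre is a closed subset (the branch locus) not containing $B_1$, so a general fibre consists of $171$ distinct points. The incidence $\{(B,\ell):\ell\in V_1(B)\cap B^\vee\}$ is closed in $\mathcal V$ and proper over $U$, and it misses the fibre over $B_1$; hence its image is a proper closed subset of $U$. For $B$ outside these closed sets, all $171$ jumping lines are transverse, so $R_S$ has exactly $171$ points, each with $h^0=1$ and splitting type $\Oc_\ell\oplus\Oc_\ell(-3)$, and $\mu$ is ramified.

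\emph{Main obstacle.} The expected difficulty is the flatness and dimension claim in Step 2: one must ensure that $\mathcal V$ has no components of excess dimension and that the scheme structure from the $6\times 6$ minors is the one to which Eagon--Northcott applies. I would first verify that the codimension is exactly $2$ near the fibre over $B_1$. This follows from finiteness of that fibre together with the lower bound on component dimensions. From this, Cohen--Macaulayness and flatness follow, with the irreducibility of $U$ used to pass from "near $B_1$" to "general $B$".
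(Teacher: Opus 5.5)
Your proposal is correct and follows essentially the same route as the paper's proof. Both set up a determinantal jumping scheme over the smooth sextics and spread out from the explicit sextic $B_1$ of \Cref{prop:reduced-example_h2}, using finiteness, Eagon--Northcott and miracle flatness to get a finite \'{e}tale cover of degree $171$; both then remove the tangency locus and use reducedness to exclude the stratum with $h^0(\ell,F|_\ell)\ge2$.

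The one difference is the matrix: you use $\nabla F_6\colon\HH^0(\ell,\Oc_\ell(1))^{3}\to\HH^0(\ell,\Oc_\ell(6))$ instead of the universal $\rho$ of \Cref{lem:jump}. Both are presentations of $R^1$ of the universal $F|_\ell$, so the two degeneracy schemes coincide. Note only that your map goes from a $6$-dimensional to a $7$-dimensional space, so the condition is failure of injectivity rather than surjectivity; the maximal minors and the codimension $2$ are unchanged.
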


\begin{proof}
Let $U\subset\PP^{27}$ be the locus of smooth sextics. By
\Cref{lem:double-plane}, the ramification locus $R_{S_B}$ is identified
with the transverse jumping lines of
$F_B=T_{\PP^2}(-\log B)$.  Let
\[
   \mathscr J\subset U\times\Pdual
\]
be the determinantal scheme on which the universal version of the map
$\rho$ in \Cref{lem:jump} has rank at most $4$, and let
$\pi_U\colon\mathscr J\to U$ be the first projection.  It has expected
codimension $(5-4)(6-4)=2$.

By \Cref{prop:reduced-example_h2}, the fibre over $B_1$ is finite,
reduced, and disjoint from the relative tangency divisor.  The
quasi-finite locus of the proper morphism $\pi_U$ is open; removing the proper
image of its complement, we may therefore shrink $U$ around $B_1$ so that
$\pi_U$ is finite.  Along the fibre over $B_1$ the maximal-minor ideal has
height $2$, and hence is Cohen--Macaulay by the Eagon--Northcott theorem.
Since $U$ is smooth, miracle flatness shows, after shrinking once more,
that $\pi_U$ is finite and flat.  Its fibre over $B_1$ is reduced, hence
\'{e}tale, so openness of the \'{e}tale locus and properness allow us to
shrink to a nonempty open subset $U_0\subset U$ over which $\pi_U$ is finite
\'{e}tale.  We may also remove the proper image of the intersection with
the relative tangency divisor.  Consequently, for every $B\in U_0$, the
jumping scheme consists of $171$ distinct lines.

Finally, a jumping line $\ell$ has splitting type $\Oc_\ell(k)\oplus\Oc_\ell(-3-k)$ with $k\ge1$ (equivalently $h^0(\ell,F_B|_\ell)\ge2$) exactly when $\rank\rho_\ell\le3$. The locus $\{\rank\rho\le3\}$ is contained in the singular locus of the jumping scheme $\{\rank\rho\le4\}$ \cite[Ch.~II, \S2]{ACGH}, which is empty for a general sextic since we proved above it is reduced. Hence every jumping line has $h^0(\ell,F_B|_\ell)=1$ and splitting type exactly $\Oc_\ell\oplus\Oc_\ell(-3)$.
\end{proof}

It would be interesting to know if these $171$ jumping lines have any interpretation in terms of classical invariants of sextics, e.g.,\ the $324$ bitangents or the $72$ flex lines.

\begin{remark}
\label{Thom-Porteous}
As a sanity check, in this remark we give another proof that the zero-dimensional scheme of jumping lines has length $171$, assuming only that it is finite for a special sextic, possibly nonreduced. By our assumption there is a dense open subset $U_0\subseteq U$ such that for $B\in U_0$, the ramification locus $R_{S_B}$ is finite and nonempty. For a general K3 surface, the corresponding sextic lies in $U_0$, meaning $R_S$ is a zero-dimensional scheme. Because the jumping locus has the expected codimension $2 = \dim\Pdual$ for $B \in U_0$, its length is exactly the Thom--Porteous class $\int_{\Pdual} c_2(\mathcal N-\mathcal T)$, where
$\mathcal T = q_*p^*T_{\PP^2}$ and $\mathcal N = q_*p^*N_B$ are
pushforwards from the point-line incidence variety
$\mathcal I \subset \PP^2 \times \Pdual$ with projections $p$ and $q$.
The universal line $\mathcal I\subset\PP^2\times\Pdual$ is a divisor of
class $\OO(1,1)$, giving the resolution
\[
\begin{tikzcd}[column sep=small]
0 \arrow[r] & \OO_{\PP^2\times\Pdual}(-1,-1) \arrow[r]
& \OO_{\PP^2\times\Pdual} \arrow[r]
& \OO_{\mathcal I} \arrow[r] & 0.
\end{tikzcd}
\]
For $E=T_{\PP^2}$, tensoring this resolution with $p^*E$ remains exact because $E$ is locally free. For $E=N_B=\OO_B(6)$, it remains exact because the universal incidence equation is a non-zero-divisor on $B\times\Pdual$, or equivalently because the relevant $\Tor_1$ vanishes. In either case, taking the K-theoretic pushforward $q_!$ gives
\[
   [q_*(p^*E|_{\mathcal I})]
   =h^0(E)[\OO_{\Pdual}]
   -h^0(E(-1))[\OO_{\Pdual}(-1)],
\]
provided the higher cohomology of $E$ and $E(-1)$ vanishes.

For $E = T_{\PP^2}$, we have $h^0(T_{\PP^2})=8$ and $h^0(T_{\PP^2}(-1))=3$. This gives $[\mathcal T] = 8[\OO] - 3[\OO(-1)]$, so its total Chern class is $c_t(\mathcal T) = (1-h)^{-3} = 1 + 3h + 6h^2$, where $h=c_1(\OO_{\Pdual}(1))$. For $E = N_B = \OO_B(6)$, we use the exact sequence $0 \to \OO_{\PP^2} \to \OO_{\PP^2}(6) \to \OO_B(6) \to 0$ to find $h^0(\OO_B(6))=27$ and $h^0(\OO_B(5)) = 21$. This yields $[\mathcal N] = 27[\OO] - 21[\OO(-1)]$, so $c_t(\mathcal N) = (1-h)^{-21} = 1 + 21h + 231h^2$. The total Chern class of the virtual bundle $\mathcal N-\mathcal T$ is therefore
\[
   c_t(\mathcal N-\mathcal T)\ =\ \frac{1+21h+231h^2}{1+3h+6h^2}\ =\ 1+18h+171h^2.
\]
The degree-$2$ component is $171$, meaning the jumping scheme has length $171$. However, to deduce that it is reduced, that is, that the jumping lines are all distinct, one must further use \Cref{prop:reduced-example_h2} and the proof of \Cref{thm:very-general_h2}.
\end{remark}

\begin{remark}\label{rem:degree2_vectorbundle}
   The stable rank $2$ bundle $E = F(3)$ of odd degree $c_1(E)=3$ can be normalised to $E_{\mathrm{norm}} = E(-2) = F(1)$, which has $c_1 = -1$ and $c_2 = 19$. By results of Hulek \cite{Hulek} (see also \cite[p.\,119]{OSS} and \cite{DK1, DK2}), the locus of lines of the second kind forms a curve of degree $2(c_2(E_{\mathrm{norm}})-1) = 36$ in the dual plane $\Pdual$. The $171$ jumping lines found in \Cref{thm:very-general_h2} correspond geometrically to the singularities of this degree-$36$ curve.
\end{remark}

\subsection{Degree four K3s}

\subsubsection{Finiteness}

In this subsection $(S,H)$ is a K3 surface with $H^2=4$ and $\Pic S\cong\ZZ[H]$; equivalently $S\subset\PP^3$ is a smooth quartic surface with $H=\OO_S(1)$. Quasi-finiteness of the moduli map $\mu\colon|H|_{\sm}\to\Mmod_3$ is established in \cite{CG}. We record here a partial geometric argument for the primitive system $|H|$ as it highlights the difficulty of the problem.

Consider a generically smooth, isotrivial family of hyperplane sections $T\subset|H|$. Under the isomorphism $\HH^0(S,H)\cong\HH^0(\PP^3,\OO_{\PP^3}(1))$ it is a family of planes cutting $S$. Over the normalisation of $T$ (a smooth curve) the universal $\PP^2$-bundle of ambient planes has trivial Brauer class by Tsen's theorem, hence is Zariski-locally the projectivisation of a rank-$3$ bundle; so in a neighbourhood $U\subset T$ of a point parametrising a singular section we may regard $U$ as a family of quartic curves in a \emph{fixed} plane $\PP^2$. By the following well-known lemma, every smooth member of this family is a $\mathrm{PGL}_3$-translate of one fixed curve $C\subset\PP^2$.

\begin{lemma}\label{lem:quartic-orbit}
    Let $C,D\in|\OO_{\PP^2}(d)|$ be smooth plane curves of degree $d\ge4$. If $C\cong D$, then there is $g\in\mathrm{PGL}_3$ with $g(C)=D$.
\end{lemma}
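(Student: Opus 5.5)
The plan is to show that any abstract isomorphism $\phi\colon C\to D$ carries the plane embedding of $C$ to that of $D$. This rests on the classical fact that a smooth plane curve of degree $d\ge4$ has a unique $g^2_d$. Once that is known, $\phi$ transports the linear system $|\OO_C(1)|$ to $|\OO_D(1)|$. The key steps, in order, are as follows.

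\textbf{Step 1: $\OO_C(1)$ is determined by the abstract curve.} By adjunction $\omega_C\cong\OO_C(d-3)$, so $\OO_C(1)$ is a root of $\omega_C$ of order $d-3$. This alone does not pin it down, since roots are unique only up to torsion. So we need the stronger statement that $\OO_C(1)$ is the unique line bundle of degree $d$ with $h^0\ge3$. For $d=4$ this is immediate: $C$ is a canonical genus-$3$ curve, $\OO_C(1)=\omega_C$, and any $g^2_4$ on a non-hyperelliptic genus-$3$ curve is canonical by Riemann--Roch and Clifford. For $d\ge5$ I would invoke the classical theorem of Noether and Hartshorne (see e.g.\ ACGH, Ch.~I Exercise or Hartshorne's ``Generalized divisors'' / Namba): a smooth plane curve of degree $d\ge4$ has a unique $g^2_d$, and its gonality is $d-1$. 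The sketch of that argument runs as follows. A base-point-free $g^2_d$ gives a map to $\PP^2$ of degree $1$ onto a plane curve of degree $d$; otherwise it factors with degree $\ge2$ onto a curve of degree $\le d/2$, which would produce a pencil of degree $<d-1$ and contradict the gonality. In the birational case the image has arithmetic genus $\binom{d-1}{2}=g(C)$, so it is smooth, and the map is an embedding. Then $\omega_C=L^{d-3}$ also holds for the new bundle $L$. Finally, comparing $h^0$ of $L^{d-3}$ and of $\OO_C(d-3)$ through the dimension counts of plane curves (Max Noether's $\mathrm{AF}+\mathrm{BG}$) forces $L\cong\OO_C(1)$.

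\textbf{Step 2: conclude.} Given Step 1, we have $\phi^*\OO_D(1)\cong\OO_C(1)$. Both embeddings are complete linear systems, since $\HH^0(\PP^2,\OO(1))\to\HH^0(C,\OO_C(1))$ is an isomorphism when $d\ge2$ (because $\HH^1(\OO_{\PP^2}(1-d))=0$). So $\phi^*$ induces a linear isomorphism $\HH^0(D,\OO_D(1))\to\HH^0(C,\OO_C(1))$, i.e.\ an element $g\in\mathrm{PGL}_3$ with $g|_C=\phi$ composed with the embeddings. Hence $g(C)=D$.

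The main obstacle is Step 1 for $d\ge5$, namely the uniqueness of the $g^2_d$. Since the lemma is stated as well known, I would cite it rather than reprove it. If a self-contained argument is wanted, I would first prove that the gonality equals $d-1$: a pencil of degree $k$ given by $k$ points imposing fewer conditions on $|\OO_C(d-3)|=|\omega_C|$ forces those points to be collinear, which needs $k\ge d-1$. The case analysis above then finishes the proof.
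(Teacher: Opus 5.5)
Your proposal follows the paper's proof. Both cite the uniqueness of the $g^2_d$ on a smooth plane curve of degree $d\ge4$ (the paper cites ACGH, p.~56, Exercise~18), deduce $\phi^*\OO_D(1)\cong\OO_C(1)$, and conclude because $\HH^0(\PP^2,\OO_{\PP^2}(1))\to\HH^0(C,\OO_C(1))$ is an isomorphism. The one weak spot is the last step of your optional uniqueness sketch: $L^{d-3}\cong\omega_C\cong\OO_C(d-3)$ already holds, so comparing their $h^0$ gives nothing. The standard finish is that for $D\in|L|$ Riemann--Roch gives $h^0(C,\omega_C\otimes L^{-1})=g-d+2$, so $D$ imposes only $d-2$ conditions on plane curves of degree $d-3$; this forces $D$ to be cut out by a line, i.e.\ $L\cong\OO_C(1)$.
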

\begin{proof}
    By \cite[p.~56, Exercise~18]{ACGH}, $\OO_C(1)$ is the unique $g^2_d$ on $C$. Hence if $\phi\colon C\to D$ is an isomorphism then $\phi^*\OO_D(1)\cong\OO_C(1)$, and the claim follows since $\HH^0(C,\OO_C(1))\cong\HH^0(\PP^2,\OO_{\PP^2}(1))$.
\end{proof}

Thus $U_{\sm}$ lies in the $\mathrm{PGL}_3$-orbit of a fixed smooth plane quartic $C$, and $U$ lies in the closure of that orbit inside $|\OO_{\PP^2}(4)|$. To rule out such a family $T$ on a quartic K3 of Picard rank one, one may therefore study the $\mathrm{PGL}_3$-orbit closures of smooth plane quartics, or equivalently, the limits arising in the closure of $\mathrm{PGL}_3$ inside the space of $3\times3$ matrices. This is the subject of the work of several papers of Aluffi--Faber \cite{aluffifaber0, aluffifaber1,aluffifaber2}.

The relevant orbits are classified in \cite[Section~1]{aluffifaber0} (see the appendix of \cite{aluffifaber1} for a summary): it suffices to treat one-dimensional connected orbits, which are copies of $\mathbb G_m$ or $\mathbb G_a$ in $\mathrm{PGL}_3$. All but one type of limit curve contain a line or a smooth conic, i.e.,\ an irreducible component isomorphic to $\PP^1$, which cannot lie on a K3 of Picard rank one. The sole exception is the type $y^d+\lambda z^a x^{d-a}$ (case 11 of \cite[Appendix]{aluffifaber1}), which for $d=4$ is the higher cusp $y^4=zx^3$; it arises exactly when the smooth quartic whose orbit we consider has a quadriflex line. We did not find a way to rule out this curve from appearing as a limit of an isotrivial family in $|\OO_S(1)|$ on a quartic K3 of Picard rank one; it is, however, excluded as a consequence of \cite{CG}.

\subsubsection{Ramification}

\begin{corollary}\label{cor:quartic-unramified}
Let $S\subset\PP^3$ be a general quartic surface. Then for every smooth hyperplane section $C\in|\Oc_S(1)|$, one has
\[
   \HH^0(C,T_S|_C)=0.
\]
Consequently, the moduli map $\mu\colon|\Oc_S(1)|_{\sm}\to\Mmod_3$ is unramified.
\end{corollary}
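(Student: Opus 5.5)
This is \Cref{thm:vanishing} specialised to $n=3$ and $d=4$, combined with \Cref{lem:kernel}. The plan has three steps.

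First, apply \Cref{thm:vanishing} with $X=S\subset\PP^3$ a general quartic, so $n=3$ and $d=4$. The hypotheses $n\ge3$ and $d\ge4$ hold, so every smooth hyperplane section $Y=C=S\cap\{L=0\}$ satisfies $\HH^0(C,T_S|_C)=0$. The only case-specific input is already inside that theorem: the exceptional bound $\dim\mathcal I_{3,4}\le19$ of \Cref{lem:incidence-bound}, which through \Cref{prop:strata-two-parts} gives codimension $\ge4>3$ for the locus $B_{3,4}$. Here I rely on that argument rather than redoing it. The one point worth checking is that the quartics excluded by ``general'' form a proper closed subset of $\PP(V_{4,4})$, namely $\overline{\pr_1(\Rc)}$ together with the discriminant. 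A general quartic surface therefore avoids this subset, and the conclusion holds for all $[L]$ simultaneously, not just for a general $[L]$.

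Second, translate the vanishing into unramifiedness. A smooth plane quartic section has genus $3\ge2$, and $\Oc_S(1)=H$ satisfies the hypotheses of \Cref{lem:kernel}. That lemma gives $\ker d\mu_{[C]}\cong\HH^0(C,T_S|_C)=0$ at every $[C]\in|\Oc_S(1)|_{\sm}$.

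Third, conclude. A morphism of finite type between varieties in characteristic zero with injective differential at every point is unramified. Some care is needed because $\Mmod_3$ is only a coarse space, so one should argue with the moduli stack, or equivalently with the Kodaira--Spencer map, exactly as the paper does in interpreting $d\mu$ via the normal bundle sequence. Since the paper already identifies $\ker d\mu$ in this sense, I treat this as the definition of unramifiedness being used.

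The main (mild) obstacle is this stacky subtlety, together with making sure the generality in \Cref{thm:vanishing} is uniform over all smooth sections. Both points are handled by the proof of \Cref{thm:vanishing}, since it produces a single proper closed bad locus in the space of quartics.
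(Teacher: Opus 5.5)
Your proposal is correct and is exactly the paper's argument: specialise \Cref{thm:vanishing} to $(n,d)=(3,4)$, then use \Cref{lem:kernel} to identify $\ker d\mu_{[C]}$ with $\HH^0(C,T_S|_C)$. Your two extra remarks are accurate elaborations of points the paper leaves implicit: that the excluded quartics form a single proper closed subset, so the conclusion is uniform in $[L]$, and that $d\mu$ should be read via the stack or Kodaira--Spencer map.
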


\begin{proof}
Apply \Cref{thm:vanishing} with $(n,d)=(3,4)$, and then use \Cref{lem:kernel}.
\end{proof}

\section{Stability and ramification: Examples}\label{sec:stability-ramification}

As already observed in \Cref{sec:high-degree}, the stability of $T_S|_C$ on a K3 surface $S$ implies the unramifiedness of the moduli map $|\OO(C)|_{\sm} \rightarrow \Mmod_g$. Moreover, a careful study of the (semi)stability of $T_S|_C$ has been used in \cite{DH} to prove maximal variation for almost all polarisations $H$. In this section we further investigate how the (semi)stability of $T_S|_C$ interacts with the existence of sections and hence, for smooth $C$, with ramification of the moduli map. More precisely, we show that instability of the restriction is not equivalent to nonvanishing of $\HH^0(C,T_S|_C)$.
 
As for non-semistability, the situation is less clear. Following some
constructions in \cite{GO}, we provide explicit families of smooth
curves \(\{C_t\}\) contained in the ramification locus such that
\(T_S|_{C_t}\) is not semistable. These examples suggest that
non-semistability can indeed give rise to ramification.

\begin{example}
Two trivial examples to keep in mind. If $R\cong\PP^1$ and $R\subset S$, then $T_S|_R\cong\OO(-2)\oplus\OO(2)$, so $\HH^0(R,T_S|_R)=3$, and the restriction is not semistable. Similarly, if $E\subset S$ is a smooth genus-one curve, then the conormal sequence, together with $T_S\cong\Omega_S$, gives $\HH^0(E,T_S|_E)=1$, and the restriction is not stable.
\end{example}

\begin{lemma}\label{lem:nodalh0}
If $C\subset S$ is an integral rational curve whose only singularity is one ordinary node, then $\HH^0(\tsc) \neq 0$.
\end{lemma}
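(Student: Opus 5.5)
The plan is to produce a nonzero section of $\tsc$ directly from the conormal sequence. The key inputs are adjunction (using $K_S=\Oc_S$) and triviality of the dualizing sheaf of a nodal rational curve.

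First compute the numerical invariants. Since $C$ is integral with normalisation $\PP^1$ and a single ordinary node, its arithmetic genus is $p_a(C)=1$. By adjunction on the K3 surface, $\omega_C\cong\Oc_C(C)$ and $C^2=2p_a(C)-2=0$. Next I would show $\omega_C\cong\Oc_C$. Since $C$ is a Cartier divisor on a smooth surface, it is Gorenstein, so $\omega_C$ is a line bundle. It has degree $2p_a-2=0$, and $h^0(C,\omega_C)=p_a(C)=1$. A nonzero section of a degree-$0$ line bundle on an integral projective curve vanishes nowhere, so it trivialises the bundle. Hence $\omega_C\cong\Oc_C$, and therefore the conormal bundle is $\Oc_C(-C)\cong\omega_C^{\vee}\cong\Oc_C$. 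Concretely, the generator is the familiar form $dx/x$ on the nodal cubic model.

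Now use the conormal sequence of $C\subset S$:
\[
\begin{tikzcd}[column sep=small]
 \Oc_C(-C)\cong I_C/I_C^2 \arrow[r] & \Om_S|_C \arrow[r] & \Omega_C \arrow[r] & 0 .
\end{tikzcd}
\]
I would check that the left map is injective. It is an isomorphism onto its image at smooth points of $C$, because $C$ is reduced. Its kernel is therefore a subsheaf of the line bundle $\Oc_C(-C)$ supported at the node. Since $C$ is integral, a line bundle on it has no nonzero subsheaves with finite support, so the kernel vanishes. This yields an injection $\Oc_C\hookrightarrow\Om_S|_C$. Composing with $T_S\cong\Om_S$ gives a nonzero element of $\HH^0(\tsc)$.

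The only step needing care is the injectivity at the node, where $\Omega_C$ fails to be locally free. The torsion-freeness argument above handles it, so I expect no real obstacle. The crux is really the identification $\omega_C\cong\Oc_C$.
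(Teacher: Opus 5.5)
Your argument is correct, but it takes a different route from the paper's proof.

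\textbf{The paper's route.} The paper blows up $S$ at the node and works on the strict transform $C'\cong\PP^1$, which has $C'^2=-4$. This gives $h^0(\Omega_{S'}|_{C'})=h^0(\Oc_{\PP^1}(4))=5$. It then loses at most $2$ at the two points of $C'\cap E$, which yields $h^0(C',\nu^*\Om_S|_{C'})\ge 3$. Finally it loses at most $2$ more when passing from $\Om_S|_C\otimes(\nu|_{C'})_*\Oc_{C'}$ back to $\Om_S|_C$ across the node, leaving $h^0(\tsc)\ge 1$.

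\textbf{Your route.} You stay on $C$ and use the conormal sequence together with $\Oc_C(-C)\cong\omega_C^{\vee}\cong\Oc_C$. In effect, you extend to the singular case the conormal-sequence argument that the paper itself uses for smooth elliptic curves in the example just before this lemma.

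\textbf{Comparison.} Your version is shorter and exhibits the section explicitly, as the image of the generator of the conormal line $\Oc_C(-C)\cong\Oc_C$. Its only real input is $p_a(C)=1$. So it applies verbatim to any integral curve with $C^2=0$ on a K3 surface, for instance a cuspidal rational curve.

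The paper's count genuinely uses that the normalisation is $\PP^1$ and that the node is ordinary, so that one blow-up separates the two branches. Its by-product is the stronger bound $h^0(\nu^*\Om_S|_{C'})\ge 3$ on the normalisation. However, the same count for a rational curve with $\delta$ nodes only gives $h^0(\tsc)\ge 3-2\delta$. So it reaches no further than the one-node case, and your argument loses nothing by comparison.
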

\begin{proof}
Let $\nu\colon S' \ra S$ be the blow-up of $S$ at the node of $C$. Let $C' \in |H-2E|$ be the normalisation. Denote by $p$ and $q$ the intersection points of $E$ with $C'$. Then we have the following exact sequence
\begin{equation}
\label{equation3}
\begin{tikzcd}[column sep=small]
0 \arrow[r] & \nu^*\Omega_S|_{C'} \arrow[r]
& \Omega_{S'}|_{C'} \arrow[r] & \OO_p\oplus\OO_q \arrow[r] & 0.
\end{tikzcd}
\end{equation}
Now observe that $h^0(\Omega_{S'}|_{C'})=h^0(\OO_{C'}(-C'))=5$, since $C^2=0$ ($C$ is a curve of arithmetic genus $1$), and  $C'^2=0-4=-4$ in $S'$, and $C'$ is rational. Hence taking global sections in  \eqref{equation3} we find
\begin{equation*}
5 \geq h^0(\nu^* \Omega_S|_{C'})\geq 3.
\end{equation*}
Hence we get $5 \geq h^0(C',\nu^* \Omega_S|_{C'}) \simeq h^0(C',\nu|_{C'}^*\Omega_S|_C) \simeq h^0(C,\Omega_S|_C \otimes (\nu|_{C'})_*\OO_{C'}) \geq 3$. Denote by $x$ the node. We have
\[
\begin{tikzcd}[column sep=small]
0 \arrow[r] & \Omega_S|_C \arrow[r]
& \Omega_S|_C\otimes(\nu|_{C'})_*\OO_{C'}
\arrow[r] & \OO_x^{\oplus2} \arrow[r] & 0.
\end{tikzcd}
\]
From this and $T_S\cong\Omega_S$ we deduce that $h^0(C,T_S|_C)\geq 1$.
\end{proof}

\subsection{\texorpdfstring{$T_S|_C$ not stable yet $h^0(C,T_S|_C)=0$}{Unstable restriction with no sections}}\label{subsection:notstableh0}

The vanishing of $\HH^0(C,T_S|_C)$ is not equivalent to the stability of the restriction, as the following example shows.

Consider a general polarised $K3$ surface $(S,H)$ of degree $8$. Then by \cite[Theorem A]{GO} there exists a positive-dimensional family of curves in $|3H|$ such that $T_S|_C$ is not stable.  On the other hand, by Theorem \ref{prop:knutsen-normal}, $\HH^0(\tsc)=0$.

\subsection{\texorpdfstring{$T_S|_C$ not semistable and $h^0(C,T_S|_C)=1$ in degree 2}{Non-semistable restriction with one section in degree 2}}\label{sec:stability-degree2}
Let $(S,H)$ be a very general $K3$ surface of degree $2$. Consider curves of the form $C=S_1 \cap S_2$ where $S_1 \in |L+2H|$, and $S_2 \in |L+4H|$ as in \cite[Proposition~4.1]{GO}. These curves are isomorphic to their images in $S$ and form a 15-dimensional family of curves where $T_S|_C$ is not semistable. We have
\[
\begin{tikzcd}[column sep=small]
0 \arrow[r] & \Omega_\pi(L)|_C \arrow[r]
& \pi^*\Omega_S|_C \arrow[r] & L|_C \arrow[r] & 0.
\end{tikzcd}
\]
Taking determinants and using that $L\cdot C<0$, we obtain
$\Omega_{\pi}(L)|_C\simeq\OO_C(-L)$ and
$\HH^0(C,\OO_C(-L))\simeq\HH^0(C,\tsc)$. 
\begin{lemma}\label{lem:h0degree2} We have
\begin{equation*}
h^0(C,\OO_C(-L))=h^1(\PP(\Omega^1_S),6H+L)=h^1(S,\Omega_S(6H))=1.
\end{equation*}
\end{lemma}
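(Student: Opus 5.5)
We use the interpretation of the objects above from \cite{GO}. Let $\pi\colon P:=\PP(\Omega^1_S)\to S$ be the projective bundle with tautological line bundle $L=\OO_P(1)$, normalised so that $\pi_*L=\Omega_S$. Then $C$ is the complete intersection of surfaces $S_1\in|L+2H|$ and $S_2\in|L+4H|$ in the threefold $P$. This identification of $L$ is an assumption on the notation of \cite{GO}; it is consistent with the displayed sequence, where $\pi^*\Omega_S|_C\to L|_C$ is the tautological quotient. The plan is to compute $h^0(C,\OO_C(-L))$ from the Koszul resolution of $C$ twisted by $-L$. One then transfers the answer to $P$ by Serre duality, and to $S$ by pushing forward along $\pi$. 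The three equalities in \Cref{lem:h0degree2} are exactly these three steps.

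Since $C=S_1\cap S_2$ is a complete intersection, twisting its Koszul resolution by $-L$ gives
\[
0\to\OO_P(-3L-6H)\to\OO_P(-2L-2H)\oplus\OO_P(-2L-4H)\to\OO_P(-L)\to\OO_C(-L)\to0 .
\]
We need two inputs from the structure of the $\PP^1$-bundle. First, $R\pi_*\OO_P(-1)=0$, so $H^i(P,\OO_P(-L))=0$ for all $i$. Second, $\pi_*\OO_P(-2)=0$ and $R^1\pi_*\OO_P(-2)\cong\det\Omega_S^{\vee}\cong\OO_S$, because $K_S=0$. Hence
\[
H^i(P,\OO_P(-2L-aH))\cong H^{i-1}(S,\OO_S(-aH)),
\]
and for $a>0$ this vanishes for $i\le2$ by Kodaira vanishing together with $h^1(\OO_S)=0$. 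Split the resolution into two short exact sequences through the ideal sheaf $I_C$. The vanishing of $H^0$ and $H^1$ of $\OO_P(-L)$ gives $H^0(\OO_C(-L))\cong H^1(I_C(-L))$. The vanishing of $H^1$ and $H^2$ of the middle term gives $H^1(I_C(-L))\cong H^2(P,\OO_P(-3L-6H))$.

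For the second equality we use Serre duality on the threefold $P$. The canonical bundle is $\omega_P=\OO_P(-2)\otimes\pi^*(K_S\otimes\det\Omega_S)=\OO_P(-2L)$. Therefore
\[
h^2(\OO_P(-3L-6H))=h^1(\OO_P(L+6H)),
\]
which is the middle term in \Cref{lem:h0degree2}. For the third equality, $R^1\pi_*\OO_P(1)=0$ and $\pi_*\OO_P(L+6H)=\Omega_S(6H)$. The Leray spectral sequence then gives $h^1(P,L+6H)=h^1(S,\Omega_S(6H))$. Using $T_S\cong\Omega_S$ and Serre duality on $S$, this equals $h^1(S,T_S(-6H))$. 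The table of \cite[Lemma~3.7]{DedieuSernesi} recalled above gives the value $1$ for $n=6$.

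The main obstacle is checking the conventions rather than any computation. One must confirm that $L$ is the tautological class with $\pi_*L=\Omega_S$ and not a twist of it, that $S_1,S_2$ really meet in the complete intersection $C$ (as \cite{GO} asserts), and that the vanishings of the middle Koszul terms hold in the needed degrees. If $L$ were instead $\OO_P(1)$ twisted by a multiple of $H$, I would redo the same chase with the shifted twists; the duality step would still land on some $h^1(S,\Omega_S(mH))$, read off from the Dedieu--Sernesi table.
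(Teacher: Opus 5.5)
Your proof is correct. It reaches the same endpoint, $h^1(S,\Omega_S(6H))$ and the Dedieu--Sernesi table, by a different route from the paper's.

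\textbf{The paper's route.} The paper works entirely with global sections of positive twists:
\begin{itemize}
\item It applies Riemann--Roch on $C$, using $g(C)=37$, $\deg\OO_C(-L)=8$ and Serre duality with $\omega_C\cong\OO_C(6H)$, to get $h^0(C,\OO_C(-L))=h^0(C,\OO_C(6H+L))-28$.
\item It then restricts $\OO(6H+L)$ along $P\supset S_1\supset C$. This needs $h^0(S_1,2H)=6$ and $h^1(S_1,2H)=0$, which come from $S_1\to S$ being a blow-up, together with $h^1(P,4H)=0$.
\item Finally it uses Grothendieck--Riemann--Roch, $\chi(\Omega_S(6H))=52$, to write $h^0(S,\Omega_S(6H))=52+h^1(S,\Omega_S(6H))$. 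The numerical constants then cancel.
\end{itemize}

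\textbf{Your route.} Your Koszul chase is essentially the Serre-dual of that chain, carried out directly in cohomological degrees $1$ and $2$. It needs only $R\pi_*\OO_P(-1)=0$, $R^1\pi_*\OO_P(-2)\cong\OO_S$, Kodaira vanishing on $S$, and $\omega_P\cong\OO_P(-2L)$. It uses no genus or degree count on $C$, no Euler characteristic, and no geometry of $S_1$.

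\textbf{What each buys.} Yours gives a canonical isomorphism $\HH^0(C,\OO_C(-L))\cong \HH^1(P,\OO_P(L+6H))^\vee\cong\HH^1(S,\Omega_S(6H))^\vee\cong \HH^1(S,T_S(-6H))$, rather than an equality of dimensions obtained by cancelling constants. The paper's version stays with sections of positive line bundles and records intermediate values along the way, for instance $h^0(C,\OO_C(6H+L))=29$.

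\textbf{Your flagged convention.} The convention you identify as the main risk is settled by the paper itself. It uses $h^0(\OO_{\PP(\Omega_S)}(6H+L))=h^0(S,\Omega_S(6H))$, i.e.\ $\pi_*L=\Omega_S$, which is exactly your normalisation.
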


\begin{proof}
Since $g(C)=37$ and $\deg(L|_C)=8$, Riemann--Roch gives
\begin{equation}
\label{equation4}
h^0(C,\OO_C(-L))=h^1(C,\OO_C(-L))-28
 =h^0\bigl(C,\OO_C(6H+L)\bigr)-28.
\end{equation}
Since $C= S_1 \cap S_2$, $S_1 \in |L+2H|$,  $S_2 \in |L+4H|$, we have the following short exact sequence
\[
\begin{tikzcd}[column sep=small]
0 \arrow[r] & \OO_{S_1}(2H) \arrow[r]
& \OO_{S_1}(6H+L) \arrow[r]
& \OO_C(6H+L) \arrow[r] & 0.
\end{tikzcd}
\]
Using that $S_1 \rightarrow S $ is a blow-up, we get $h^0(S_1,\OO_{S_1}(2H))=h^0(S,2H)=2+4=6$.  $h^1(S_1,\OO_{S_1}(2H))=h^1(S,2H)=0$. Hence
\begin{equation}
\label{equation2}
h^0(C,6H+L|_C)=h^0(S_1,6H+L)-h^0(S_1,2H)=h^0(S_1,6H+L)-6.
\end{equation}
Now consider the following short exact sequence
\begin{equation}
\label{equation5}
\begin{tikzcd}[column sep=small]
0 \arrow[r] & \OO_{\PP(\Omega_S)}(4H) \arrow[r]
& \OO_{\PP(\Omega_S)}(6H+L) \arrow[r]
& \OO_{S_1}(6H+L) \arrow[r] & 0.
\end{tikzcd}
\end{equation}
Notice that
\begin{align*}
h^0(\OO_{\PP(\Omega_S)}(4H))&=h^0(S,4H)=18,\\
h^1(\OO_{\PP(\Omega_S)}(4H))&=0,
\end{align*}
and
\begin{align*}
h^0(\OO_{\PP(\Omega_S)}(6H+L))
   &=h^0(S,\Omega_S(6H))\\
   &=(6H)^2-20+h^1(S,\Omega_S(6H))\\
   &=52+h^1(S,\Omega_S(6H)).
\end{align*}
The last computation follows from Grothendieck--Riemann--Roch (see \cite[Theorem 3.1]{Totaro}). 
Hence from \eqref{equation2} and \eqref{equation5}  we find that
\begin{align*}
h^0(C,6H+L|_C)
   &=h^0(S_1,6H+L)-6\\
   &=52+h^1(S,\Omega_S(6H))-18-6\\
   &=28+h^1(S,\Omega_S(6H)).
\end{align*}
This, together with \eqref{equation4}, gives
\begin{equation*}
h^0(C,\OO_C(-L))=h^1(S,\Omega_S(6H))
 \bigl(=h^1(\PP(\Omega_S),6H+L)\bigr).
\end{equation*}
By \cite[Lemma 3.7]{DedieuSernesi}, we have that $\HH^1(S,\Omega_S(6H))=1$.
\end{proof}

\begin{remark}
Let $C \in |6H|$ be a smooth curve as above. Then $\mathrm{Cliff}(C) \geq 3$ from \cite[Lemma 8.3]{knutsen}. By \cite[Corollary 8.6]{CDS}, we have that for curves in $|6H|$ the moduli map is finite (not just generically finite) onto its image. However, \Cref{lem:h0degree2} shows that the map is ramified along a locus in $|6H|$ of dimension at least $15$.
\end{remark}

\subsection{\texorpdfstring{$T_S|_C$ not semistable and $h^0(C,T_S|_C)=1$ in degree 4}{Non-semistable restriction with one section in degree 4}}\label{sec:stability-degree4}

Let $S$ be a generic quartic in $\PP^3$. Consider the complete intersection curve $C=S_1 \cap S_2 \subset \PP(\Omega_S)$ constructed in \cite[Proposition~4.3(1)]{GO}. Recall that $S_1, S_2 \in |L+2H|$, and the image of $C$ in $S$ lives in $|4H|$. We have
\[
\begin{tikzcd}[column sep=small]
0 \arrow[r] & \Omega_\pi(L)|_C \arrow[r]
& \pi^*\Omega_S|_C \arrow[r] & L|_C \arrow[r] & 0.
\end{tikzcd}
\]
Taking determinants gives $\Omega_{\pi}(L)|_C\simeq\OO_C(-L)$. In
particular, $\HH^0(C,\OO_C(-L))\simeq\HH^0(C,\tsc)$ because
$L\cdot C<0$. 
\begin{lemma}\label{lem:h0degree4}
In this setting we have
\begin{equation*}
h^0(C,\OO_C(-L))=h^1(\PP(\Omega^1_S),4H+L)=h^1(S,\Omega_S(4H))=1.
\end{equation*}

\end{lemma}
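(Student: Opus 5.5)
We follow the proof of \Cref{lem:h0degree2} line by line, changing only the numerical data. Here $H^2=4$, $C=S_1\cap S_2$ with $S_1,S_2\in|L+2H|$, and the image of $C$ lies in $|4H|$. So $C^2=16H^2=64$ and $g(C)=33$, and $\deg(L|_C)=L\cdot(L+2H)^2$ is computed on $\PP(\Omega_S)$. We use $L^3=c_1^2-c_2=-24$, $L^2\cdot H=0$ (as $c_1(\Omega_S)=0$) and $L\cdot H^2=H^2=4$, which give $\deg L|_C=-24+16=-8$. Indeed $L\cdot C<0$, consistent with the setup.

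\textbf{Step 1: Riemann--Roch on $C$.} Since $\omega_C=\OO_C(4H)$ by adjunction, Riemann--Roch gives
\[
h^0(C,\OO_C(-L))=h^1(C,\OO_C(-L))+8-32=h^0(C,\OO_C(4H+L))-24.
\]

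\textbf{Step 2: restrict from $S_1$.} Use the Koszul-type sequence
\[
0\to\OO_{S_1}(4H+L-(L+2H))=\OO_{S_1}(2H)\to\OO_{S_1}(4H+L)\to\OO_C(4H+L)\to0 .
\]
As $S_1\to S$ is birational (a blow-up, as in \cite{GO}), we have $h^0(\OO_{S_1}(2H))=h^0(S,2H)=10$ and $h^1=0$.

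\textbf{Step 3: pass to $\PP(\Omega_S)$.} Use
\[
0\to\OO_{\PP(\Omega_S)}(2H)\to\OO_{\PP(\Omega_S)}(4H+L)\to\OO_{S_1}(4H+L)\to0 .
\]
Here $h^0(\OO(2H))=10$, and $h^1(\OO(2H))=h^1(S,2H)=0$. Moreover $h^2(\OO(2H))=h^2(S,2H)=0$, which makes the $h^1$'s match: $h^1(\OO_{S_1}(4H+L))=h^1(\PP(\Omega_S),4H+L)=h^1(S,\Omega_S(4H))$. For the sections, $h^0(\OO_{\PP}(4H+L))=h^0(S,\Omega_S(4H))$, and by Riemann--Roch together with $h^2(\Omega_S(4H))=h^0(\Omega_S(-4H))=0$ this equals $\chi+h^1=(4H)^2-20+h^1=44+h^1(S,\Omega_S(4H))$, as in \cite[Theorem 3.1]{Totaro}.

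\textbf{Step 4: assemble.} We need the $H^1$ terms of $\OO_{S_1}(4H+L)$ to drop out of the $h^0$ count in Step 2. The cleanest way is to run the argument at the level of $H^1$ instead, which is exactly the middle equality in the statement. We get $h^0(C,\OO_C(4H+L))=44+h^1-10-10=24+h^1$, and hence $h^0(C,\OO_C(-L))=h^1(S,\Omega_S(4H))$.

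\textbf{Step 5: the value.} Finally, $h^1(S,\Omega_S(4H))=h^1(S,T_S(-4H))=1$ by the table of \Cref{prop:knutsen-normal} (the entry $L^2=4$, $n=4$) together with Serre duality.

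The main obstacle is Step 4. The surjectivity on $H^0$ in the two restriction sequences needs $h^1(\OO_{S_1}(2H))=0$ and $h^1(\OO_{\PP}(2H))=0$. Conversely, the failure of surjectivity onto $H^0(\OO_C(4H+L))$ is governed by $h^1(\OO_{S_1}(4H+L))$. This term has to be identified with $h^1(\PP(\Omega_S),4H+L)$ via the vanishing of $h^2(\OO_{\PP}(2H))$, and we must make sure it is not double counted. I would check this carefully using the long exact sequences, exactly mirroring the degree-$2$ case, where the same bookkeeping produced $28+h^1$.
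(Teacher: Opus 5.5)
Your proposal is correct and follows the paper's own route. The paper omits the details, saying only that the argument mirrors Lemma~\ref{lem:h0degree2} and that the value $h^1(S,\Omega_S(4H))=1$ comes from \cite[Theorem~3.6]{DedieuSernesi}; your numerical data ($g(C)=33$, $\deg L|_C=-8$, $h^0(S,2H)=10$, $\chi(\Omega_S(4H))=44$) are all right.

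The obstacle you flag in Step~4 is not real. The cokernel of $H^0(\OO_{\PP(\Omega_S)}(4H+L))\to H^0(\OO_{S_1}(4H+L))$ injects into $H^1(\OO_{\PP(\Omega_S)}(2H))=0$, and the cokernel of $H^0(\OO_{S_1}(4H+L))\to H^0(\OO_C(4H+L))$ injects into $H^1(\OO_{S_1}(2H))=0$. So $h^1(\OO_{S_1}(4H+L))$ never enters, and your count $44+h^1-10-10=24+h^1$ is already complete.
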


The proof is identical in spirit to that of \Cref{lem:h0degree2}, using \cite[Theorem~3.6]{DedieuSernesi} to conclude that $h^1(S,\Omega_S(4H))=1$. We omit the details.

\section{Cohomology jumping loci}\label{section:jumpingloci}

Let $S$ be a complex projective $K3$ surface and let $H$ be an ample,
base-point-free line bundle on $S$; we do not assume that $H$ is
primitive. Then
\[
|H|\cong\PP^g,\qquad g=1+\frac{H^2}{2}\ge2.
\]
The results of the preceding sections naturally lead to the study of the loci where the dimension of $\HH^0(C,T_S|_C)$ jumps. We first give them a determinantal description, which is valid over the whole linear system, including its singular members.

Set
\[
A_H:=\HH^1(S,T_S(-H)),\qquad B:=\HH^1(S,T_S),\qquad a_H:=\dim A_H.
\]
Multiplication by the universal equation defines a morphism of vector bundles on $|H|$
\begin{equation}\label{eq:universal-cup-product}
\begin{tikzcd}
A_H\otimes\Oc_{|H|}(-1) \arrow[r, "\Phi_H"]
& B\otimes\Oc_{|H|}.
\end{tikzcd}
\end{equation}
This is the global version over $|H|$ of the cup-product map studied in \Cref{lem:kernel}.
More explicitly, if $[C]=[s_C]\in |H|$, then the fibre of \eqref{eq:universal-cup-product} at $[C]$ is 
\[
\begin{tikzcd}[row sep=small]
A_H\otimes\langle s_C\rangle \arrow[r, "(\Phi_H)_{[C]}"] & B,\\
\alpha\otimes s_C \arrow[r, mapsto] & \alpha\cup s_C.
\end{tikzcd}
\]

\begin{proposition}\label{prop:jumping-determinantal}
For every $[C]\in |H|$ there is a canonical identification
\[
\HH^0(C,T_S|_C)\cong\ker(\Phi_H)_{[C]}.
\]
Consequently, for $k\ge1$ the cohomology jumping locus has the natural determinantal scheme structure
\[
S_k(H):=D_{a_H-k}(\Phi_H)
       =\{[C]\in |H|:h^0(C,T_S|_C)\ge k\},
\]
where $S_k(H)=\varnothing$ if $k>a_H$. In particular
\[
|H|\supseteq S_1(H)\supseteq S_2(H)\supseteq\cdots
\]
is a decreasing filtration by closed subschemes.
\end{proposition}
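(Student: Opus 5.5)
The plan is to run the argument of \Cref{lem:kernel} for an arbitrary (possibly singular) member of $|H|$, and then read off the determinantal structure from the standard theory of degeneracy loci of a morphism of vector bundles.

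\textbf{Step 1: the fibrewise identification.} Fix $[C]=[s_C]\in|H|$. Here $C$ is an arbitrary effective Cartier divisor in $|H|$, possibly singular or non-reduced. We still have the exact sequence
\[
0\to\Oc_S(-H)\xrightarrow{\ s_C\ }\Oc_S\to\Oc_C\to0.
\]
Since $T_S$ is locally free, tensoring with it stays exact. Taking cohomology gives
\[
0=\HH^0(S,T_S)\to\HH^0(C,T_S|_C)\xrightarrow{\partial}\HH^1(S,T_S(-H))\xrightarrow{\cup s_C}\HH^1(S,T_S),
\]
where $\HH^0(S,T_S)=0$ because $S$ is a K3 surface. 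So $\partial$ identifies $\HH^0(C,T_S|_C)$ with $\ker(\cup s_C)$.

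This kernel depends only on the line $\langle s_C\rangle$, because rescaling $s_C$ rescales the map. It therefore equals $\ker(\Phi_H)_{[C]}$, the fibre of $A_H\otimes\Oc_{|H|}(-1)\to B\otimes\Oc_{|H|}$ at $[C]$, once we identify $A_H\otimes\langle s_C\rangle\cong A_H$. One checks that this identification is canonical: it is independent of the choice of $s_C$, up to the canonical twist by the tautological line.

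\textbf{Step 2: the determinantal structure.} The fibre $(\Phi_H)_{[C]}$ is a linear map from an $a_H$-dimensional space, so
\[
\dim\ker(\Phi_H)_{[C]}\ge k \iff \rank(\Phi_H)_{[C]}\le a_H-k.
\]
Hence $\{h^0\ge k\}$ is the degeneracy locus $D_{a_H-k}(\Phi_H)$. We give it the scheme structure defined by the $(a_H-k+1)$-minors of $\Phi_H$, computed in local trivialisations. These minor ideals are independent of the trivialisation, so they glue to an ideal sheaf on $|H|$.

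\textbf{Step 3: the filtration.} If $k>a_H$, then $a_H-k<0$, so $D_{a_H-k}$ is empty by convention. This matches the fact that the kernel has dimension at most $a_H$. For the filtration, note that the ideal of $(r+1)$-minors is contained in the ideal of $r$-minors, by Laplace expansion. Therefore $D_{r-1}\subseteq D_r$, which gives $S_{k+1}(H)\subseteq S_k(H)$ as closed subschemes.

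The only mildly delicate point is the first step for singular or non-reduced members: we must ensure that no smoothness was used. The argument above only uses that $s_C$ is a non-zero-divisor, which holds since $S$ is integral and $s_C\neq0$. It also uses the vanishing $\HH^0(S,T_S)=0$, and nothing else.
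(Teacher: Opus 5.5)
Your proposal is correct and follows essentially the paper's argument. The paper runs the same restriction-sequence computation globally: it tensors the universal sequence on $S\times|H|$ with $p^*T_S$, uses cohomology and base change to identify $\Phi_H$ with the induced map on $R^1q_*$, and then restricts to the fibre over $[C]$, whereas you work fibre by fibre from the explicit formula $\alpha\otimes s_C\mapsto\alpha\cup s_C$. Your additional checks fill in details the paper leaves implicit: that $s_C$ is a non-zero-divisor even for singular or non-reduced members, that the identification is independent of rescaling $s_C$, and that the inclusion of minor ideals gives $S_{k+1}(H)\subseteq S_k(H)$.
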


\begin{proof}
Let $p\colon S\times |H|\to S$ and $q\colon S\times |H|\to |H|$ be the projections, and let $\mathcal C\subset S\times |H|$ be the universal divisor. Since
\[
\Oc_{S\times |H|}(-\mathcal C)
\cong p^*\Oc_S(-H)\otimes q^*\Oc_{|H|}(-1),
\]
the universal restriction sequence is
\[
\begin{tikzcd}[column sep=small]
0 \arrow[r]
& p^*T_S(-H)\otimes q^*\Oc_{|H|}(-1)
\arrow[r, "\cdot s_{\mathrm{univ}}"]
& p^*T_S \arrow[r] & p^*T_S|_{\mathcal C} \arrow[r] & 0.
\end{tikzcd}
\]
Applying cohomology and base change to $q$, together with the projection formula, gives
\[
\begin{aligned}
R^1q_*\bigl(p^*T_S(-H)\otimes q^*\Oc_{|H|}(-1)\bigr)
&\cong \HH^1(S,T_S(-H))\otimes\Oc_{|H|}(-1)
 = A_H\otimes\Oc_{|H|}(-1),\\
R^1q_*p^*T_S
&\cong \HH^1(S,T_S)\otimes\Oc_{|H|}
 = B\otimes\Oc_{|H|}.
\end{aligned}
\]
Thus the map on first higher direct images induced by multiplication by the universal section is precisely $\Phi_H$. On the fibre over $[C]$ it is the cohomology map associated with
\[
\begin{tikzcd}[column sep=small]
0 \arrow[r] & T_S(-H) \arrow[r, "\cdot s_C"]
& T_S \arrow[r] & T_S|_C \arrow[r] & 0.
\end{tikzcd}
\]
As $\HH^0(S,T_S)=0$, we conclude that
\[
\HH^0(C,T_S|_C)
\cong\ker\bigl((\Phi_H)_{[C]}\colon A_H\to B\bigr).
\]
The description by minors of $\Phi_H$ now gives the asserted scheme structure and filtration.
\end{proof}

Set
\[
S_k(H_{\sm}):=S_k(H)\cap |H|_{\sm}.
\]
Thus the subscript $\sm$ records that we restrict the linear system to its open locus of smooth members. On $|H|_{\sm}$, these determinantal loci are precisely the corank loci of the differential of the moduli map
\[
\begin{tikzcd}
{|H|_{\sm}} \arrow[r, "\mu"] & \Mmod_g.
\end{tikzcd}
\]
Indeed, the normal bundle sequence gives $\ker d\mu_{[C]}\cong\HH^0(C,T_S|_C)$, as in \Cref{lem:kernel}. Thus $S_1(H_{\sm})$ is the ramification locus of $\mu$, while the higher $S_k(H_{\sm})$ measure the size of its infinitesimal fibres.

\begin{proposition}\label{prop:jumping-codimension}
Let $1\le k\le g$, and assume that $S_k(H_{\sm})$ is nonempty. Then every irreducible component $W\subseteq S_k(H_{\sm})$ satisfies
\[
\codim_{|H|_{\sm}} W\ge k-2.
\]
If the moduli map $\mu$ is quasi-finite onto its image, then
\[
\codim_{|H|_{\sm}} W\ge k.
\]
\end{proposition}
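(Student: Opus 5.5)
The plan is to bound $\dim W$ by splitting it into a contribution from the image $\mu(W)$ and a contribution from the fibres of $\mu|_W$, and to control each piece separately. Set $m:=\dim W$. We must show $m\le g-k+2$ in general, and $m\le g-k$ when $\mu$ is quasi-finite.

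\emph{Step 1 (image).} Replace $W$ by its reduced structure and pass to a dense open subset $W^\circ\subseteq W$. On $W^\circ$ the points are smooth, $\mu|_{W^\circ}$ has constant rank, and that rank equals $\dim\overline{\mu(W)}$; this uses generic smoothness in characteristic $0$. For $w\in W^\circ$, \Cref{lem:kernel} (equivalently \Cref{prop:jumping-determinantal}) gives $\dim\ker d\mu_w=h^0(C_w,T_S|_{C_w})\ge k$. Since $\dim|H|=g$, this yields $\rank d\mu_w\le g-k$. Because $d(\mu|_{W^\circ})_w$ is the restriction of $d\mu_w$ to $T_wW^\circ$, its rank is also at most $g-k$. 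Hence $\dim\overline{\mu(W)}\le g-k$. By the fibre dimension theorem,
\[
m\le (g-k)+e,
\]
where $e$ is the dimension of a general fibre of $\mu|_W$. If $\mu$ is quasi-finite onto its image, then $e=0$, and this already proves $\codim W\ge k$.

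\emph{Step 2 (fibres of $\mu$ have dimension at most $2$).} For the unconditional bound it suffices to show that every irreducible component $F$ of a fibre of $\mu$ meeting $|H|_{\sm}$ has $\dim F\le 2$. Suppose instead that $\dim F\ge 3$, and fix a smooth genus-$g$ curve $C$ representing the common isomorphism class. After passing to a finite cover $\widetilde F\to F$, we can choose isomorphisms $C\xrightarrow{\sim}C_t$ for $t\in\widetilde F$. This gives a morphism $\Psi\colon C\times\widetilde F\to S$ whose slices $\psi_t=\Psi(-,t)$ are embeddings. Since $g\ge2$, the group $\operatorname{Aut}(C)$ is finite, so the maps $\psi_t$ are pairwise distinct up to finitely many coincidences. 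The curves $C_t$ cover $S$ (a $\ge1$-dimensional family of distinct curves does), so for a fixed $c\in C$ the evaluation $\widetilde F\to S$, $t\mapsto\psi_t(c)$, is dominant. Its general fibre therefore has dimension at least $\dim F-2\ge1$. We thus obtain a positive-dimensional family of non-constant, genuinely varying maps $C\to S$, all sending $c$ to a fixed general point $s\in S$. Mori's bend-and-break then produces a rational curve through $s$. Since $s$ is general, $S$ would be uniruled, which is absurd for a K3 surface. Hence $e\le 2$, and $m\le g-k+2$.

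\emph{Main obstacle.} The delicate point is Step 2. One must check that the family of maps through the fixed point really varies, so that bend-and-break applies; this is where finiteness of $\operatorname{Aut}(C)$ and the genuine distinctness of the curves $C_t$ are used. One must also handle the base change to $\widetilde F$ carefully, since the fibres of the coarse moduli map only parametrise isomorphism classes. If bend-and-break turns out to be awkward to invoke cleanly here, I would first try an infinitesimal version instead. The tangent space to $F$ at $[C]$ injects into $H^0(C,T_S|_C)$ as sections coming from an isotrivial deformation. Evaluating an $(\ge3)$-dimensional space of such sections at a general point of $C$ has a kernel, and I would try to show that this contradicts the non-uniruledness of $S$ via the same rational-curve argument.
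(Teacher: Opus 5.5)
Your proof is correct and follows the paper's argument. You bound $\dim\mu(W)\le g-k$ via the rank of $d\mu$ along $W$, then bound the fibres of $\mu$ by $2$ using that a K3 surface is not uniruled; in that second step you reprove via Mori's bend-and-break what the paper simply cites as \cite[Lemma~2.3]{CG}.

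There is one slip, and it is harmless. Dominance of $t\mapsto\psi_t(c)$ for a fixed $c$ does not follow merely from the $C_t$ covering $S$, but you do not need it. Every nonempty fibre of that map has dimension at least $\dim\widetilde F-2\ge1$, and the points $\psi_t(c)$ fill a dense subset of $S$ as $(c,t)$ varies, so bend-and-break already gives rational curves through a dense set of points.
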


\begin{proof}
For $[C]\in |H|_{\sm}$, the normal bundle sequence gives
\[
\begin{tikzcd}[column sep=small]
0 \arrow[r] & \HH^0(C,T_S|_C) \arrow[r]
& \HH^0(C,N_{C/S}) \arrow[r, "d\mu_{[C]}"]
& \HH^1(C,T_C).
\end{tikzcd}
\]
Since $T_{|H|_{\sm},[C]}\cong\HH^0(C,N_{C/S})$ and $\dim |H|_{\sm}=g$, the differential has rank at most $g-k$ along $W$. In characteristic zero, the generic rank theorem therefore gives
\[
\dim\mu(W)\le g-k.
\]

It remains to bound the fibres. Let $T$ be a positive-dimensional irreducible component of a fibre of $\mu$. After a generically finite base change, the corresponding family of smooth curves becomes birationally trivial, so its evaluation map gives a dominant rational map
\[
C\times T'\dashrightarrow S.
\]
Here dominance follows because a positive-dimensional family of distinct smooth members of $|H|$ sweeps out $S$. Since a $K3$ surface is not uniruled, \cite[Lemma~2.3]{CG} gives $\dim T'=\dim T\le2$. Hence
\[
\dim W\le\dim\mu(W)+2\le g-k+2,
\]
which proves $\codim_{|H|_{\sm}}W\ge k-2$. If $\mu$ is quasi-finite, then $\dim W=\dim\mu(W)$, and the stronger bound follows.
\end{proof}

We now conclude with some speculative remarks connecting the preceding results with the examples in earlier sections.

\begin{remark}[Expected codimension and excess degeneracy]\label{rem:jumping-expected}
The description by $\Phi_H$ suggests a first expected-codimension calculation. Since $\dim B=h^1(S,T_S)=20$, the expected codimension of $S_k(H)=D_{a_H-k}(\Phi_H)$ is
\[
k(20-a_H+k)
\]
whenever $1\le k\le a_H$ and $\Phi_H$ is generically injective. The generic injectivity is exactly maximal variation, which holds for ample systems on $K3$ surfaces. The map $\Phi_H$ is nevertheless very far from a general morphism between bundles - the actual loci may therefore have excess dimension, as the degree-$2$ example shows.

There is a second expected-codimension calculation on $|H|_{\sm}$. Here we regard $\Mmod_g$ as the smooth Deligne--Mumford stack of curves, rather than its coarse moduli space. The dual differential
\[
\begin{tikzcd}
\mu^*\Omega_{\Mmod_g} \arrow[r, "(d\mu)^\vee"] & \Omega_{|H|_{\sm}}
\end{tikzcd}
\]
has source rank $3g-3$ and target rank $g$ and the locus where its rank is $\le g-k$ has expected codimension
\[
k(2g-3+k).
\]
For $k=1$ this equals $2g-2$: it is $2$ when $g=2$, but is larger than $\dim |H|_{\sm}=g$ for every $g\ge3$. Thus a sufficiently general morphism of these ranks would have isolated ramification in genus $2$ and no ramification at all in higher genus. This numerical heuristic might explain the contrast between the $171$ ramification points in degree $2$ (Theorem~\ref{thm:A}) and the absence of ramification for the very general quartic (Theorem~\ref{thm:B}). The analogous numerology for $d\mu_n$ on $|nH|_{\sm}$ is less predictive: it gives expected dimension $2-g_n=1-n^2H^2/2$, which is negative whenever $g_n\ge3$, in particular for every $n\ge2$, despite the non-primitive ramification examples constructed above (Sections~\ref{sec:stability-degree2} and~\ref{sec:stability-degree4}). One could thus hope that for degree $6$ onwards, the primitive class contains no smooth ramification points, but the story is more subtle for multiples.
\end{remark}

\begin{remark}[Quasi-finiteness and the cases treated in this paper]\label{rem:jumping-quasifinite}
If the moduli map is quasi-finite, as it is for a primitive, ample and base-point-free line bundle on a Picard-rank-one $K3$ surface by \cite[Theorem~A]{CG}, then \Cref{prop:jumping-codimension} gives
\[
\codim_{|H|_{\sm}}S_k(H_{\sm})\ge k.
\]
Thus the $171$ reduced ramification points in degree $2$ lie in $S_1(H_{\sm})$, while $S_2(H_{\sm})=\varnothing$ (\Cref{thm:very-general_h2}); for a very general quartic, even $S_1(H_{\sm})$ is empty (\Cref{cor:quartic-unramified}). By contrast, the non-primitive constructions of \Cref{sec:stability-degree2} and \Cref{sec:stability-degree4} produce positive-dimensional ramification: a locus of dimension at least $15$ in $|6H|_{\sm}$ in degree $2$, and, for a quartic, a family parametrised by the $8$-dimensional Grassmannian $\operatorname{Gr}(2,6)$. In both cases $h^0(C,T_S|_C)=1$, so the general constructed curve lies in $S_1\setminus S_2$. In particular, we expect that the expected codimension bounds of the proposition above are weak.
\end{remark}

\begin{remark}[Relation with stability and Harder--Narasimhan strata]\label{rem:jumping-stability}
The filtration by the $S_k(H)$ could be compared with the stratification of $|H|_{\sm}$ by the Harder--Narasimhan type of $T_S|_C$. Since
$\det(T_S|_C)\cong\Oc_C$,
a nonzero section of $T_S|_C$ determines, after saturation, an inclusion
\[
\begin{tikzcd}
\Oc_C(D) \arrow[r, hook] & T_S|_C
\end{tikzcd}
\]
for an effective divisor $D$. If $D>0$, this is a destabilising subbundle. If $D=0$, the bundle can remain semistable but cannot be stable. This separates two geometrically different parts of $S_1(H)$: sections with zeros, which force non-semistability, and nowhere-vanishing sections, which only force failure of stability.

The converse fails in general, as Section~5 illustrates: a destabilising line bundle of positive degree need not have a section. One could refine the jumping loci by fixing the degree and Brill--Noether behaviour of the saturated line subbundle,
\[
S_{k,e}(H):=\{[C]\in |H|_{\sm}:\text{there is }\Oc_C(D)\subset T_S|_C,\ 
                         \deg D=e,\ h^0(C,T_S|_C)\ge k\}.
\]
A relative Harder--Narasimhan construction should make the closures of these loci algebraic after passing to an appropriate relative moduli space of bundles. Comparing their images with $S_k(H)$ could explain which instability phenomena actually cause ramification and which remain invisible to the moduli map, but we do not pursue this here.
\end{remark}

\appendix

\section{Computational verification of the reduced jumping scheme}
\label{app:jumping}\label{app:reduced-jumping}

The following computation supplies the reduced jumping locus used
in \Cref{prop:reduced-example_h2}. The computation shows that the entire projective jumping
locus is contained in the first affine chart and has length $171$.  The
Jacobian criterion then proves that it is reduced. Finally, the binary resultant of the restriction of $F_1$ to the universal line is the degree-$30$
equation for tangency, and its intersection with the jumping ideal is empty.

\begingroup
\scriptsize
\begin{verbatim}
kk = QQ;
R = kk[X,Y,Z];
A = kk[u,v];
B = A[x,y];
needsPackage "Resultants";

F1 = (X^6+Y^6+Z^6+X^5*Y+Y^5*Z+Z^5*X
      +2*X^3*Y^2*Z+3*X^2*Y*Z^3+5*X*Y^3*Z^2);

jumpingIdeal = (F,chart) -> (
    d0 := diff(X,F); d1 := diff(Y,F); d2 := diff(Z,F);
    phi := if chart == 1 then map(B,R,{x,y,u*x+v*y})
           else if chart == 2 then map(B,R,{u*x+v*y,x,y})
           else map(B,R,{x,u*x+v*y,y});
    rows := {x*phi(d0),y*phi(d0),x*phi(d1),
             y*phi(d1),x*phi(d2),y*phi(d2)};
    (mons,coeffsB) := coefficients(matrix{rows},Variables=>{x,y});
    coeffsA := substitute(coeffsB,A);
    minors(6,coeffsA)
    );

-- Smoothness of the sextic.
JF1 = ideal(diff(X,F1),diff(Y,F1),diff(Z,F1));
print("dimension of affine Jacobian scheme: "|toString dim JF1);

-- Jumping ideals on the three standard affine charts.
I1 = jumpingIdeal(F1,1);
I2 = jumpingIdeal(F1,2);
I3 = jumpingIdeal(F1,3);
print("patch 1 (dimension, degree): "|toString(dim I1,degree I1));
print("patch 2 (dimension, degree): "|toString(dim I2,degree I2));
print("patch 3 (dimension, degree): "|toString(dim I3,degree I3));

-- The complement of patch 1 is v=0 in patches 2 and 3.
print("boundary in patch 2 is empty: "|
      toString(dim(I2+ideal(v)) == -1));
print("boundary in patch 3 is empty: "|
      toString(dim(I3+ideal(v)) == -1));

-- Jacobian criterion for the zero-dimensional scheme in patch 1.
SingI1 = I1 + minors(2,jacobian gens I1);
print("jumping scheme is reduced: "|toString(dim SingI1 == -1));

-- The degree-30 equation of the dual curve on patch 1.
phi1 = map(B,R,{x,y,u*x+v*y});
Q1 = phi1(F1);
Delta1 = resultant {diff(x,Q1),diff(y,Q1)};
print("degree of tangency equation: "|toString first degree Delta1);
print("no jumping line is tangent: "|
      toString(dim(I1+ideal(Delta1)) == -1));
\end{verbatim}
\endgroup

\end{document}